\newtheorem{thm}{Theorem}
\newtheorem{cor}{Corollary}
\newtheorem{lem}{Lemma}
\newtheorem{prop}{Proposition}
\newcommand{\E}[1]{\ensuremath{\mathbf{E} \left\{#1 \right \}}}
\newcommand{\Prob}[1]{\ensuremath{\mathbf{P} \left\{#1 \right \}}}
\newcommand{\N}{\mathbb N}
\newcommand{\Z}{\mathbb Z}
\newcommand{\R}{\mathbb R}
\newcommand{\Cex}{\mathcal C_{\text{ex}}}
\newcommand{\cadlag}{c{\`a}dl{\`a}g}
\newcommand*\mystrut[1]{\vrule width0pt height0pt depth#1\relax}
\author[L.~Devroye]{Luc Devroye}
\address{Luc Devroye\\
School of Computer Science\\
McGill University \\ 3480 University Street, H3A 0E9 Montreal, QC, Canada}
\email{lucdevroye@gmail.com}
\author[C.~Holmgren]{Cecilia Holmgren}
\address{Cecilia Holmgren\\
Department of Mathematics, Uppsala University, PO Box 480,
SE-751~06 Uppsala, Sweden} 
\email{cecilia.holmgren@math.uu.se}
\author[H.~Sulzbach]{Henning Sulzbach}
\address{Henning Sulzbach\\
School of Computer Science\\
McGill University \\ 3480 University Street, H3A 0E9 Montreal, QC, Canada}
\email{henning.sulzbach@gmail.com}
\title[The heavy path approach to Galton-Watson trees ]{The heavy path approach to Galton-Watson trees with an application to Apollonian networks}
\keywords{branching processes, fringe trees, spine decomposition, binary tree, continuum random tree, Brownian excursion, Apollonian networks}
\subjclass[1600]{60J80, 60J85, 05C80}      
\begin{document}
\begin{abstract}
We study the heavy path decomposition of conditional Galton-Watson trees. In a standard Galton-Watson tree conditional on its size $n$, we order all children by their subtree sizes, from large (heavy) to small. A node is marked if it is among  the $k$ heaviest nodes among its siblings. Unmarked nodes and their subtrees are removed, leaving only a tree of marked nodes, which we call the $k$-heavy tree. We study various properties of these trees, including their size and the maximal distance from any original node to the $k$-heavy tree. In particular, under some moment condition, the $2$-heavy tree is with high probability larger than $cn$ for some constant $c > 0$, and the maximal distance from the $k$-heavy tree is $O(n^{1/(k+1)})$ in probability.  As a consequence, for uniformly random Apollonian networks of size $n$, the expected size of the longest simple path is $\Omega(n)$.

\end{abstract}

\date{\today}
\maketitle

\section{Introduction}

We study Galton-Watson trees of size $n$. More precisely, we have a basic random variable $\xi$ defined by $$\Prob{\xi = i}  =p_i, \quad 0 \leq i < \infty, $$ where $(p_i)_{i \geq 0}$ is a fixed distribution. Throughout the paper, we assume that
 \begin{align*} & \sum_{i = 0}^\infty p_i = 1, \quad \E{\xi} = 1, \\
 & 0 < \sigma^2 := \E{\xi^2} - 1 < \infty. \end{align*}
The random variable $\xi$ is used to define a critical Galton-Watson process (see, e.g.\ \cite{AN}). In a standard construction, we label the nodes of the Galton-Watson tree in preorder. If $\xi_1, \xi_2, \ldots$ are independent copies of $\xi$, then node $i$ has $\xi_i$ children. Clearly, not all sequences $(\xi_i)_{1 \leq i \leq n}$ correspond to a tree of size $n$, as we must have
\begin{align*}
& (\xi_1 - 1) + \ldots + (\xi_i - 1) \geq 0 \quad \text{for all} \quad 1 \leq i < n, \\
& (\xi_1 - 1) + \ldots + (\xi_n - 1) = -1.
\end{align*}
For a given sequence $\xi_1, \xi_2, \ldots$, we define the size of a tree $\mathcal T$ as 
$$|\mathcal T| = \min \{t \geq 1: 1 +(\xi_1 - 1) + \ldots + (\xi_t - 1) = 0 \}.$$
This is a random Galton-Watson tree. Given $|\mathcal T| = n$, $\mathcal T$ is a conditional Galton-Watson tree. The family of conditional Galton-Watson trees has gained importance in the literature because it encompasses the simply-generated trees introduced by Meir and Moon \cite{meirmoon}, which are basically ordered rooted trees (of a given size) that are uniformly chosen from a class of trees. For example, when $p_0 = p_2 = 1/4, p_1 = 1/2$, the conditional Galton-Watson tree corresponds to a random binary tree of size $n$, also called a Catalan tree. When $(p_i)_{i \geq 0}$ is Poisson$(1)$, then we obtain a random labeled rooted tree, also called a Cayley tree.

\subsection{Encoding ordered rooted trees} \label{sec:encoding}

We consider two encoding functions for Galton-Watson trees of size $n$. Note that these make perfectly sense for any ordered rooted tree with $n$ nodes. For $1 \leq t \leq n$, denote by $d(t)$ the depth of the $t$-th node, where nodes are listed in preorder.
First, we define the \emph{Lukasiewicz path} $(S_i)_{0 \leq i \leq n}$ by $S_0 := 0$ and
$$S_i = (\xi_1 - 1) + \ldots + (\xi_i - 1), \quad 1 \leq i \leq n.$$
Of course, we have $S_n  = -1$ and $S_i \geq 0$ for all $0 \leq i \leq n$. Second, 
the \emph{depth-first process} (or \emph{contour function}) $(D_i)_{0 \leq i \leq 2n-2}$ is defined by $D_i = d(f(i))$, where $(f(i))_{0 \leq i \leq 2n-2}$ denotes the
node visited in the $i$-th step of the depth first traversal with respect to the preorder. Both encodings are extended to continuous functions on the respective intervals on $\R^+_0$ by linear interpolation, see Figure \ref{fig1}  below for an example.

\begin{figure}[htb] 
  \centering 
  \begin{minipage}[t]{.32\hsize} 
\begin{tikzpicture}[level distance=0.75cm,
level 1/.style={sibling distance=1.2cm},
level 2/.style={sibling distance=1cm},
level 3/.style={sibling distance=1cm},
int/.style = {circle, scale = 0.9, draw=black, fill=none, anchor = north, growth parent anchor=south},
ext/.style = {rectangle, scale = 0.7, anchor = north, growth parent anchor=south}]
\tikzstyle{every node}=[circle,draw]
\node (Root) [int] {1}
child { node (A) [int] {2} }
child { node [int] {3}
 child { node [int] {4}}
  }
child { node [int] {5}
  child { node [int] {6}}
   child { node [int] {7}}
 }
  ;
\end{tikzpicture}
\end{minipage}
\begin{minipage}[t]{.32\hsize} 
\begin{tikzpicture}[transform canvas={scale=0.5}, yscale=1.5]
\draw[line width=0.5mm,->] (0,0) -- (7.5,0) node[anchor=north west] {\begin{Huge}$t$ \end{Huge}};
\draw[line width=0.5mm,->] (0,0) -- (0,2.5) node[anchor=south east] {\begin{Huge}$S_t$ \end{Huge}};
\foreach \x in {0,1,2,3,4, 5,6,7}
    \draw[line width=0.5mm] (\x cm,2pt) -- (\x cm,-2pt);
\draw[line width=0.5mm] (1 cm,2pt) -- (1 cm,-2pt) node[anchor=north] {\begin{LARGE}$\mathbf 1$ \end{LARGE}};
\draw[line width=0.5mm] (0 cm,2pt) -- (0 cm,-2pt) node[anchor=north] {\begin{LARGE}$\mathbf 0$ \end{LARGE}};
\draw[line width=0.5mm] (7 cm,2pt) -- (7 cm,-2pt) node[anchor=north] {\begin{LARGE}$\mathbf 7$ \end{LARGE}};
\foreach \y in {0,1,2}
    \draw[line width=0.5mm] (2pt,\y cm) -- (-2pt,\y cm) node[anchor=east] {\begin{LARGE}$\mathbf \y$ \end{LARGE}};
\draw [line width=0.5mm] (0,0) -- (1,2);\draw[line width=0.5mm] (1,2) -- (2,1);\draw[line width=0.5mm] (2,1) -- (3,1); \draw[line width=0.5mm] (3,1) -- (4,0);
\draw[line width=0.5mm] (4,0) -- (5,1); \draw[line width=0.5mm] (5,1) -- (6,0); \draw[line width=0.5mm] (6,0) -- (7,-1);
\end{tikzpicture}
\end{minipage} 
\begin{minipage}[t]{.32\hsize}
\begin{tikzpicture}[transform canvas={scale=0.5}, xscale=0.5, yscale=1.5]
\draw[line width=0.5mm,->] (0,0) -- (13,0) node[anchor=north west] {\begin{Huge}$t$ \end{Huge}};
\draw[line width=0.5mm,->] (0,0) -- (0,2.5) node[anchor=south east] {\begin{Huge}$D_t$ \end{Huge}};
    \draw[line width=0.5mm] (0 cm,2pt) -- (0 cm,-2pt) node[anchor=north] {\begin{LARGE}$\mathbf 0$ \end{LARGE}};
    \draw[line width=0.5mm] (1 cm,2pt) -- (1 cm,-2pt) node[anchor=north] {\begin{LARGE}$\mathbf 1$ \end{LARGE}};
\draw[line width=0.5mm] (12 cm,2pt) -- (12 cm,-2pt) node[anchor=north] {\begin{LARGE}$\mathbf {12}$ \end{LARGE}};
\foreach \x in {0,1,2,3,4, 5,6,7, 8, 9, 10, 11, 12}
    \draw[line width=0.5mm] (\x cm,2pt) -- (\x cm,-2pt) ;
\foreach \y in {0,1,2}
    \draw[line width=0.5mm] (4pt,\y cm) -- (-4pt,\y cm) node[anchor=east] {\begin{LARGE}$\mathbf \y$ \end{LARGE}};
\draw[line width=0.5mm] (0,0) -- (1,1);\draw[line width=0.5mm] (1,1) -- (2,0);\draw[line width=0.5mm] (2,0) -- (3,1); \draw[line width=0.5mm] (3,1) -- (4,2);
\draw[line width=0.5mm] (4,2) -- (5,1); \draw[line width=0.5mm] (5,1) -- (6,0); \draw[line width=0.5mm] (6,0) -- (7,1);
\draw[line width=0.5mm] (7,1) -- (8,2);\draw[line width=0.5mm] (8,2) -- (9,1);\draw[line width=0.5mm] (9,1) -- (10,2);
\draw[line width=0.5mm] (10,2) -- (11,1); \draw[line width=0.5mm] (11,1) -- (12,0);
\end{tikzpicture}
\end{minipage}
\caption{A finite rooted tree of size $7$ with labels given by the preorder. Second and third picture show the corresponding Lukasiewicz path and depth-first-search process.}
\label{fig1}
\end{figure}
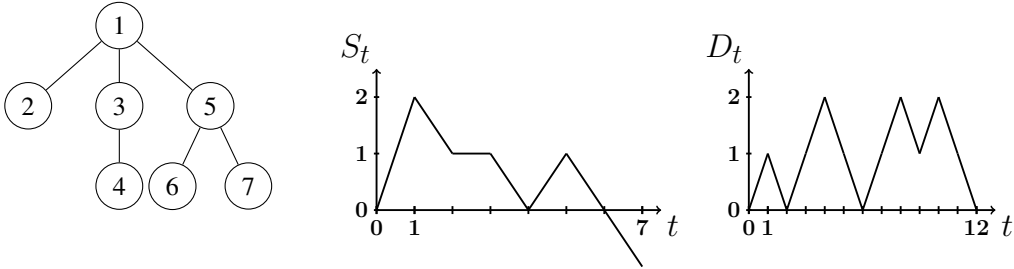

\subsection{The local and the global picture of Galton-Watson trees} \label{sec:twopic}
The history of conditional Galton-Watson trees is quite rich. Two results stand out that encapsulate our understanding:

\begin{itemize}
\item [(i)] The process 
$\left( \frac{\sigma D_{t (2n-2)}}{2 \sqrt{n}} \right)_{0 \leq t \leq 1}$ tends in distribution to a standard Brownian excursion, and the conditional tree tends in some sense to the so-called continuum random tree. This celebrated result goes back to a series of papers by Aldous \cite{al1, al2, al3}. See also Le Gall \cite{legallrandomtrees} and Marckert and Mokkadem \cite{marmok} for a discussion of convergence of both encoding processes. This implies that the height $H_n$ of the conditional Galton-Watson tree, where $H_n = \max_{1 \leq t \leq n}d(t)$, satisfies  $$\frac{\sigma H_n}{\sqrt{2n}} \stackrel{d}{\longrightarrow} H_\infty,$$ where $H_\infty$ has the theta distribution and $\stackrel{d}{\longrightarrow}$ denotes convergence in distribution. That is,  \begin{align} \label{convH} \lim_{n \to \infty} \Prob{\frac{\sigma H_n}{\sqrt{2 n}} \leq x} = \sum_{j = -\infty}^\infty (1-2j^2 x^2)\exp(-j^2 x^2), \quad x > 0. \end{align}
In this generality, this limit theorem goes back to Kolchin \cite[Theorem 2.4.3]{kolchin86}. In the case of Cayley trees, \eqref{convH} had already been discovered by R{\'e}nyi and Szerekes \cite{rensze67} and for full binary trees, that is $p_0=p_2 = 1/2$, by Flajolet and Odlyzko \cite{flod}. 

Moreover, there are universal upper bounds that will be useful for this paper: there exists $\delta \in (0, \sigma^2/2]$, such that
\begin{align} \label{boundh} \sup_{n \geq 1} \Prob{\frac{H_n}{\sqrt{n}} \geq x}  \leq \exp(-\delta x^2), \quad x > 0. \end{align}
This is Theorem 1.2 in Addario-Berry, Devroye and Janson \cite{adddevjan13}.
\item [(ii)] As $n$ grows large, $\mathcal T$ can be thought of as a long spine with offspring defined as follows: First construct an infinite sequence $\zeta_1, \zeta_2, \ldots$ drawn from the distribution $(i p_i)_{i \geq 0}$, also called the size-biased distribution. Associate $\zeta_i$ with the $i$-th node on an infinite path. To every node $i$ on the path assign $(\zeta_i - 1)$ children off the path, and make each child the root of an independent (unconditional) Galton-Watson tree. Finally permute all children of every node on the infinite spine. This infinite so-called \emph{size-biased Galton-Watson tree} is the scaling limit of conditional Galton-Watson trees as $n \to \infty$ in a much different sense than in (i). The decomposition is called the spine decomposition. The construction goes back to Kesten \cite{kesten86}. Compare also Lyons, Pemantle and Peres \cite{lpp}, Aldous and Pitman \cite[Section 2.5]{alpi98} and Janson \cite[Section 7]{janson2012a}.
\end{itemize}

Let us stress that the two pictures drawn focus on very different aspects of the  trees. Aldous' theory leading to the continuum random tree describes the global structure and  is useful in the analysis of the tree height, diameter or the depth of a uniformly chosen node. 
The convergence result constitutes an invariance principle: in analogy to the central limit theorem for independent and identically distributed summands, it only relies on the second moment of the offspring distribution. This also means that more local information, that is,  quantities which scale on order smaller than $\sqrt n$, cannot be studied by this method.

The picture drawn in (ii) is local: the conditional Galton-Watson tree converges locally, in the sense of Aldous-Steele \cite{ob} (sometimes also referred to as Benjamini-Schramm convergence \cite{benjschramm}), to the infinite size-biased tree. To be more precise, it states that, for any fixed $k \geq 1$, the conditional Galton-Watson tree restricted to nodes of distance at most $k$ from the root converges as $n \to \infty$ in distribution to the restricted object sampled from the infinite size-biased tree. 

\medskip The present paper looks at a less natural decomposition of the conditional Galton-Watson tree,  but one that has far-reaching applications in computer science and the study of random networks, more precisely, random Apollonian networks. 

\subsection{Heavy subtrees and main results} \label{sec:mainresults} One can reorder all sets of siblings by subtree size, from large to small, where ties are broken by considering the preorder index. For a node $v$ in the (conditional or not) Galton-Watson tree $\mathcal T$, we denote by $\rho_v$ the rank in its ordering (for example, $\rho_v = 1$ means that $v$ has the largest subtree among its siblings). Let $A_v = (v_1, \ldots, v_{d-1} = v)$ be the sequence of ancestors of $v$, starting at the root and ending at $v$ if $v$ is at distance $d-1$ from the root. We define the maximal rank $$\rho_v^* = \max (\rho_{v_1}, \ldots, \rho_{v_{d-1}}).$$ No rank is defined for the root.  For fixed integer $k$, we define
the \emph{$k$-heavy Galton-Watson tree}  as the tree formed by the root and $\{v \in \mathcal  T : \rho_v^* \leq k\}$, where $\mathcal T$ is the conditional Galton-Watson tree. The $k$-heavy tree has nodes of degree $k$ or less. For $k = 1$, we obtain a path, which we call the \emph{heavy path}---just follow the path from the root down, always going to the largest subtree. It is interesting that the length $L_n$ of the heavy path has a different asymptotic distributional behaviour than $H_n$.  Clearly, $L_n \leq H_n$, but $L_n$ is neither too small nor too close to $H_n$. In Section \ref{sec:heavypath} we discuss distributional convergence of $L_n / \sqrt{n}$ and study the tail behaviour of the random variable $L_n / \sqrt{n}$ near $0$ in more detail. We note that it grows more slowly than any polynomial but much faster than the theta law (see \eqref{convH}). As opposed to the $k$-heavy trees for $k \geq 2$, the heavy path can be studied using the global picture (i) sketched above, and its scaling limit has a representation in terms of a Brownian excursion (or the continuum random tree).

\medskip Our main interest, though, is the study of the case $k = 2$, the $2$-heavy Galton Watson tree. In Section \ref{sec:2ary}, we show that it captures a huge chunk of the Galton-Watson tree: by Theorem \ref{thm:low2ary}, if $\E{\xi^5} < \infty$, then there exists a constant $c > 0$ such that
\begin{align} \label{bin} \lim_{n \to \infty} \Prob{ \text{Size of the} \ 2  \text{-heavy tree} \geq c n} = 1. \end{align}
Since the number of nodes of degree $i$ in a conditional Galton-Watson tree is in probability asymptotic to $n p_i$, it is easy to see that the size of the $2$-heavy tree cannot be more than $$n \left(1 - \sum_{i \geq 3} (i-2) p_i + o(1)\right),$$
so that there is no hope of replacing $cn$ by $n - o(n)$ in \eqref{bin}. In fact, we believe that the size of the $2$-heavy tree satisfies a law of large numbers when rescaled by $n^{-1}$ as $n \to \infty$ with a limiting constant depending on the distribution of $\xi$.

Finally, we also study the maximal distance to the $k$-heavy trees. For a proper set of nodes, $A \subseteq \{1, \ldots, n\}$, we call the maximal distance to $A$
$$\max_{v \notin A} \min _{w \in A} \text{dist}(v,w), $$
where $\text{dist}(\cdot, \cdot)$ refers to path distance. The maximal distance to the $k$-heavy tree measures to some extent how pervasive the $k$-heavy tree is. In Section \ref{sec:distances}, we show that, under appropriate moment conditions on $\xi$, the distance is in probability $\Theta(n^{1/(k+1)})$. In fact, we also show that this is optimal in the sense that, every $k$-ary subtree leaves out nodes of distance order $n^{1/(k+1)}$ away.

\subsection{Apollonian networks} In 1930, Birkhoff \cite{birk} introduced a model of a planar graph that became known as an Apollonian network, a name coined by Andrade et al.\ \cite{andrade} in 2005. Suggested as toy models of social and physical networks with remarkable properties, they are recursively defined by starting with three vertices that form a triangle in the plane. Given a collection of triangles in a triangulation, choose one (either at random, or following an algorithm), place a new vertex in its center, and connect it with the three vertices of the triangle. So, in each step, we create three new edges, one new point, and three new triangles (which replace an old one). After $n$ steps, we have $3 + n$ vertices, and $3 + 3n$ edges in the graph. This is an Apollonian network. One can also define a dual tree: start with the original triangle as the root of a tree. In a typical step, select a leaf node of the tree (which corresponds to a triangle) and attach to it three children. This tree has a one-to-one relationship with the Apollonian network.  It has $1 + 2n$ leaves (after $n$ steps) and $1 + 3n$ vertices. See Figure \ref{fig:apo} for an illustration. 

        \begin{figure}[h]  \centering
\includegraphics[width=.6\columnwidth]{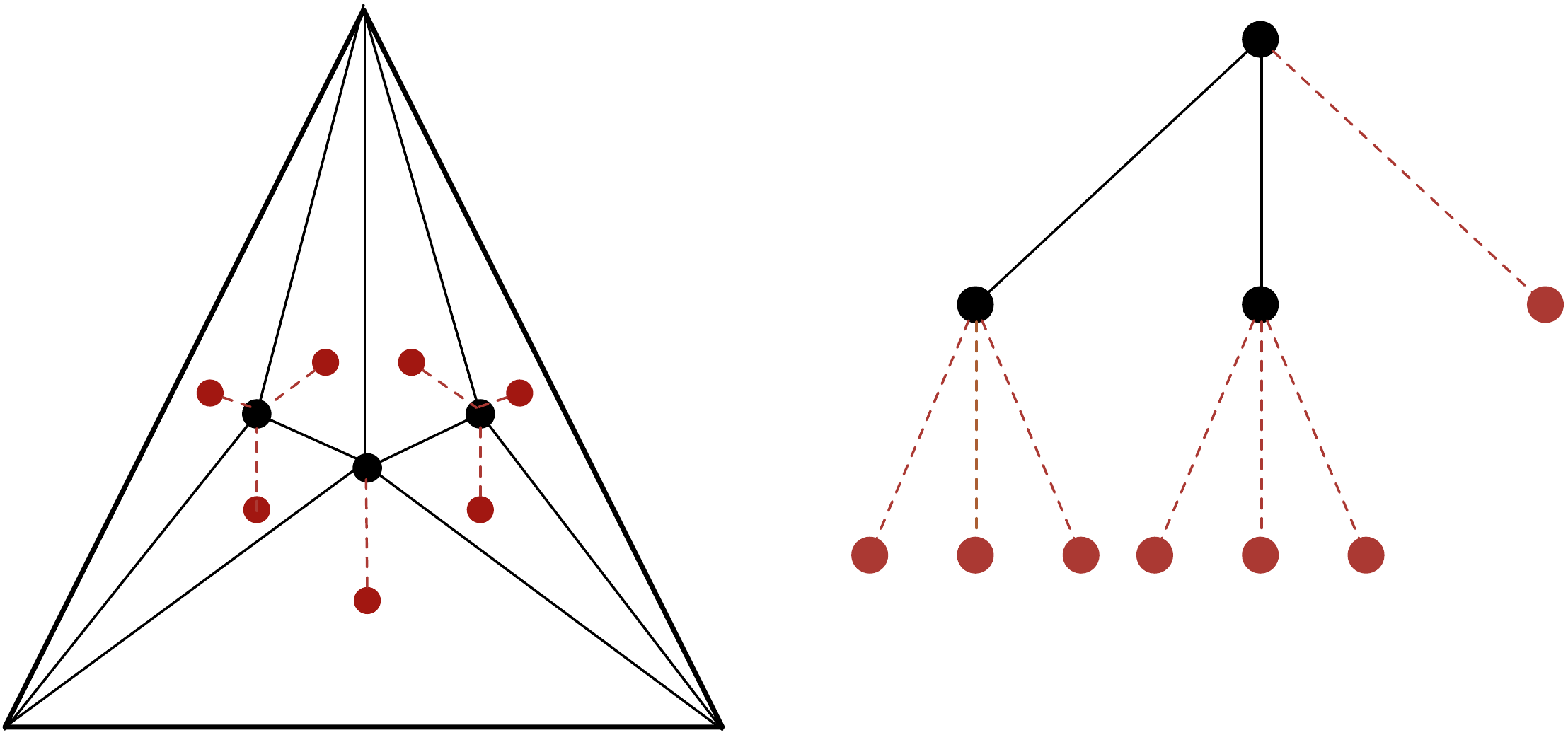} 
\caption{Apollonian network of size 3 with evolutionary tree. Leaves are drawn in red. }  \label{fig:apo} \end{figure}

A frequently studied (see Zhou, Yan and Wang \cite{zhou}) random Apollonian network is one in which each triangle (in the network)---or, equivalently, each leaf in the tree---is chosen uniformly at random for splitting, leading to a so-called split tree \cite{dev_split}. (More precisely, we obtain a random ternary increasing tree, a variant of the much studied random binary search tree.) Its height is bounded almost surely by $c \log n$ for a suitable constant $c > 0$ \cite{nic_dev}. More importantly, one is interested in the longest simple path in the Apollonian network. (A simple path in a graph is a path which visits every vertex at most once.) Calling its length $\mathcal L_n$, its asymptotic behaviour is still not well understood today. 
Takeo \cite{takeo} erroneously claimed that Apollonian networks have a Hamiltonian cycle (and thus, $\mathcal L_n = n -1$), but the so-called Goldner-Harary graphs invented by Gr{\"u}nbaum in 1967 \cite{grun} form just one of many possible counterexamples. Frieze and Tsourakakis \cite{frt} conjectured that for the random Apollonian network of Zhou, Yan and Wang,  $\mathcal L_n \geq cn$ for some constant $c > 0$ with probability tending to one. This was disproved by Ebrahimzadeh et al. \cite{ebra1} who showed that, with high probability, $\mathcal L_n = o(n)$. They also provided a lower bound of $\Omega(n^{0.88})$ for $\E{\mathcal L_n}$. 
Very recently,  Collevecchio, Mehrabian and Wormald \cite{cmw} proved that $\mathcal L_n$ is with high probability at most 
$n^{1-\varepsilon}$ where $\varepsilon$ can be chosen $4 \times 10^{-8}$.

If the random model is changed, and we generate a random ordered tree of size $1 + 3n$ in which each non-leaf node has three children, such that all trees are equally likely, then this corresponds to a conditional Galton-Watson tree (of size $1 + 3n$) with $p_0 = 2/3, p_1 = p_2 = 0$ and $p_3 = 1/3$. Furthermore, it is easy to verify that the length of the longest simple path $\mathcal L_n$ is bounded from below by the size of any binary subtree embedded in the Galton-Watson tree. In particular, it is larger than the size of the $2$-heavy tree. Therefore, there exists $c > 0$ such that \begin{align} \label{conjfrieze} \lim_{n \to \infty} \Prob{\mathcal L_n \geq cn} = 1.\end{align}
Thus, for this random model, Frieze and Tsourakakis' conjecture is easily settled by studying the $2$-heavy tree. This was the initial motivation of the present paper.

\subsection{Notation} Throughout the paper, we use  $h = \gcd \{ i : p_i > 0, i > 0\},$ $\alpha = h / (\sigma  \sqrt{2\pi})$,  $I = \{n \geq 1: \Prob{S_n = -1} >0 \}$, and, for $n \in I$, 
$I_n = \{1 \leq k \leq n: \Prob{S_{n-k} = 0} >0 \}$.
From B{\'e}zout's lemma, it follows that $I = (\N h  + 1)\backslash A$ for some finite set $A \subseteq \N$. 
In the remainder of the paper, we write $\mathcal T$ for a realization of the unconditional Galton-Watson tree and $\tau_n, n \in I,$ for $\mathcal T$ conditional on having size $n$. ($\mathcal T$ and $\mathcal \tau_n$ are considered as graphs, where $\tau_n$ has vertex set $[n] := \{1, \ldots, n\}$.) We introduce the following terminology: for $v \in [n]$, let
 $\xi(v)$ be the number of children of $v$,
$N(v)$ be the size of the subtree rooted at $v$,
$H(v)$ be the height of the subtree rooted at $v$,
$N_i(v)$ be the size of the $i$-th largest subtree rooted at $v$, abbreviating $N_i(v) = 0$ if $i > \xi(v)$, and 
 $N_{i+}(v) = N_i(v) + N_{i+1}(v) + \ldots$
We use the notation $\xi_\epsilon, N, H, N_i$ and  $N_{i+}$ when referring to the root node. Finally,  for $k \geq 1$, let $Z_k = | \{v \in [n]: N(v) = k\}|$.  We stress that, in order to increase readability, we often omit to indicate the parameter $n$ in the notation.

In Sections \ref{sec:fringe} -- \ref{apa}, Appendices A and B, all constants except $c, c_1, c_2, \ldots, C, C_1, C_2, \ldots$ carry fixed values. The values of constants used multiple times may vary between two results or proofs but not within. Here, constants $C, C_1, C_2, \ldots > 0$ are meant to carry \emph{large} values, whereas $c, c_1, c_2, \ldots > 0$ are typically \emph{small}. Appendix D can be read independently of the remainder of the work.

\section{Preliminary results and fringe trees} \label{sec:fringe}
Let us start by recovering some classical results which have proved fruitful in the analysis of conditional Galton-Watson trees. 
Recall the following well-known identity going back to Dwass \cite{dwass69} (compare also Janson \cite[Theorem 15.5]{janson2012a} and the discussion therein),
\begin{align} \label{ident1}
\Prob{|\mathcal T| = n} = \frac{\Prob{S_n = -1}}{n}.
\end{align}
More generally, for independent copies  $\mathcal T_1, \mathcal T_2, \ldots$ of $\mathcal T$,
\begin{align} \label{ident22}\Prob{|\mathcal T_1| + \ldots + |\mathcal T_k| = n} = \frac k n \Prob{S_n = -k}. \end{align}
In this context, we cite a classical result for sums of independent integer random variables applied to the sequence $S_n$. By Petrov \cite[Theorem VII.1]{petrov75} or Kolchin \cite[Theorem 1.4.2] {kolchin86}, as $n \to \infty$, 
\begin{align} \label{LLTgen} \sup_{x \in \Z h - n} \left|\Prob{ S_n = x} - \frac{\alpha}{\sqrt{n}} \exp \left( - \frac{x^2}{2 \sigma^2 n}\right) \right| = o(n^{-1/2}). \end{align}
In particular, for $x = o(\sqrt{n})$ with $x \in \Z h - n $, as $n \to \infty$, 
\begin{align*}  \Prob{S_n = x} \sim \frac{\alpha}{ \sqrt{n }}. \end{align*}
Similarly, as $n \to \infty, n \in \N h + 1$, 
\begin{align} \label{LLT}
\Prob{S_n = -1} \sim \frac{\alpha}{ \sqrt{n }}.
\end{align}
By summation, using \eqref{ident1} and \eqref{LLT}, as $t \to \infty$,
\begin{align} \label{tailT}
\Prob{|\mathcal T| \geq t} \sim \frac{2 \alpha}{h \sqrt{t}}.
\end{align}

The study of the sequence $Z_k, k \geq 1$ is closely related to the analysis of a random fringe subtree $\tau^*_n$, a subtree of $\tau_n$ rooted at a uniformly chosen node. The study of fringe subtrees was initiated by Aldous \cite{aldous91}, who showed that, under our conditions,
\begin{align} \label{aldousfringe} \tau^*_n \stackrel{d}{\longrightarrow} \mathcal T. \end{align}
In particular, $\E{Z_k}/n \to \Prob{|\mathcal T| = k}$ as $n \to \infty, n \in \N h +1$ for $k \in I$ fixed. Bennies and Kersting \cite[Theorem 1]{benker00} generalized \eqref{aldousfringe} to offspring distributions with infinite variance. Janson \cite[Theorem 7.12]{janson2012a} obtained a quenched version of the result stating that, conditional on the tree $\tau_n$, the (random) distribution of $\tau_n^*$ converges in probability to the (deterministic) distribution of $\mathcal T$.
More recently, Janson \cite[Theorem 1.5]{janson2013} obtained finer results on subtree counts in $\tau_n$, in particular estimates and asymptotic expansions for the variance and a central limit theorem. We summarize special cases of his results in the following proposition. The exact expressions for mean and variance are contained in \cite[Lemma 5.1]{janson2013} and \cite[Lemma 6.1]{janson2013}. The uniform estimate on the variance \eqref{variance} follows from \cite[Theorem 6.7]{janson2013}.

\begin{prop}[Janson \cite{janson2013}]
Let $n \in I$ and $1 \leq k \leq n$. Then, 
\begin{align} \label{exactmean}
\E{Z_k} = \frac{n \Prob{S_k = -1} \Prob{S_{n-k} = 0}}{k \Prob{S_n = -1}}, 
\end{align}
and, for $1 \leq k \leq (n-1)/2$, 
\begin{align*} \E{Z_k (Z_k - 1)} & = \frac{n(n-2k+1)\Prob{S_k = -1}^2 \Prob{S_{n-2k} = 1}}{k^2 \Prob{S_n = -1}},  
\end{align*}
while $\E{Z_k (Z_k - 1)} = 0$ for $k > (n-1)/2$.
For fixed $k \in I$, as $n \to \infty, n \in \N h +1$, 
\begin{align*}  \frac{\emph{Var} (Z_k)}{n} \to \theta^2, \quad  \theta^2 = \Prob{|\mathcal T| = k} \left[1 + \Prob{|\mathcal T| = k} (1-2k - \sigma^{-2}) \right] > 0. \end{align*}
and,  
\begin{align*} \frac{Z_k - n \Prob{|\mathcal T| = k}}{\sqrt{n}} \stackrel{d}{\longrightarrow} \mathcal N(0, \theta^2),
\end{align*}
where $ \mathcal N(0, \theta^2)$ denotes a normal random variable with variance $\theta^2$ and mean $0$.

Finally, uniformly in $1 \leq k \leq n$, as $n \to \infty, n \in \N h +1$, 
\begin{align} 
\label{variance}
\emph{Var}(Z_k) = O\left(n\right).
\end{align}
\end{prop}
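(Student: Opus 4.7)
\emph{Exact moments.} The proposition contains five claims, and I would treat the first two (exact first and second factorial moments) as combinatorial identities from which the latter three (asymptotic variance, CLT, uniform variance bound) follow analytically. For the first moment, write $Z_k=\sum_{v=1}^{n}\mathbf{1}\{N(v)=k\}$, so that
\begin{equation*}
\E{Z_k\,\mathbf{1}_{\{|\mathcal T|=n\}}}=\sum_{v=1}^{n-k+1}\Prob{N(v)=k,\,|\mathcal T|=n}.
\end{equation*}
For each preorder position $v$, the increments $\xi_1,\ldots,\xi_n$ split into three independent blocks: a prefix of length $v-1$, a ``subtree excursion'' of length $k$ which itself encodes a Galton--Watson tree of size $k$ (contributing $\Prob{|\mathcal T|=k}=\Prob{S_k=-1}/k$), and a suffix of length $n-v-k+1$. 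Deleting the excursion and raising the suffix by $1$ to absorb its $-1$ drop re-glues prefix and suffix into a walk of length $n-k$ from $0$ to $0$; a cycle-lemma-type argument handles the non-negativity constraint and the sum over $v$ collapses to $\Prob{S_{n-k}=0}$. Hence $\E{Z_k\,\mathbf{1}_{\{|\mathcal T|=n\}}}=\Prob{|\mathcal T|=k}\,\Prob{S_{n-k}=0}$, and dividing by $\Prob{|\mathcal T|=n}=\Prob{S_n=-1}/n$ gives \eqref{exactmean}. The second factorial moment is obtained by the analogous cut-and-paste with \emph{two} excursions; since two distinct nodes of equal subtree size cannot be nested, the excursions must be disjoint, forcing $2k\leq n-1$ and explaining the vanishing for $k>(n-1)/2$. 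In the non-trivial range, the three surviving pieces glue into a walk of length $n-2k$ from $0$ to $+1$, and Kemperman's formula applied to this walk, together with the two factors $\Prob{S_k=-1}/k$ from the excursions and the normalization by $\Prob{S_n=-1}/n$, produces the prefactor $n(n-2k+1)/k^2$.

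\emph{Variance and CLT at fixed $k$.} I would substitute the exact formulas into $\mathrm{Var}(Z_k)=\E{Z_k(Z_k-1)}+\E{Z_k}-\E{Z_k}^2$ and apply \eqref{LLTgen} to each of $\Prob{S_n=-1},\Prob{S_{n-k}=0},\Prob{S_{n-2k}=1}$, keeping the Gaussian factor $\exp(-x^2/(2\sigma^2 m))$ to second order. The leading $O(n^2)$ contributions of $\E{Z_k(Z_k-1)}$ and $\E{Z_k}^2$ cancel exactly, and the residual is $\theta^2 n+o(n)$; the $-\sigma^{-2}$ term inside $\theta^2$ arises precisely from the Gaussian exponential. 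The CLT would then follow by the method of moments: the cut-and-paste extends to $r$ disjoint excursions and yields $r$-th factorial moments asymptotic to $(n\Prob{|\mathcal T|=k})^r$ for every $r\geq 1$, which together with the variance limit matches all centered moments of $(Z_k-n\Prob{|\mathcal T|=k})/\sqrt{n}$ with those of $\mathcal N(0,\theta^2)$.

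\emph{Uniform variance bound, the main obstacle.} For $k>(n-1)/2$ the second factorial moment vanishes, so $\mathrm{Var}(Z_k)\leq\E{Z_k}=O(1)$ is trivial. For smaller $k$, both $\E{Z_k(Z_k-1)}$ and $\E{Z_k}^2$ are of order $n^2\Prob{|\mathcal T|=k}^2$, which is generally not $O(n)$; \eqref{variance} rests on a \emph{uniform} cancellation between them. The plan is to combine a two-term Edgeworth-type refinement of \eqref{LLTgen} with the bound $\Prob{|\mathcal T|=k}=\Prob{S_k=-1}/k=O(k^{-3/2})$ (from the local limit theorem), handling $k\leq n^{1/3}$ (where cancellation between the two leading terms is essential to reach $O(n)$) and $k\geq n^{1/3}$ (where the uniform estimate $\Prob{S_m=x}=O(m^{-1/2})$ already forces $n^2\Prob{|\mathcal T|=k}^2=O(n)$) separately. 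Tracking the cancellation uniformly in $k$, particularly at the transition between regimes, is the delicate technical step and is the main obstacle in this program; it is carried out in detail in \cite{janson2013}.
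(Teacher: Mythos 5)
The paper does not prove this proposition; it is a direct citation of Janson \cite{janson2013} (the text explicitly points to Lemmas 5.1, 6.1 and Theorem 6.7 there). Your cut-and-paste derivation of the exact factorial moments is faithful in spirit to Janson's Lemma 5.1 and its two-excursion analogue, and your explanation of the vanishing for $k>(n-1)/2$ is exactly right.

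The gap is in the claim that you can obtain $\mathrm{Var}(Z_k)/n \to \theta^2$ by substituting \eqref{LLTgen} into $\E{Z_k(Z_k-1)}+\E{Z_k}-\E{Z_k}^2$ and ``keeping the Gaussian factor to second order.'' Both $\E{Z_k(Z_k-1)}$ and $\E{Z_k}^2$ are $\Theta(n^2)$, so establishing that their difference is $O(n)$ requires relative precision $O(1/n)$ in $\Prob{S_{n-2k}=1},\Prob{S_{n-k}=0},\Prob{S_n=-1}$. But \eqref{LLTgen} gives only absolute error $o(m^{-1/2})$, i.e.\ relative error $o(1)$ since the probabilities are themselves $\Theta(m^{-1/2})$; after substitution these uncontrolled $o(1)$ terms contribute $o(n^2)$ to $\E{Z_k(Z_k-1)}-\E{Z_k}^2$, which swamps the claimed $\Theta(n)$ answer. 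The formal expansion of the Gaussian exponentials does produce $1-2k-\sigma^{-2}$, but \eqref{LLTgen} is not precise enough to validate it. This is exactly the subtlety the paper flags in Section 5.2: Janson writes $\mathrm{Cov}(Z_k,Z_m)=I_1+I_2+I_3$, isolates the dangerous part in $I_2$ as the difference $\frac{\Prob{S_{n-k-m}=1}}{\Prob{S_n=-1}}-\frac{\Prob{S_{n-k}=0}}{\Prob{S_n=-1}}\frac{\Prob{S_{n-m}=0}}{\Prob{S_n=-1}}$, and proves the required cancellation in this specific quantity directly in his Lemma 6.2, \emph{before} applying any local limit theorem. The same concern undermines your proposal for \eqref{variance}: an Edgeworth-type refinement of the needed order is not available under a finite second moment alone (Janson's hypothesis), so it cannot be the mechanism, and the method-of-moments CLT would require the analogous uniform cancellations for all higher factorial moments.
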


It follows from \eqref{LLT} that, as $n \to \infty$ and $k = o(n)$ with $n \in \N h + 1$, $k \in I_n \cap I$, in probability,
\begin{align*}  
\frac{Z_k}{\E{Z_k}} \to 1. \end{align*}
Furthermore, if additionally $k \to \infty$, then
\begin{align} \label{asy:mean} \E{Z_k} \sim  \frac{\alpha n}{ k ^{3/2}}. \end{align}

Many arguments in this manuscript rely on bounds on the mean such as those given below.
\begin{cor} \label{bounds:mean}
There exists a constant $n_0 \geq 1$ such that, for all $n \geq n_0$, $n \in I$, $k \in I_n$, 
\begin{align*} \E{Z_k} \leq \begin{cases} 4 \sqrt{2} \alpha (n-k)^{-1/2}  &\mbox{for } n/2 \leq k \leq n - n_0, \\ 
2 \sqrt{2} \alpha n k^{-3/2} & \mbox{for } n_0 \leq k \leq n/2, \\ \lfloor n/k \rfloor & \mbox{for } 1 \leq k \leq n. \end{cases} \end{align*}
Similarly, there  exist constants $n_1 \geq 1$ and $ \varsigma >0$, such that, for all $n \geq n_1$, $n \in I$, $k \in I_n \cap I$, 
\begin{align*} \E{Z_k} \geq \begin{cases} 
\alpha n k^{-3/2}/2 & \mbox{for } n_1 \leq k \leq n/2, \\   \varsigma n & \mbox{for } 1 \leq k \leq n_1. \end{cases} \end{align*}
\end{cor}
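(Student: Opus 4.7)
The plan is to derive all six estimates directly from the exact mean formula \eqref{exactmean}, combined with the local limit theorem \eqref{LLTgen} and its corollary \eqref{LLT}, with one exception: the bound $\E{Z_k} \leq \lfloor n/k \rfloor$ is \emph{deterministic}. Indeed, two fringe subtrees of identical size $k$ cannot be nested (the outer one would be strictly larger), so they are vertex-disjoint. Hence $k Z_k \leq n$ pointwise, and the third upper bound follows by taking expectations.

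For the remaining bounds, fix a small $\epsilon \in (0,1)$; for concreteness $\epsilon = 1/5$ will accommodate all the asserted constants. By \eqref{LLTgen} and \eqref{LLT}, there is $m_0 = m_0(\epsilon)$ such that whenever $m \geq m_0$ lies in the appropriate residue class modulo $h$,
\[
(1-\epsilon)\frac{\alpha}{\sqrt{m}} \;\leq\; \Prob{S_m = 0}, \; \Prob{S_m = -1} \;\leq\; (1+\epsilon)\frac{\alpha}{\sqrt{m}}.
\]
Substituting in \eqref{exactmean} yields, whenever each of $k$, $n-k$, $n$ is at least $m_0$,
\[
\frac{(1-\epsilon)^2}{1+\epsilon}\cdot\frac{\alpha\, n^{3/2}}{k^{3/2}\sqrt{n-k}} \;\leq\; \E{Z_k} \;\leq\; \frac{(1+\epsilon)^2}{1-\epsilon}\cdot\frac{\alpha\, n^{3/2}}{k^{3/2}\sqrt{n-k}}.
\]

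From this two-sided estimate, I would obtain the two non-trivial upper bounds by taking $n_0 = 2m_0$ and splitting: in the regime $n/2 \leq k \leq n-n_0$ use $n^{3/2}/k^{3/2} \leq 2\sqrt{2}$, and in the regime $n_0 \leq k \leq n/2$ use $n^{3/2}/\sqrt{n-k} \leq \sqrt{2}\,n$. With $\epsilon = 1/5$ one checks $(1+\epsilon)^2/(1-\epsilon) = 9/5 < 2$, which turns the constants $2\sqrt{2}\alpha$ and $\sqrt{2}\alpha$ into the asserted $4\sqrt{2}\alpha$ and $2\sqrt{2}\alpha$. The lower bound for $n_1 \leq k \leq n/2$ is obtained symmetrically using $\sqrt{n-k} \leq \sqrt{n}$ together with $(1-\epsilon)^2/(1+\epsilon) = 8/15 > 1/2$ when $\epsilon = 1/5$.

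For the last lower bound ($1 \leq k \leq n_1$ with $k \in I_n \cap I$), I would argue separately. By \eqref{ident1} we have $\Prob{S_k = -1}/k = \Prob{|\mathcal T| = k}$, and \eqref{LLTgen}--\eqref{LLT} give $\Prob{S_{n-k} = 0}/\Prob{S_n = -1} \to 1$ as $n \to \infty$ through $\N h+1$, for each fixed $k$. Thus $\E{Z_k}/n \to \Prob{|\mathcal T| = k}$. Since $\Prob{|\mathcal T|=k} > 0$ for every $k \in I$ and the index set $\{1,\ldots,n_1\}\cap I$ is finite, the convergence is uniform in $k$; taking $\varsigma$ to be half the minimum and enlarging $n_1$ if necessary completes the estimate. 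The only point demanding care is the bookkeeping around the lattice hypotheses $n \in I$, $k \in I$, $k \in I_n$ so that each of $\Prob{S_n=-1}$, $\Prob{S_k=-1}$, $\Prob{S_{n-k}=0}$ falls within the scope of \eqref{LLTgen}; this is the only mildly delicate aspect, as the rest reduces to elementary manipulation.
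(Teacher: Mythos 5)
Your proposal is correct and follows essentially the same approach as the paper: apply the local limit theorem (\eqref{LLTgen}, \eqref{LLT}) to the exact mean formula \eqref{exactmean} to get a two-sided estimate $\E{Z_k} \asymp \alpha n^{3/2} k^{-3/2}(n-k)^{-1/2}$, absorb the LLT error into a constant factor, split into regimes at $n/2$, handle $1 \leq k \leq n$ by the deterministic bound $Z_k \leq \lfloor n/k \rfloor$, and handle small $k$ separately via $\E{Z_k}/n \to \Prob{|\mathcal T| = k}$ and finiteness of the index set. The only cosmetic difference is that the paper absorbs the LLT error into a single factor of $2$ in the intermediate bound $\E{Z_k}/n \leq 2\alpha k^{-3/2}\sqrt{n/(n-k)}$ rather than tracking $\epsilon$ explicitly; your explicit $\epsilon = 1/5$ bookkeeping yields the same constants and is valid.
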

\begin{proof}
By an application of \eqref{LLT} to \eqref{exactmean}, there exists $n_0 \geq 1$, such that, for all $n_0 \leq k \leq n-n_0$, 
\begin{align*}
\frac{\E{Z_k}}{n} & \leq 2 \alpha k ^{-3/2} \sqrt{\frac{n}{n-k}} \leq \begin{cases}  2 \sqrt{2} \alpha k ^{-3/2}  &\mbox{for } n_0 \leq k \leq n/2, \\ 
 \frac{4 \sqrt{2} \alpha }{n \sqrt{n-k}} & \mbox{for } n/2 \leq i \leq n-n_0. \end{cases} \end{align*}
This shows the first two upper bounds. The third follows immediately from the deterministic bound $Z_k \leq \lfloor n / k \rfloor$. The first lower bound follows analogously. The second lower bound follows from \eqref{ident1} and \eqref{asy:mean}, since, for $k \in I$, we have $\E{Z_k} / n \to \Prob{|\mathcal T| = k} = \Prob{S_k = -1} / k$. 
\end{proof}
\begin{cor} \label{logbound}
There exists a universal constant $C > 0$ such that, for all $M \geq n_0$ and $n \geq M, n \in I,$ with $n_0$ as in Corollary \ref{bounds:mean}, we have
\begin{align*}
\sum_{k=M}^n \E{Z_k} \log k \leq C n \frac{\log M}{\sqrt{M}} + (2 + 4 \sqrt{2} \alpha)  n ^{3/4} \log n.
\end{align*}
\end{cor}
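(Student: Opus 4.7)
The plan is to split the sum at $k = \lfloor n/2 \rfloor$ and $k = n - \lceil n^{3/4} \rceil$ and apply the three bounds of Corollary \ref{bounds:mean} in order, in the three resulting ranges. The second cut-off is the only real design choice: it is fixed at $n^{3/4}$ because the natural bounds in the two upper ranges scale like $\sqrt{n-k}$ and like $\lfloor n/k\rfloor = 1$ per summand respectively, and balancing these as a function of the cut-off $n - g(n)$ gives $\sqrt{g(n)} \asymp g(n)$, i.e.\ $g(n) \asymp \sqrt{n}$ when we want to minimise their sum, but the target exponent comes from choosing $g(n)=n^{3/4}$ so that the deterministic piece already has the correct asymptotics and the middle piece is of lower order.

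For the low range $M \leq k \leq \lfloor n/2 \rfloor$, I would apply $\E{Z_k} \leq 2\sqrt{2}\alpha n k^{-3/2}$ together with the standard estimate
\begin{equation*}
\sum_{k \geq M} \frac{\log k}{k^{3/2}} \;=\; O\!\left(\frac{\log M}{\sqrt{M}}\right),
\end{equation*}
which follows by integration by parts (take $u=\log x$, $dv = x^{-3/2}\,dx$); this contributes at most $C_1\, n \log M/\sqrt{M}$ for a universal constant $C_1$. For the middle range $\lfloor n/2 \rfloor < k \leq n - \lceil n^{3/4}\rceil$, once $n$ is large enough that $\lceil n^{3/4}\rceil \geq n_0$, I would use $\E{Z_k} \leq 4\sqrt{2}\alpha (n-k)^{-1/2}$, bound $\log k \leq \log n$, and reindex $j = n-k$ to obtain
\begin{equation*}
4\sqrt{2}\alpha \log n \sum_{j=\lceil n^{3/4}\rceil}^{\lfloor n/2\rfloor} j^{-1/2} \;\leq\; 4\sqrt{2}\alpha \log n \cdot \sqrt{2n} \;=\; 8\alpha \sqrt{n}\,\log n,
\end{equation*}
using $\sum_{j=1}^{N} j^{-1/2}\leq 2\sqrt{N}$. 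For the top range $n - \lceil n^{3/4}\rceil < k \leq n$, since $k > n/2$ (for $n$ large) we have $\lfloor n/k\rfloor = 1$, so this range contributes at most $\lceil n^{3/4}\rceil \log n \leq 2 n^{3/4}\log n$.

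Combining the three pieces, for $n \geq 4$ we have $8\alpha\sqrt{n} \leq 4\sqrt{2}\alpha n^{3/4}$, so the middle and top contributions together sum to at most $(2 + 4\sqrt{2}\alpha) n^{3/4}\log n$, which matches the target coefficient exactly. Added to the low-range contribution this yields the claimed bound for all sufficiently large $n$. The finitely many remaining pairs $(M,n)$ with $n_0 \leq M \leq n$ and $n$ below the threshold (so that either $n<4$ or $\lceil n^{3/4}\rceil < n_0$) are absorbed by enlarging $C_1$ into $C$: for such $n$ the whole sum is bounded by a quantity depending only on $n_0$, while $n\log M/\sqrt{M}$ is bounded below by a positive constant on that finite set. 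There is no genuine obstacle in the proof; the essential content is the correct identification of the intermediate cut-off $n^{3/4}$, which is exactly what produces the exponent appearing in the second term of the claim.
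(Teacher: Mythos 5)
Your proof is correct and follows essentially the same strategy as the paper's: split the sum into three ranges and apply the three bounds of Corollary \ref{bounds:mean}. The only superficial difference is that you place the second cut at $n - \lceil n^{3/4}\rceil$ rather than at $\lfloor n - \sqrt{n}\rfloor$ and estimate the middle block by actually summing $j^{-1/2}$, whereas the paper uses the cruder uniform bound $(n-k)^{-1/2} \leq n^{-1/4}$ over that block; both choices yield the same $(2 + 4\sqrt{2}\alpha)n^{3/4}\log n$ for the combined middle-and-top contribution.
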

\begin{proof} 
By applications of the upper bound in the previous theorem, we have
\begin{align*}
& \sum_{k = M}^{\lfloor n/2 \rfloor} \E{Z_k} \log k \leq  2 \sqrt{2} \alpha n \sum_{k = M}^{\lfloor n/2 \rfloor}  k^{-3/2} \log k \leq C n \frac{\log M}{\sqrt{M}}, \\
& \sum_{k = \lceil n/2 \rceil}^{\lfloor n-\sqrt{n}\rfloor } \E{Z_k} \log k \leq \frac{\log n}{\sqrt{n-n+\sqrt{n}}}  \sum_{k = \lceil n/2 \rceil}^{\lfloor n-\sqrt{n}\rfloor } 4 \sqrt{2} \alpha \leq 4 \sqrt{2} \alpha n^{3/4} \log n, \\
&  \sum_{k = \lceil n - \sqrt{n} \rceil }^{n} \E{Z_k} \log k \leq n \log n \sum_{k = \lceil n - \sqrt{n} \rceil }^{n}  \frac 1 k \leq 2 \sqrt{n} \log n.
\end{align*}
The claim follows by summing the three terms.
\end{proof}

\section{Subtrees of the root: local convergence}
We want to understand the properties of the subtree sizes of a node in a Galton-Watson tree conditional on having size $n$ when these trees are ordered from large to small. This section has key  inequalities that will be needed throughout the paper.

A formulation of the local convergence result discussed in Section \ref{sec:twopic} (ii) is given in the next proposition which is equivalent to Lemma 1 in Devroye \cite{devroye11}. (The convergence of $\xi_\epsilon$ had already been obtained by Kennedy \cite{kennedy75}.) We include the short proof for the sake of completeness. Here, by $S_\downarrow$, we denote the set of non-negative integer valued sequences $x_1, x_2, \ldots$ with $x_1 \geq x_2 \geq \ldots$ and only finitely many non-zero elements. Note that $S_\downarrow$ is countable. For $k \geq 1$ and $1 \leq i \leq k$, and real-valued random variables $X_1, \ldots, X_k$, denote by $X_{(i:k)}$ the $(k-i+1)$-st order statistic. (For random trees $\mathcal T_1, \ldots, \mathcal T_k$, we simplify the notation and write $|\mathcal T_{(i:k)}|$ for the size of $i$-th largest tree.)

\begin{prop} \label{prop2}
Let $\zeta$ have the size-biased distribution $(ip_i)_{ i\geq 0}$. Then, as $n \to \infty$, $n \in \N h + 1$,  in distribution on $S_\downarrow$, $$(N_2, N_3, \ldots) \to (|\mathcal T_{(1:\zeta -1)}|, \ldots, |\mathcal T_{(\zeta -1:\zeta -1)}|, 0, 0, \ldots),$$ where $\mathcal T_1$, $\mathcal T_2, \ldots,$ $\zeta$ are independent. In distribution and in mean, $\xi_\epsilon \to \zeta$, where we recall that $\xi_\epsilon$ is the number of children of the root of $\tau_n$. The convergence is with respect to the $k$-th moment if and only if $\E{\xi^{k+1}} < \infty$. 
\end{prop}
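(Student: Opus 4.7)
The plan is to compute $\Prob{\xi_\epsilon = j}$ explicitly and show, separately, convergence of the marginal and convergence of the joint distribution of the ordered children-subtree sizes conditional on $\xi_\epsilon = j$. Throughout, I will exploit the Kesten-style decomposition: given $\xi_\epsilon = j$, the $j$ subtrees hanging off the root of $\tau_n$ are distributed as $j$ i.i.d.\ copies of $\mathcal T$ conditioned to sum to $n-1$.

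First, for the marginal: conditioning on the root's offspring count and the unordered sequence of subtree sizes,
\begin{align*}
\Prob{\xi_\epsilon = j} = \frac{p_j \Prob{|\mathcal T_1| + \ldots + |\mathcal T_j| = n-1}}{\Prob{|\mathcal T| = n}}.
\end{align*}
Applying \eqref{ident1} and \eqref{ident22}, this equals $\frac{n}{n-1} \cdot \frac{j p_j \Prob{S_{n-1} = -j}}{\Prob{S_n = -1}}$. The local limit theorem \eqref{LLT} and its uniform version \eqref{LLTgen} give $\Prob{S_{n-1} = -j} \sim \alpha/\sqrt{n}$ for fixed $j$ and $\Prob{S_n = -1} \sim \alpha/\sqrt{n}$, so $\Prob{\xi_\epsilon = j} \to j p_j = \Prob{\zeta = j}$, proving $\xi_\epsilon \stackrel{d}{\to} \zeta$.

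Second, for the joint distribution on $S_\downarrow$: fix $j \geq 1$ and a sequence $k_2 \geq \ldots \geq k_j \geq 0$ in $I \cup \{0\}$, setting $s = k_2 + \ldots + k_j$. Using exchangeability of the $j$ i.i.d.\ subtrees and summing over which index carries the ``large'' tree of size $n-1-s$, I would derive
\begin{align*}
\Prob{N_2 = k_2, \ldots, N_j = k_j \mid \xi_\epsilon = j} = \frac{j!/m! \cdot \Prob{|\mathcal T| = n-1-s}\prod_{i=2}^{j}\Prob{|\mathcal T| = k_i}}{\Prob{|\mathcal T_1| + \ldots + |\mathcal T_j| = n-1}},
\end{align*}
where $m$ is a symmetry factor accounting for ties among $n-1-s, k_2, \ldots, k_j$. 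Again by \eqref{ident1}, \eqref{LLT}, and \eqref{ident22}, both $\Prob{|\mathcal T| = n-1-s}$ and $\Prob{|\mathcal T_1|+\ldots+|\mathcal T_j|=n-1}/j$ are asymptotically $\alpha n^{-3/2}$, so the ratio collapses to $(j-1)!/m'\cdot\prod_{i=2}^j \Prob{|\mathcal T|=k_i}$, which is exactly the probability that the order statistics of $j-1$ i.i.d.\ copies of $|\mathcal T|$ equal $(k_2, \ldots, k_j)$. Combining with the marginal convergence of $\xi_\epsilon$ and noting that $S_\downarrow$ is countable, convergence of each cylinder probability yields convergence in distribution on $S_\downarrow$.

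Third, for convergence of $\xi_\epsilon$ in mean and higher moments: the explicit formula
\begin{align*}
\E{\xi_\epsilon^k} = \frac{n}{n-1}\sum_{j\geq 0} \frac{j^{k+1} p_j \Prob{S_{n-1} = -j}}{\Prob{S_n = -1}}
\end{align*}
combined with the uniform bound $\Prob{S_{n-1} = -j} \leq C/\sqrt{n}$ from \eqref{LLTgen} and the lower bound $\Prob{S_n = -1} \geq c/\sqrt{n}$ shows $\E{\xi_\epsilon^k} \leq C' \E{\xi^{k+1}}$ uniformly in $n$. Hence when $\E{\xi^{k+1}} < \infty$, the sequence $(\xi_\epsilon^k)_{n}$ is uniformly integrable, so distributional convergence upgrades to $\E{\xi_\epsilon^k} \to \E{\zeta^k} = \E{\xi^{k+1}}$; conversely, if $\E{\xi^{k+1}} = \infty$ then $\E{\zeta^k} = \infty$ and no $L^k$-convergence is possible.

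The cleanest but most delicate step is the conditional joint computation, where I must (a) correctly account for ties when converting to ordered statistics, and (b) justify interchanging the limit with the implicit sum over the undetermined ``largest'' coordinate $N_1 = n-1-s$. The second point is handled by the fact that, since $n-1-s \to \infty$, only one of the $j$ coordinates can be large, so the asymptotic equivalence $\Prob{|\mathcal T| = n-1-s} \sim \alpha n^{-3/2}$ makes the ratio converge on each cylinder; tightness on the countable space $S_\downarrow$ is automatic since $\xi_\epsilon$ is tight (having a uniformly bounded moment).
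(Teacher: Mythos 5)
Your proposal is correct and follows essentially the same route as the paper: compute cylinder probabilities on the countable space $S_\downarrow$ via the identities \eqref{ident1}--\eqref{ident22} and the local limit theorem \eqref{LLT}, then get moment convergence from the uniform boundedness of $\Prob{S_{n-1}=-j}/\Prob{S_n=-1}$ together with Fatou for the converse. Two small remarks: your closing invocation of ``tightness'' is superfluous (on a countable discrete space, pointwise convergence of the point masses to a probability distribution already gives convergence in total variation by Scheff\'e), and the symmetry factor you write as $j!/m!$ should really be the multinomial coefficient $\binom{j}{\gamma_1,\ldots,\gamma_r}$ for the multiplicities among $n-1-s,k_2,\ldots,k_j$ (which for large $n$ reduces to $j\binom{j-1}{\beta_1,\ldots,\beta_r}$, since $n-1-s$ is eventually strictly the largest); the paper carries this out explicitly.
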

\begin{proof}
Let $k_1, k_2, \ldots \in S_\downarrow$ with $k_i \in \N h, 1 \leq i \leq \ell - 1$, $k_\ell > 0, k_{\ell +1} = k_{\ell + 2} = \ldots = 0$ and $p_{\ell + 1} > 0$. Let $y_1, \ldots, y_m$ be the different values among $k_1, \ldots, k_\ell$ and $\alpha_1, \ldots, \alpha_m$ be their multiplicities. 
With $C = {\ell \choose \alpha_1, \ldots, \alpha_m}$, 
\begin{align*}
\Prob{(|\mathcal T_{(1:\zeta-1)}|, \ldots, |\mathcal T_{(\zeta -1:\zeta-1)}|, 0, 0, \ldots) = (k_1, k_2, \ldots)} = C p_{\ell +1} (\ell + 1) \prod_{i=1}^\ell \Prob{|\mathcal T| = k_i}.
\end{align*}
Similarly, for all $n \in I_n$ with $n > 1 + k_1 + \sum_{i=1}^\ell k_i$, 
\begin{align*}
\Prob{(N_2, N_3, \ldots) = (k_1, k_2, \ldots)} =C p_{\ell +1} (\ell + 1) \prod_{i=1}^\ell \Prob{|\mathcal T| = k_i} \frac{\Prob{|\mathcal T| = n-1-\sum_{j=1}^\ell k_j}}{\Prob{|\mathcal T| = n}}.
\end{align*}
The distributional convergence in $S_\downarrow$ follows since the fraction in the last display turns to one as $n \to \infty$.
Since $S_\downarrow$ is countable, the function $f : S_\downarrow \to \N, f(x_1, x_2, \ldots ) = \min \{k \geq 1: x_k  = 0\}$ is continuous, and we deduce 
$\xi_\epsilon \to \zeta$ in distribution. Furthermore, for $k \in \N h$, using \eqref{ident1} and \eqref{ident22},
\begin{align*}
\Prob{\xi_\epsilon = k} & = p_k \frac{\Prob{|\mathcal T_1| + \ldots + |\mathcal T_k| = n-1}}{\Prob{|\mathcal T| = n}} \\
& = k p_k \left(1 - \frac 1 n\right) \frac{\Prob{S_{n-1} = -k}}{\Prob{S_n = -1}}.
\end{align*}
Since the fraction is uniformly bounded in $k$, $\xi_\epsilon^k$ is uniformly integrable if $\zeta^k$ is integrable. Finally, if $\sum_{\ell \geq 1} \ell^{k+1} p_\ell = \infty$, then $\E{\xi_\epsilon^k} \to \infty$ by Fatou's lemma. This concludes the proof. 
\end{proof}

We are interested in tail bounds on $N_k, k \geq 2$. The order is suggested by the behavior of the limiting random variable. 
\begin{prop}
Let $k \geq 1$ and assume that $\mathcal T_1$, $\mathcal T_2, \ldots,$ $\zeta$ are independent. \begin{itemize} 
\item [(i)] If $\E{\xi^{k+1}} < \infty$, then, as $t\to \infty$, 
\begin{align} \label{tail:limit} \Prob{|\mathcal T_{(k: \zeta-1)}| \geq t} = O(t^{-k/2}). \end{align} 
\item [(ii)] If $\sum_{\ell \geq k+1} p_\ell > 0$, then, as $t\to \infty$, 
\begin{align*}  \Prob{|\mathcal T_{(k: \zeta-1)}| \geq t} = \Omega(t^{-k/2}). \end{align*}
\item [(iii)] Finally, if $\E{\xi^{k+1}} = \infty$, then
$$\lim_{t\to \infty} t^{k/2} \Prob{|\mathcal T_{(k: \zeta-1)}| \geq t} = \infty.$$
\end{itemize}
\end{prop}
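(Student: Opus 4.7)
The plan is to reduce the three claims to tail estimates for a binomial random variable and then invoke the asymptotic \eqref{tailT}. Conditional on $\zeta = m+1$, the event $\{|\mathcal T_{(k:\zeta-1)}| \geq t\}$ coincides with $\{\mathrm{Bin}(m, p_t) \geq k\}$, where $p_t := \Prob{|\mathcal T| \geq t}$ and the binomial is independent of $\zeta$. By \eqref{tailT}, $p_t \sim 2\alpha/(h \sqrt t)$, so all three parts reduce to analyzing
\begin{align*}
q_k(t) \; := \; \Prob{|\mathcal T_{(k:\zeta-1)}| \geq t} \; = \; \sum_{m \geq k} \Prob{\zeta - 1 = m}\, \Prob{\mathrm{Bin}(m, p_t) \geq k}.
\end{align*}

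For part (i), I would apply the union bound $\Prob{\mathrm{Bin}(m, p_t) \geq k} \leq \binom{m}{k} p_t^k$ to obtain $q_k(t) \leq p_t^k\, \E{\binom{\zeta-1}{k}}$. The size-bias identity $\E{\zeta^j} = \E{\xi^{j+1}}$ (which holds because $\E{\xi}=1$) shows that $\E{\binom{\zeta-1}{k}}$ is finite under the hypothesis $\E{\xi^{k+1}} < \infty$, and $p_t^k = O(t^{-k/2})$ closes the argument. For part (ii), I would pick any $\ell \geq k+1$ with $p_\ell > 0$; such an $\ell$ exists by the hypothesis $\sum_{\ell \geq k+1} p_\ell > 0$. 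Since $\Prob{\zeta - 1 = \ell - 1} = \ell p_\ell > 0$, retaining only the $m = \ell - 1$ term in the series for $q_k(t)$ yields
\begin{align*}
q_k(t) \; \geq \; \ell p_\ell \binom{\ell-1}{k} p_t^k (1-p_t)^{\ell - 1 - k} \; = \; \Omega(t^{-k/2}).
\end{align*}

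The main obstacle is part (iii). The idea is to exploit the divergence of $\sum_\ell \ell^{k+1} p_\ell$ by truncating the series for $q_k(t)$ at a cut-off $M = M(t)$ that grows to infinity yet keeps $M p_t$ bounded (so that the geometric factor $(1-p_t)^{m-k}$ remains bounded below for all $m \leq M$). Concretely, fix a small $c > 0$ and set $M(t) := \lfloor c / p_t \rfloor$, so $M(t) \to \infty$ and $(1-p_t)^{M(t)} \to e^{-c}$. Combining the lower bound $\Prob{\mathrm{Bin}(m, p_t) \geq k} \geq \binom{m}{k} p_t^k (1-p_t)^{m-k}$ with $\Prob{\zeta - 1 = m} = (m+1) p_{m+1}$ gives, for $t$ large enough,
\begin{align*}
q_k(t) \; \geq \; c_1\, p_t^k \sum_{m=k}^{M(t)} \binom{m}{k} (m+1) p_{m+1} \; \geq \; c_2\, p_t^k \sum_{\ell = k+1}^{M(t)+1} \ell^{k+1} p_\ell,
\end{align*}
where the second inequality uses $\binom{m}{k}(m+1) = (k+1)\binom{m+1}{k+1} \geq c_3 (m+1)^{k+1}$ for $m$ large. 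Since $M(t) \to \infty$ and $\sum_\ell \ell^{k+1} p_\ell = \infty$ by hypothesis, the truncated tail diverges, whereas $p_t^k \sim c_4 t^{-k/2}$. Multiplying through by $t^{k/2}$ yields $t^{k/2} q_k(t) \to \infty$, as required. The only delicate point is the joint choice of $M(t)$ (needs to grow to infinity to pick up the divergent tail, yet slow enough that $(1 - p_t)^{M(t)}$ stays uniformly positive), which the choice $M(t) \asymp 1/p_t \asymp \sqrt{t}$ balances.
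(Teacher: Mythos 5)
Your proof is correct and follows essentially the same route as the paper's: condition on $\zeta$, reduce to binomial tail bounds in terms of $p_t = \Prob{|\mathcal T| \geq t} \sim 2\alpha/(h\sqrt t)$, and use the size-bias identity $\Prob{\zeta-1 = m} = (m+1)p_{m+1}$. The only (cosmetic) difference is in part (iii): the paper truncates at a fixed level $K$ depending on an arbitrary constant $C$, letting $(1-p_t)^K \to 1$ as $t\to\infty$ and then sending $C\to\infty$, whereas you use a cutoff $M(t) \asymp 1/p_t \asymp \sqrt t$ growing with $t$, which carries the divergent tail sum along in a single estimate; both variants rest on the same observation that $\sum_\ell \ell^{k+1}p_\ell = \infty$ while the geometric factor stays bounded away from zero.
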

\begin{proof}
We have 
\begin{align*}
\Prob{|\mathcal T_{(k: \zeta-1)}| \geq t} & \leq \sum_{\ell \geq k} p_{\ell + 1}(\ell + 1) {\ell \choose k} \Prob{|\mathcal T_{1}| \geq t, \ldots, |\mathcal T_{k}| \geq t}.
\end{align*}
By \eqref{tailT}, the right-hand side is asymptotically equivalent to
\begin{align*}
& \left(\frac{2 \alpha}{h \sqrt{t}}\right)^k  \sum_{\ell \geq k} p_{\ell + 1}(\ell + 1) {\ell \choose k}.
\end{align*}
Since $\E{\xi^{k+1}} < \infty$, the term is of order $t^{-k/2}$. For (ii), choose $\ell \geq k$ with $p_{\ell+1} > 0$. Then, 
\begin{align*}
\Prob{|\mathcal T_{(k: \zeta-1)}| \geq t} & \geq  p_{\ell + 1}(\ell + 1) \Prob{|\mathcal T_{1}| \geq t, \ldots, |\mathcal T_{k}| \geq t}  \sim \left(\frac{2 \alpha}{h \sqrt{t}}\right)^k \ p_{\ell + 1}(\ell + 1).
\end{align*}
Again, the right hand side is of order $t^{-k/2}$. 

For (iii), since $\E{\xi^{k+1}} = \infty$, for any $C > 0$, find $K$ sufficiently large such that $\sum_{\ell =k}^K p_\ell (\ell+1) {\ell \choose k} \geq C$. Then
\begin{align*}
\Prob{|\mathcal T_{(k: \zeta-1)}| \geq t} & \geq \Prob{|\mathcal T_{(k: \zeta-1)}| \geq t, |\mathcal T_{(k+1: \zeta-1)}| < t} \\
& = \sum_{\ell \geq k} p_{\ell + 1}(\ell + 1) {\ell \choose k} \Prob{|\mathcal T| \geq t}^k \Prob{|\mathcal T| < t}^{\ell-k}  \\
& \geq C \Prob{|\mathcal T| \geq t}^k \Prob{|\mathcal T| < t}^K. 
\end{align*}
As $t \to \infty$, using \eqref{tailT}, the right hand side is equivalent to $C (2\alpha h^{-1})^k t^{-k/2}$. As $C$ was chosen arbitrarily, the final assertion of the proposition follows.
\end{proof}

The next two results are proved in Appendix A.

\begin{thm} \label{thm:tailbounds}
Let $k \geq 2$ and $\E{\xi^{k+1}} < \infty$. Then, there exists a constant $\beta_k >0$, such that, for all $t \geq 1,  n \in I$, 
\begin{align} \label{firsttail}
\Prob{N_k \geq t} \leq \beta_k t^{(1-k)/2}.
\end{align}
If $\E{\xi^{(3k+1)/2}} < \infty$, a corresponding bound holds for $\Prob{N_{k+} \geq t}$ with $\beta_k$ replaced by some larger constant $\beta_{k+}$.
Similarly, bounds of the same form are valid for $\E{\xi_\epsilon \mathbf{1}_{\{N_k \geq t\}}}$ if $\E{\xi^{k+2}} < \infty$, and for $\E{\xi_\epsilon \mathbf{1}_{\{N_{k+} \geq t\}}}$ if $\E{\xi^{(3k+3)/2}} < \infty$.
\end{thm}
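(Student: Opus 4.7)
The plan is to bound $\Prob{N_k \geq t, |\mathcal T| = n}$ by a union bound and then divide by $\Prob{|\mathcal T| = n} \geq c\, n^{-3/2}$ from \eqref{LLT}. Conditioning on $\xi_\epsilon = j$, using independence of the $j$ subtrees of the root, the Dwass identity \eqref{ident22} and exchangeability gives
\[
\Prob{N_k \geq t,\, |\mathcal T|=n} \leq \sum_{j \geq k} p_j \binom{j}{k} \sum_{\substack{s_1,\ldots,s_k \geq t \\ s := s_1+\cdots+s_k \leq n-1}} \prod_{i=1}^k \Prob{|\mathcal T|=s_i}\, G_{j-k}(n-1-s),
\]
where $G_\ell(m) := \Prob{|\mathcal T_1|+\cdots+|\mathcal T_\ell|=m}$ for i.i.d.\ copies $\mathcal T_1,\ldots,\mathcal T_\ell$. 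From the uniform local-limit estimate $\Prob{S_m=x} \leq C m^{-1/2}$ extracted from \eqref{LLTgen}, combined with Dwass, one obtains $G_\ell(m) \leq C'(\ell+1)((m+1)\vee 1)^{-3/2}$ for all $\ell, m \geq 0$. Under $\E{\xi^{k+1}} < \infty$, $\sum_j p_j \binom{j}{k}(j-k+1) < \infty$, so the $j$-sum collapses into a constant, and it remains to estimate
\[
\sum_{s_i \geq t,\, s \leq n-1} \prod_{i=1}^k s_i^{-3/2}\ \bigl((n-s)\vee 1\bigr)^{-3/2},
\]
after replacing $\Prob{|\mathcal T|=s} \leq C_1 s^{-3/2}$ via the uniform upper half of \eqref{LLT}.

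The key combinatorial lemma, which I would prove by induction on $k$, is
\[
\sum_{s_1,\ldots,s_k \geq t,\ \sum s_i = s} \prod_{i=1}^k s_i^{-3/2} \leq C_k\, s^{-3/2}\, t^{-(k-1)/2}.
\]
The inductive step isolates the maximum, which satisfies $s_{\max} \geq s/k$, and picks up a factor $t^{-1/2}$ from $\sum_{s_1 \geq t} s_1^{-3/2} \asymp t^{-1/2}$ on one of the remaining variables. Summing this estimate against $((n-s)\vee 1)^{-3/2}$ and splitting at $s = n/2$ (so that one of the factors $s^{-3/2}$ or $(n-s)^{-3/2}$ is uniformly $\leq (n/2)^{-3/2}$) gives an overall bound of order $n^{-3/2}\, t^{(1-k)/2}$. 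Dividing by $\Prob{|\mathcal T|=n}$ yields the desired inequality for $n \in I$ sufficiently large; the bounded-$n$ regime is absorbed by enlarging $\beta_k$.

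The bound on $\E{\xi_\epsilon \mathbf{1}_{N_k \geq t}}$ follows from the same argument with one extra factor $j$ in every summand, so the relevant series becomes $\sum_j p_j \binom{j}{k}(j-k+1)\, j = O(\E{\xi^{k+2}})$. For $\Prob{N_{k+} \geq t}$, the starting point is the deterministic inclusion
\[
\{N_{k+} \geq t\} \subseteq \bigcup_{m \geq 1} \{\xi_\epsilon = k+m-1,\ N_k \geq t/m\},
\]
which follows from $N_{k+} \leq (\xi_\epsilon - k + 1) N_k$. Refining the above argument to extract a $j$-explicit bound $\Prob{\xi_\epsilon=j,\, N_k \geq t} \leq C\, p_j \binom{j}{k}(j-k+1)\, t^{(1-k)/2}$, the union bound over $m$ yields
\[
\Prob{N_{k+} \geq t} \leq C\, t^{(1-k)/2} \sum_{m \geq 1} p_{k+m-1} \binom{k+m-1}{k}\, m^{(k+1)/2},
\]
and this series converges precisely when $\E{\xi^{(3k+1)/2}} < \infty$. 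The weighted version for $\E{\xi_\epsilon \mathbf{1}_{N_{k+} \geq t}}$ carries an additional factor $j$, giving $\E{\xi^{(3k+3)/2}} < \infty$.

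The principal obstacle is establishing the inductive inequality cleanly and tracking the exact power $t^{-(k-1)/2}$ it produces; a secondary nuisance is controlling contributions from $s$ very close to $n-1$, where $(n-s)^{-3/2}$ is largest but is tamed by the constraint $s \geq kt$ and the factor $s^{-3/2}$. Once the $j$-explicit form of the bound is available, the $N_{k+}$ statements reduce to a bookkeeping exercise on moments of $\xi$.
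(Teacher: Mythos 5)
Your proposal is correct and follows essentially the same route as the paper's Appendix~A: condition on $\xi_\epsilon = j$, bound the $k$-fold convolution $\sum_{s_i \geq t,\, \sum s_i = s}\prod s_i^{-3/2} \lesssim s^{-3/2}t^{-(k-1)/2}$ (this is the first inequality in Lemma~\ref{lem:gam}, obtained by iterating the single-convolution estimate of Lemma~\ref{lem:easy}), convolve with $((n-s)\vee 1)^{-3/2}$, sum the $j$-weights under the stated moment conditions, and divide by $\Prob{|\mathcal T|=n} \asymp n^{-3/2}$; the reduction of $N_{k+}$ via $N_{k+} \le (\xi_\epsilon - k + 1)N_k$ together with the $j$-explicit bound is also the paper's device, the latter appearing as the remark \eqref{tailrem}. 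The one place where your sketch is loose is the inductive step of the combinatorial lemma: rather than ``isolating the maximum and picking up one factor $t^{-1/2}$,'' the clean execution either peels off one free variable at a time with Lemma~\ref{lem:easy} (as the paper does) or isolates the maximum once and sums the remaining $k-1$ variables freely in a single step; either way the power count $s^{-3/2}t^{-(k-1)/2}$ comes out as you claim.
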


\textbf{Remark.} The proof of Theorem \ref{thm:tailbounds} shows the following stronger result: for $k \geq 2$, there exists a constant $C >0$ such that, for all $n \in I, \ell \geq k$ and $t \geq 1$, 
\begin{align} \label{tailrem}
\Prob{N_k \geq t, \xi_\epsilon = \ell} \leq C p_\ell \ell^{k+1} t^{(1-k)/2}.
\end{align}
Lemma \ref{lem:conheight} below is the only result in this work that requires this stronger bound. 

\medskip \textbf{Remark.} Since $N_k \stackrel{d}{\longrightarrow} \mathcal T_{(k-1 : \zeta - 1)}$ and the moment condition on this random variable in order to have tails decaying as in  \eqref{tail:limit} is tight, it is reasonable to conjecture that a tail bound such as \eqref{firsttail} holds if and only if $\E{\xi^k} < \infty$. The bounds presented in Appendix A are sufficient to show that the latter is indeed necessary: if $\E{\xi^k} = \infty$, then a bound of the form \eqref{firsttail} is not valid. (A proof of this claim is given in Appendix A.)

\begin{thm}  \label{thm:tailbounds2}
Let $k \geq 2$ and $\sum_{\ell \geq k} p_\ell > 0$. Then, there exist constants $\beta_k^* > 0, s_k > 0$ and $n_2 = n_2(k) \geq 1$, such that, for all $n \geq n_2, n \in I,$ and $1 \leq t \leq n / k - s_k$, 
\begin{align*}
\Prob{N_k \geq t} \geq \beta_k^* t^{(1-k)/2}.
\end{align*}
\end{thm}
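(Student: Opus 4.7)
My plan is to lower bound $\Prob{N_k \geq t}$ by exhibiting an explicit favourable configuration of the tree and estimating its probability via the local limit theorem \eqref{LLT}. Choose $\ell \geq k$ with $p_\ell > 0$, which exists by assumption. Since $\E{\xi}=1$ together with $\sigma^2>0$ forces $p_0>0$, I consider the event $\mathcal E$ that the root has $\xi_\epsilon=\ell$ children, the last $\ell-k$ of them are leaves (each contributing a factor $p_0$), and the first $k$ child subtrees $\mathcal T_1,\ldots,\mathcal T_k$ all have size at least $t$ with total $m:=n-1-(\ell-k)$. On $\mathcal E$, clearly $N_k\geq t$, and by independence of the root's child subtrees,
\begin{equation*}
\Prob{N_k\geq t}\;\geq\;\frac{p_\ell \, p_0^{\ell-k}}{\Prob{|\mathcal T|=n}}\,\Prob{|\mathcal T_i|\geq t \text{ for all } i, \; \textstyle\sum_{i=1}^k |\mathcal T_i|=m}.
\end{equation*}

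Using $\Prob{|\mathcal T|=n}\sim\alpha n^{-3/2}$ from \eqref{LLT}, the task reduces to producing a lower bound of order $t^{(1-k)/2}n^{-3/2}$ for the numerator. I would expand it as $\sum\prod_i\Prob{|\mathcal T|=a_i}$ over $(a_1,\ldots,a_k)\in I^k$ with $a_i\geq t$ and $\sum a_i=m$, and invoke the uniform lower bound $\Prob{|\mathcal T|=a}\geq c_0 a^{-3/2}$ from \eqref{LLT}. The key step is to restrict to configurations in which $a_1,\ldots,a_{k-1}$ lie in a suitable window $[t,t+r]\cap I$ while $a_k=m-\sum_{i<k}a_i$ is determined. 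Choosing $r$ of order $\min(t,n-kt)$ yields $\Theta((r/h)^{k-1})$ admissible tuples, each contributing at least $c_0^k t^{-3(k-1)/2} a_k^{-3/2}$. Summing and combining with the denominator gives the target $\Omega(t^{(1-k)/2})$ in the generic regime where $t\leq cn/k$ with $c<1$: there $r\sim t$, $a_k$ is of order $n$, and the sum yields precisely $t^{-(k-1)/2}n^{-3/2}$, which matches after dividing through by $\Prob{|\mathcal T|=n}$.

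The main obstacle is the boundary regime where $t$ is close to $n/k$. There the window length $r$ must shrink to $\Theta(n-kt)$, the number of admissible tuples collapses, and a finer version of the local limit theorem --- namely the Gaussian-corrected form \eqref{LLTgen} --- must replace the leading-order asymptotic \eqref{LLT}. The constant $s_k$ should be chosen large enough that $n-kt$ stays above the constant threshold required by the combinatorial count to still deliver the right order. Throughout, the congruence constraints defining $I$ and $I_n$ (using $n\equiv 1\pmod h$ together with $\ell\equiv 0\pmod h$, which holds for any $\ell$ with $p_\ell>0$ by definition of $h$) must be respected; I expect the boundary case analysis and this arithmetic bookkeeping to be the most technically involved part of the proof.
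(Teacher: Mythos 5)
Your high-level strategy — condition on the root having degree $\ell$ with $p_\ell > 0$, ensure $k$ of the $\ell$ subtrees are large, and reduce to a local-limit-theorem computation — matches the paper's proof. You add the twist of forcing the remaining $\ell-k$ children to be leaves, which is a clean simplification since $p_0>0$ does follow from criticality with $\sigma^2>0$; the paper instead sums over their sizes (see Lemma~\ref{lem:gam2}), and both routes concentrate on the same dominant configurations in which those extra subtrees carry only $O(1)$ mass. The genuine divergence is in how the multi-index sum over $(a_1,\ldots,a_k)$ is bounded from below: you restrict $a_1,\ldots,a_{k-1}$ to a window of length $r=\Theta(\min(t,n-kt))$ and count tuples, whereas the paper proves a dedicated convolution estimate (Lemma~\ref{lem:gam2}, resting on Lemma~\ref{lem:easy}) that lower-bounds the full sum without any windowing.

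The window argument is sound for $t\leq n/(k+\varepsilon)$ with $\varepsilon>0$ fixed, but it breaks down in the boundary regime where $t$ is within $O(1)$ of $n/k$, which the theorem's stated range $1\leq t\leq n/k-s_k$ explicitly includes (recall $s_k$ is a constant). There $n-kt$ and hence $r$ collapse to constants, so the number of admissible tuples is $O(1)$, every $a_i$ is $\Theta(t)$, and after normalizing by $\Prob{|\mathcal T|=n}\sim\alpha n^{-3/2}\sim \alpha(kt)^{-3/2}$ your bound only delivers $\Theta(t^{3(1-k)/2})$, which is strictly weaker than the required $\Theta(t^{(1-k)/2})$ for $k\geq 2$. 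You correctly flag this as the main obstacle, but the proposed remedy — replacing~\eqref{LLT} with the Gaussian-corrected~\eqref{LLTgen} — does not repair it: the shortfall is combinatorial (too few tuples fit in the window), not analytic, and no sharpening of the $\Prob{|\mathcal T|=a}$ asymptotics changes that count. Enlarging the constant $s_k$ cannot close the gap either, since the deficit between the two orders is a factor $t^{k-1}$, unbounded as $n\to\infty$ with $t=n/k-s_k$. To cover the boundary regime you would have to lower-bound the full convolution sum rather than a thin window, which is exactly what Lemma~\ref{lem:gam2} and Lemma~\ref{lem:easy} do in the paper; as written your plan has a real gap in that regime.
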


From Theorems \ref{thm:tailbounds} and  \ref{thm:tailbounds2} we deduce the following corollary using the  well-known formula $\E{X} = \int_0^\infty \Prob{X > t} dt$ for a non-negative random variable $X$.

\begin{cor} \label{subtreebounds}
As $n \to \infty$, $n \in \N h + 1$, 
\begin{itemize}
\item [(i)] if $\E{\xi^3} < \infty$, then $\E{N_2} = \Theta(\sqrt{n})$ and $\E{\sqrt{N_2}} = \Theta(\log{n})$, 
\item [(ii)] if $\E{\xi^{7/2}} < \infty$, then $\E{N_{2+}} = \Theta(\sqrt{n})$,
\item [(iii)] if $\E{\xi^4} < \infty$, then $\E{N_3} = O(\log n)$, 
\item [(iv)] if $\E{\xi^5} < \infty$, then $\E{N_{3+}} =  O(\log n)$ and $\E{N_4} = O(1)$, and
\item [(v)] if $\E{\xi^{13/2}} < \infty$, then $\E{N_{4+}} = O(1).$ 
\end{itemize}
If $\sum_{\ell \geq 3} p_\ell > 0$, then big-$O$ in (iii) can be replaced by $\Theta$.
\end{cor}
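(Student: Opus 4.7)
\bigskip

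\noindent\textbf{Proof plan.} The plan is to apply the layer-cake identity $\E{X}=\int_0^\infty \Prob{X>t}\,dt$ to each of the nonnegative random variables $N_k$, $N_{k+}$, and $\sqrt{N_2}$, and then insert the tail estimates from Theorems \ref{thm:tailbounds} and \ref{thm:tailbounds2} together with the deterministic bound $N_k,N_{k+}\le n$. Throughout I will split the range of integration at $t=1$, where the trivial bound $\Prob{X>t}\le 1$ dominates, and at $t$ of order $n$, where the trees cannot be larger than $n$.

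For the upper bounds, Theorem \ref{thm:tailbounds} gives, under the stated moment hypotheses, $\Prob{N_k\ge t}\le \beta_k t^{(1-k)/2}$ and similarly for $N_{k+}$ with exponent $(3k+1)/2$ on $\xi$. This yields immediately: for $k=2$,
\[
\E{N_2}\le 1+\int_1^n \beta_2\, t^{-1/2}\,dt=O(\sqrt n),
\]
and identically $\E{N_{2+}}=O(\sqrt n)$ using $\E{\xi^{7/2}}<\infty$. For $k=3$ (resp.\ $k=3+$ with $\E{\xi^5}<\infty$) the tail decays like $t^{-1}$ and the integral up to $n$ is $O(\log n)$. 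For $k=4$ (resp.\ $k=4+$ with $\E{\xi^{13/2}}<\infty$) the tail decays like $t^{-3/2}$, which is integrable at infinity, giving $O(1)$. The bound for $\E{\sqrt{N_2}}$ is obtained via $\E{\sqrt{N_2}}=\int_0^\infty \Prob{N_2>s^2}\,ds$; inserting $\Prob{N_2>s^2}\le \min(1,\beta_2/s)$ and integrating up to $s=\sqrt n$ yields $O(\log n)$.

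For the lower bounds in (i), I would use Theorem \ref{thm:tailbounds2} with $k=2$. Note that the required hypothesis $\sum_{\ell\ge 2}p_\ell>0$ is automatic: the assumptions $\E{\xi}=1$ and $\sigma^2>0$ rule out $\xi$ being concentrated on $\{0,1\}$. Hence for all $n$ large enough, $\Prob{N_2\ge t}\ge \beta_2^* t^{-1/2}$ on $1\le t\le n/2-s_2$, so
\[
\E{N_2}\ge \int_1^{n/2-s_2}\beta_2^*\,t^{-1/2}\,dt=\Omega(\sqrt n),
\]
and in the same way, using the substitution $t=s^2$,
\[
\E{\sqrt{N_2}}\ge \int_1^{\sqrt{n/2-s_2}}\beta_2^*\,s^{-1}\,ds=\Omega(\log n).
\]
The lower bound for $\E{N_{2+}}$ in (ii) follows from $N_{2+}\ge N_2$. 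Finally, when $\sum_{\ell\ge 3}p_\ell>0$, the same argument with $k=3$ in Theorem \ref{thm:tailbounds2} gives $\E{N_3}=\Omega(\log n)$, upgrading the $O(\log n)$ in (iii) to $\Theta(\log n)$.

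There is no real obstacle here: each item is a one-line application of the integral identity once the relevant tail bound from the preceding theorems is invoked, with the only subtlety being to cut off the integration at the deterministic ceiling $n$ (or its equivalent for $\sqrt{N_2}$) in the cases where the tail estimate alone is not integrable at infinity.
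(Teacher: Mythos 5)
Your proof is correct and follows precisely the route the paper intends: the paper's own justification is the single sentence preceding the corollary, which points to the layer-cake identity $\E{X}=\int_0^\infty \Prob{X>t}\,dt$ applied to the tail bounds of Theorems \ref{thm:tailbounds} and \ref{thm:tailbounds2}, and you have simply written out the computations that sentence leaves implicit, including the correct matching of moment hypotheses to the exponents $k+1$ and $(3k+1)/2$ and the observation that $\sum_{\ell\ge 2}p_\ell>0$ is forced by $\E{\xi}=1$ and $\sigma^2>0$.
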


\section{The $2$-heavy tree} \label{sec:2ary}
Let $\mathbb T$ be a finite ordered rooted tree with vertex set $\mathcal V(\mathbb T)$. Its root is labeled $\epsilon$. As in Section \ref{sec:mainresults}, to each node $v \in \mathcal V(\mathbb T)$, $v \neq \epsilon$, we assign the rank $\rho_v$ where $\rho_v = i$ if its subtree is the $i$-th largest among all the subtrees rooted at its siblings. Ties are broken by the original order in the tree. 
If $v$ has  distance $k \geq 1$ from $\epsilon$, let $v_0 := \epsilon, v_1, \ldots,$ $v_{k-1}, v_{k} = v$ be the nodes on the path connecting the root to $v$ where $v_i$ has depth $i$. The path from $\epsilon$ to $v$ has nodes of indices $\rho_{v_1}, \ldots, \rho_{v_k} = \rho_v$. It is called the index sequence of $v$ and denoted by $\kappa(v)$. We define $\kappa(\epsilon) = \emptyset$ as the empty word. It is convenient to borrow some notation from theoretical computer science for sequences of integers: $\{i_1, \ldots, i_k\}$ denotes one symbol from the set $\{i_1, \ldots, i_k\}$ and $A^*$ denotes a sequence of arbitrary length (even $0$) drawn from $A \subseteq \N$. We define the set of nodes $\mathcal V$ satisfying a sequence as the collection of all nodes in the tree have index sequences belonging to a set of sequences. For example, $\mathcal V(1^*) := \mathcal V(\{1\}^*)$ is the set of nodes in $\mathbb T$ that have all their ancestors and itself of index $1$ and the root. Of course,  the nodes in $\mathcal V(1^*)$ form the heavy path.
Furthermore, we recover the $k$-heavy tree $\mathcal V(\{1, \ldots, k \}^*)$ of $\mathbb T$ by removing from $\mathbb T$ all nodes of index larger than $k$ and its subtrees. For $k = 2$, we obtain the $2$-heavy tree. The $2$-heavy Galton-Watson tree is denoted by $\mathcal B_n$, and its size  by $B_n$. It is tempting to think that $B_n$ is increasing in probability or, at least, in mean. The following example shows that this is not the case. Let $p_0, p_2, p_5 > 0$ with $p_0 + p_2 + p_5 = 1$. Then, on the one hand, almost surely, $\tau_5$ is binary and $B_5 = 5$. On the other hand, almost surely, $\tau_6$ consists of the root with five children. Thus $B_6 = 3$. Note that this issue can not be avoided by assuming $p_i > 0$ for all $i$.
\begin{thm} \label{thm:meanbinary}
Let $\E{\xi^5} < \infty$. There exist constants $\nu_1, \nu_2 > 0$, such that, for all $n \in I$, 
\begin{align} \label{lowbound}
\E{B_n} \geq \nu_1 n  + \nu_2 \sqrt{n} - \frac 1 2.
\end{align}
\end{thm}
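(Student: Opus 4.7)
The plan is to set up the natural recursion for $B_n$ and prove the inequality by strong induction on $n \in I$ using the ansatz $f(n) := \E{B_n} \geq \nu_1 n + \nu_2 \sqrt{n} - 1/2$. Since, conditional on the multiset of sizes of the root's subtrees, each subtree is an independent conditional Galton-Watson tree of that size (a standard property of conditioned GW trees), the defining structure of $\mathcal{B}_n$ at the root gives
\[
B_n = 1 + B(\mathcal{T}^{(1)}) + B(\mathcal{T}^{(2)}),
\]
where $\mathcal{T}^{(1)}, \mathcal{T}^{(2)}$ are the two largest subtrees of the root and $B(\emptyset) := 0$. Taking expectations, $f(n) = 1 + \E{f(N_1)} + \E{f(N_2)}$, with $f(0) = 0 \geq -1/2$ providing a consistent initial value.

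A direct attempt at $f(n) \geq \nu_1 n$ fails: the recursion loses an additive $\nu_1(1 + \E{N_{3+}})$ at each step and, since $\E{N_{3+}} = O(\log n)$ is unbounded, no universal $\nu_1 > 0$ survives. The $\nu_2\sqrt{n}$ correction is designed to generate a compensating surplus of order $\Theta(\log n)$ per step via $\E{\sqrt{N_2}} = \Theta(\log n)$. Plugging the ansatz into the recursion and using $\E{N_1 + N_2} = n - 1 - \E{N_{3+}}$ together with the elementary inequality $\sqrt{N_1} \geq N_1/\sqrt{n}$ (valid since $N_1 \leq n$), which yields $\E{\sqrt{N_1}} \geq \sqrt{n} - (1 + \E{N_{2+}})/\sqrt{n}$, the inductive step reduces to
\[
\nu_2\left(\E{\sqrt{N_2}} - (1 + \E{N_{2+}})/\sqrt{n}\right) - \nu_1(1 + \E{N_{3+}}) \geq -\tfrac{1}{2}.
\]
Corollary \ref{subtreebounds} now supplies the required estimates: $\E{N_{2+}} = O(\sqrt{n})$ bounds $(1+\E{N_{2+}})/\sqrt{n}$ by a constant $c_1$; $\E{\sqrt{N_2}} \geq c_2 \log n$; and $\E{N_{3+}} \leq C_3 \log n$, the last being where the hypothesis $\E{\xi^5} < \infty$ is essential. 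The target inequality becomes
\[
(\nu_2 c_2 - \nu_1 C_3)\log n \geq \nu_2 c_1 + \nu_1 - \tfrac{1}{2},
\]
which is satisfied by choosing $\nu_1 = \nu_2 c_2/(2C_3)$ (so the left side is nonnegative) and then $\nu_2$ small enough that $\nu_2 c_1 + \nu_1 \leq 1/2$ (so the right side is non-positive). This proves the inductive step for all $n \geq n_3$, where $n_3$ is the threshold beyond which the cited asymptotic bounds hold.

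The base case $n \in I \cap [1, n_3]$ is handled by the trivial bound $f(n) \geq 1$: by further reducing $\nu_1, \nu_2$ so that $\nu_1 n_3 + \nu_2 \sqrt{n_3} \leq 3/2$, the inequality $\nu_1 n + \nu_2 \sqrt{n} - 1/2 \leq 1$ is automatic for $n \leq n_3$. The main obstacle is identifying the correct ansatz: without the $\sqrt{n}$ correction the induction fails, and the exponent $1/2$ is exactly what is needed for the surplus $\E{\sqrt{N_2}} = \Theta(\log n)$ to dominate the logarithmic loss $\nu_1 \E{N_{3+}}$ uniformly in $n$.
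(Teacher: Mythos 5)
Your proof is correct and takes essentially the same approach as the paper: the same root recursion $\E{B_n} = 1 + \E{B(N_1)} + \E{B(N_2)}$, the same ansatz $\nu_1 n + \nu_2 \sqrt{n} - 1/2$, and the same appeal to Corollary~\ref{subtreebounds} so that the $\Omega(\log n)$ surplus from $\nu_2\E{\sqrt{N_2}}$ absorbs the $O(\log n)$ loss from $\nu_1\E{N_{3+}}$. The only cosmetic differences are your elementary inequality $\sqrt{N_1} \geq N_1/\sqrt{n}$ in place of the paper's $\sqrt{1-x} \geq 1-x$, and your explicit base-case threshold $n_3$, which is in fact slightly more careful than the paper's listed base cases $\{0,1,2,3\}$ since the constants from the corollary only apply beyond some finite threshold anyway.
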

\begin{proof} The proof uses induction. First, since $B_i = i$ for $i \in \{0, 1,2, 3\} \cap I$, we need to have 
\begin{align} \label{0b}
 \nu_1 + \nu_2  \leq 3/2, \quad 2\nu_1 + \sqrt{2} \nu_2  \leq 5/2, \quad 3\nu_1 + \sqrt{3} \nu_2 \leq 7/2.\end{align}
Assume \eqref{lowbound} holds up to $n-1$ (in the set $I$) with $n \geq 4$. Then, for $n \in I$, and with $b(n) = \E{B_n}$, 
\begin{align*}
b_n & = 1 + \E{b(N_1)} + \E{b(N_2)} \\
& \geq  \nu_1 \E{N_1 + N_2} + \nu_2 \E{\sqrt{N_1} + \sqrt{N_2}}  \\
& =  \nu_1(n-1) - \nu_1 \E{N_{3+}} + \nu_2 \E{\sqrt{n-1 - N_{2+}} + \sqrt{N_2}} \\
& \geq  \nu_1(n-1) - \nu_1 \E{N_{3+}} + \nu_2 \sqrt n - \nu_2 - \nu_2  \E{N_{2+}} / \sqrt{n-1} + \sqrt{N_2}.
\end{align*}
Here, in the last step, we have used that $1 - x \leq \sqrt{1-x}$ for all $x \in [0,1]$. By the previous corollary, there exist strictly positive constants $C_1, c_2, C_3$, such that 
\begin{align*}
\E{N_{3+}} \leq C_1 \log n, \quad \E{\sqrt{N_{2}}} \geq c_2 \log n , \quad \E{N_{2+}} \leq C_3 \sqrt{n-1}. 
\end{align*}
Thus,
\begin{align*} b_n \geq  - \nu_1 - \nu_2 - \nu_2 C_3 + (\nu_2 c_2 - \nu_1 C_1) \log n + \nu_1 n + \nu_2 \sqrt{n}.
\end{align*}
From here, the claim $b_n \geq \nu_1n + \nu_2 \sqrt{n} -1/2 $ follows if both
\begin{align*}
1/2 - \nu_1 - \nu_2(1+C_3)  \geq 0, \quad \text{and},  \quad \nu_2 c_2 - \nu_1 C_1 \geq 0. \end{align*}
The last expression and all inequalities in \eqref{0b} can simultaneously be satisfied by choosing 
 $\nu_2 = \nu_1 C_1 /c_2$ and 
\begin{align*}
\nu_1 \leq \min \left \{ \frac {3}{2(1 + C_1/c_2)}, \frac {5}{2(2 + \sqrt{2}C_1/c_2)}, \frac {7}{2(3 + \sqrt{3}C_1/c_2)}, \frac{1}{2(1 + C_1(1+ C_3 )/c_2)} \right \}.
\end{align*}
\end{proof}

Let us return to a deterministic ordered rooted tree $\mathbb T$.
For a node $v \in \mathcal V(\mathbb T)$ define by $n(v)$ the size of the subtree rooted at $v$. 
For $M \geq 2$, let $\mathbb T_1$ be the binary subtree of the $2$-heavy tree of $\mathbb T$ containing all nodes with subtree sizes at least $M$.
Then, let $\mathcal V_2$ be set of nodes in the $2$-heavy tree of $\mathbb T$ with graph distance  1 from $\mathbb T_1$. By construction, $n(v) \leq M-1$ for $v \in \mathcal V_2$. Furthermore, let 
$\mathcal V_4$ be subset of nodes $v \in \mathcal V(\mathbb T)$ which are in a subtree rooted at a node in $\mathcal V_2$. (In particular, $\mathcal V_2 \subseteq \mathcal V_4$.) Next, let $\mathcal V_3 = \mathcal V(\mathbb T) \backslash (\mathcal V(\mathbb T_1) \cup \mathcal V_4)$ such that $|\mathbb T_1| + |\mathcal V_3| + |\mathcal V_4| = |\mathbb T|$. See Figure \ref{fig:bin} below for an illustration.
We denote the heavy binary tree in $\mathbb T$ by $\mathbb B$.
Note that, by construction, $|\mathcal V_4| \leq (M-1) |\mathcal V_2|$ and $|\mathbb B| \geq |\mathbb T_1| + |\mathcal V_2|$. Thus, 
\begin{align*} (M-1) |\mathbb B| \geq |\mathbb T_1| + (M-1) |\mathcal V_2| = |\mathbb T| -|\mathcal V_3| - |\mathcal V_4| + (M-1) |\mathcal V_2| & \geq |\mathbb T| -|\mathcal V_3|. \end{align*}
We arrive at the useful inequality, 
\begin{align*} |\mathbb B| \geq \frac{|\mathbb T| - |\mathcal V_3|}{ M-1}. \end{align*}
Let $\mathcal V_3(\tau_n)$ be $\mathcal V_3$ in the tree $\tau_n$. Then, with $0 < c < 1$, $$\Prob{|\mathcal V_3(\tau_n)| \geq c n} \to 0 \quad \text{implies}  \quad \Prob{B_n \leq n (1-c) / (M-1)} \to 0.$$

       \begin{figure}[h]  \centering
\includegraphics[width=.7\columnwidth]{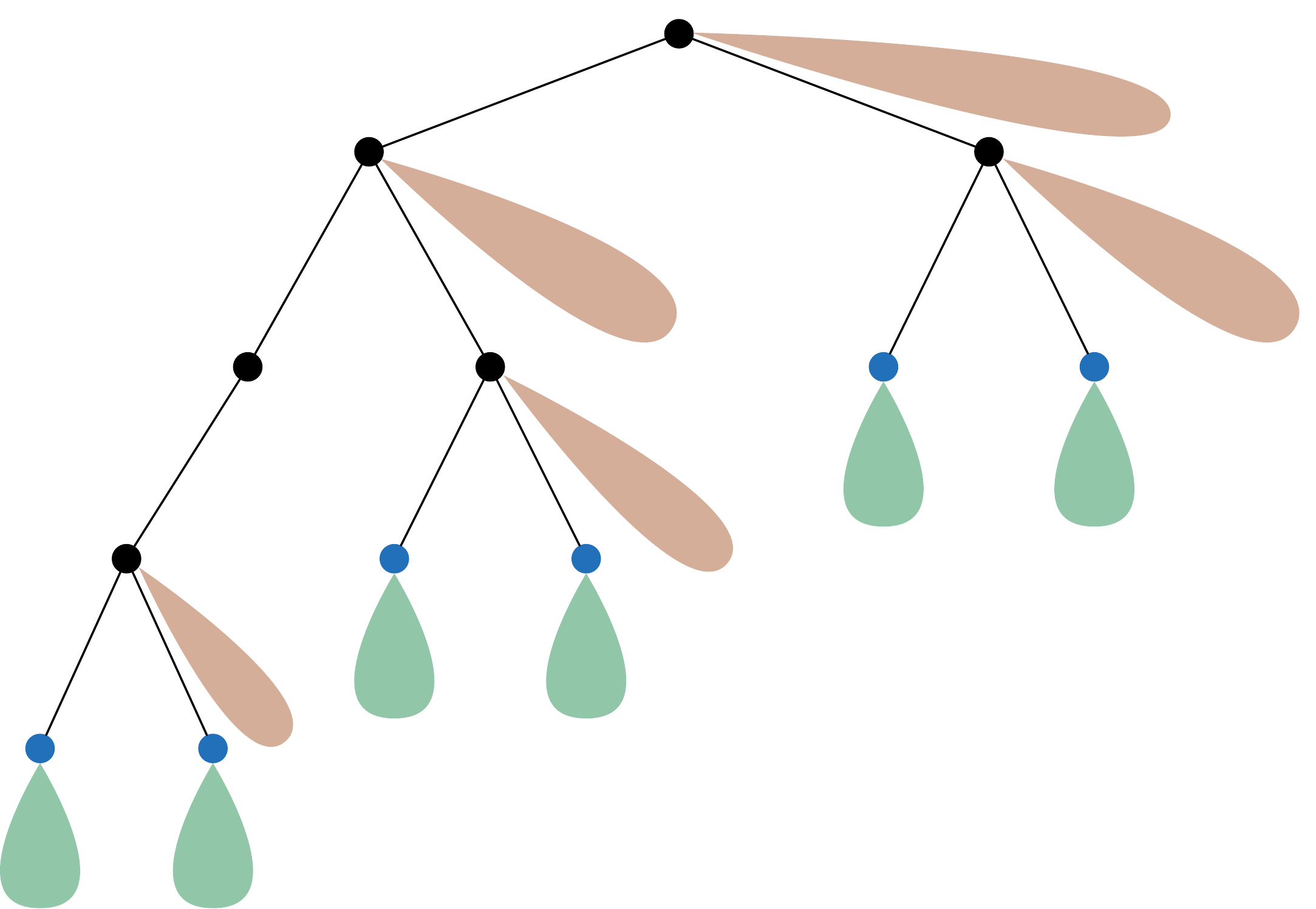} 
\caption{Instance of the construction underlying the proof Theorem \ref{thm:low2ary}. Black-filled nodes form $\mathbb T_1$, non-filled nodes constitute $\mathcal V_2$, dashed subtrees indicate
$\mathcal V_3$, and $\mathcal V_4$ is represented by the solid subtrees merged with $\mathcal V_2$.} 
 \label{fig:bin} \end{figure}

\begin{thm} \label{thm:low2ary}
Let $\E{\xi^5} < \infty$. Then, for some $0 < \gamma < 1$, we have $\Prob{|\mathcal V_3(\tau_n)| \geq \gamma n} \to 0$. Also, there exists a constant $\upsilon > 0$ such that $$\lim_{n \to \infty} \Prob{B_n \geq \upsilon n} = 1.$$
\end{thm}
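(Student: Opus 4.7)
The second assertion follows immediately from the first through the deterministic inequality $B_n \geq (n - |\mathcal V_3(\tau_n)|)/(M-1)$ displayed just above the theorem: any bound $\Prob{|\mathcal V_3(\tau_n)| \geq \gamma n} \to 0$ with $\gamma < 1$ and fixed integer $M \geq 2$ automatically gives $\Prob{B_n \geq \upsilon n} \to 1$ with $\upsilon = (1-\gamma)/(M-1)$. The plan is therefore to focus on the first statement, treating $M$ and $\gamma < 1$ as parameters to be fixed at the end.

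The key upper bound is
$$|\mathcal V_3(\tau_n)| \;=\; \sum_{u \in \mathbb T_1} N_{3+}(u) \;\leq\; X_M \;:=\; \sum_{u \in \tau_n \,:\, N(u) \geq M} N_{3+}(u),$$
since $\mathbb T_1 \subseteq \{u : N(u) \geq M\}$. Because a fringe subtree of $\tau_n$ of size $k$ is distributed as $\tau_k$, the exact fringe-sum identity gives $\E{X_M} = \sum_{k \geq M,\, k \in I} \E{Z_k}\,\E{N_{3+} \mid |\mathcal T| = k}$. Under $\E{\xi^5} < \infty$, Corollary \ref{subtreebounds}(iv) supplies $\E{N_{3+} \mid |\mathcal T| = k} = O(\log k)$ for large $k \in I$ (small $k$ being handled by the deterministic bound $N_{3+} \leq k$), and applying Corollary \ref{logbound} then produces $\E{X_M} \leq C_1 n (\log M)/\sqrt{M} + O(n^{3/4} \log n)$. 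Consequently $\mu(M) := \limsup_n \E{X_M}/n$ satisfies $\mu(M) \to 0$ as $M \to \infty$.

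The main obstacle is upgrading this expectation estimate to a high-probability bound, since Markov alone cannot deliver a vanishing probability for fixed $M$. The plan is a variance estimate $\text{Var}(X_M) = o(n^2)$, followed by Chebyshev, yielding $X_M/n \to \mu(M)$ in probability. Writing $X_M = \sum_v g(\tau_n(v))$ with $g(\tau_n(v)) := N_{3+}(v)\mathbf{1}_{N(v) \geq M}$, one expands $\E{X_M^2}$ over ordered pairs $(v,w)$ in three cases: coincident ($v=w$), strict ancestor/descendant, and disjoint subtrees. The first two reduce via the single-node fringe identity to a sum controlled by the tail bound $\Prob{N_{3+} > t} = O(t^{-1})$ from Theorem \ref{thm:tailbounds} (which gives $\E{N_{3+}^2 \mid |\mathcal T| = k} = O(k)$) combined with the mean estimates of Corollary \ref{bounds:mean}; their joint contribution is of order $n^{3/2}$. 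The delicate disjoint-pair term factorizes through a two-node fringe identity and matches $\E{X_M}^2$ up to lower-order corrections stemming from the size-conditioning of $\tau_n$, again estimated via Corollary \ref{bounds:mean}. With $X_M/n \to \mu(M)$ in probability in hand, fixing $M$ large so that $\mu(M) < 1/4$ and taking $\gamma = 1/2$ produces $\Prob{|\mathcal V_3(\tau_n)| \geq \gamma n} \leq \Prob{X_M \geq \gamma n} \to 0$, completing the proof.
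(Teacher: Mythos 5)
Your reduction of the second claim to the first via the inequality $B_n \geq (n - |\mathcal V_3(\tau_n)|)/(M-1)$, and your mean estimate $\E{X_M} \leq C_1 n \log M/\sqrt{M} + O(n^{3/4}\log n)$ via Corollaries \ref{subtreebounds} and \ref{logbound}, both match the paper. Where you diverge is the concentration step, and this is where you have a genuine gap. The paper never estimates $\text{Var}(X_M)$. Writing $W_k = \sum_{v : N(v)=k} N_{3+}(v)$, it splits $\sum_{k \geq M} W_k$ into a tail $k \geq \omega_n$ with $\omega_n = o(n^{1/4})$, $\omega_n \to \infty$, which is disposed of by Markov alone since its mean is $O(n\log\omega_n/\sqrt{\omega_n}) = o(n)$, and a head $M \leq k < \omega_n$, disposed of by a union bound over the $O(\omega_n)$ values of $k$, each term concentrated via a \emph{per-$k$} Chebyshev that needs only $\text{Var}(Z_k) = O(n)$ from \eqref{variance} together with a second moment of $N_{3+}$ in $\tau_k$. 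No cross-$k$ covariance ever enters.

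Your single global bound $\text{Var}(X_M) = o(n^2)$ is a feasible alternative route, but the delicate part---the disjoint-pair covariance---is not correctly sourced. Corollary \ref{bounds:mean} only bounds $\E{Z_k}$; it does not supply the cancellation that makes the disjoint-pair contribution close to $\E{X_M}^2$. In Janson's decomposition, the disjoint-pair correction $I_2(f,k,m)$ has a prefactor of order $n^2(km)^{-3/2}\E{N_{3+}(\tau_k)}\E{N_{3+}(\tau_m)}$ multiplied by a bracket of probability ratios; without proving that this bracket is $o(1) + O((k+m)/n)$ (uniformly, via the local limit theorem \eqref{LLTgen}, or more sharply via Janson's Lemma~6.2, quoted in Section~\ref{sec:distances} and exploited in \eqref{boundI3}), the naive estimate of this term summed over $M \leq m \leq k \leq n/2$ is $\Theta(n^2 (\log M)^2/M)$, which for \emph{fixed} $M$ is of order $n^2$, so Chebyshev fails. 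To make your approach rigorous you would have to carry out the $I_1, I_2, I_3$ decomposition explicitly for $f(\mathbb T) = N_{3+}(\mathbb T)\,\mathbf{1}_{|\mathbb T| \geq M}$ and establish that cancellation; as written, the crucial ingredient is neither identified nor proved. The paper's route is therefore not just different but materially simpler: it circumvents the covariance machinery entirely.
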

\begin{proof} If $p_0 + p_1 + p_2 = 1$, the theorem is obviously correct. Thus, we assume $p_3 + p_4 + \ldots > 0$. 
For $1 \leq k \leq n$, $n \in I$, define $W_k = \sum_{v \in [n], N(v) = k} N_{3+}(v)$. By construction, $|\mathcal V_3(\tau_n)| \leq \sum_{k=M}^n W_k$. Let $\omega_n = o(n^{1/4}) $ be an integer sequence tending to infinity and, for $n \geq 0$, denote by
$\mathcal N_{3+}(n)$ a generic random variable with the distribution of $N_{3+}$ in $\tau_n$. (We abbreviate $\mathcal N_{3+}(n) = 0$ if $n \notin I$.)

Then, for $n$ sufficiently large, using Corollary \ref{logbound}, for some $C_1 > 0$, 
\begin{align*}
\Prob{ \sum_{k=\omega_n}^n W_k  \geq  \gamma n/2 } \leq \frac{2}{n \gamma} \sum_{k = \omega_n}^n \E{W_k} & = \frac{2}{n \gamma} \sum_{k = \omega_n}^n \E{Z_k} \E{\mathcal N_{3+}(k)} \\
& \leq \frac{2 C_1}{n \gamma} \sum_{k = \omega_n}^n \E{Z_k} \log k = O \left( \frac{\log \omega_n}{\sqrt{\omega_n}}\right).
\end{align*} 
Let $Y_1(k), Y_2(k), \ldots$ be independent copies of $\mathcal N_{3+}(k)$ and $W_k^* = \sum_{j =1}^{2 \E{Z_k}} Y_{j}(k)$. Then
\begin{align} 
\Prob{ \sum_{k=M}^{\omega_n} W_k  \geq \gamma n/2 } & \leq \mathbf{1} _{\left [ \gamma n / 8, \infty \right)}\left(\sum_{k = M}^{\omega_n} \E{Z_k} \E{\mathcal N_{3+}(k)} \right) \nonumber \\ 
&+ \omega_n \sup_{M \leq k \leq \omega_n} \Prob{W_k \geq 2 \E{W_k^*}}, \label{e1}
\end{align}
and,
\begin{align*}
\Prob{W_k \geq 2 \E{W_k^*}} & \leq \Prob{W_k \geq 2 \E{W_k^*}, Z_k \leq 2 \E{Z_k}} +  \Prob{Z_k \geq 2 \E{Z_k}}\\
&  \leq \Prob{W_k^* \geq 2 \E{W_k^*}} +  \Prob{Z_k \geq 2 \E{Z_k}}. 
\end{align*}
We use Chebyshev's inequality to bound both summands in the last expression. By \eqref{variance}, for some constant $C_2 > 0$ and all $M \leq k \leq \omega_n$, $k \in I_n$,
\begin{align*}
\Prob{Z_k \geq 2 \E{Z_k}} \leq \frac{\text{Var}(Z_k)}{\E{Z_k}^2} \leq C_2 \frac{\omega_n^3}{n}.
\end{align*}
Similarly, for some $C_3 > 0$,
\begin{align*}
 \Prob{W_k^* \geq 2 \E{W_k^*}} \leq  \frac{\text{Var}(W_k^*)}{\E{W_k^*}^2} = \frac{\E{\mathcal N_{3+}(k)^2}}{2 \E{Z_k} \E{\mathcal N_{3+}(k)}^2} \leq  C_3 \frac{\omega_n^2}{n}.
\end{align*}
Here, we have used the fact that $\liminf_{n \to \infty} \E{\mathcal N_{3+}} > 0$. Hence, the second summand in \eqref{e1} converges to zero as $n \to \infty$. By Corollaries \ref{logbound} and \ref{subtreebounds}, there exists a constant $C_4>0$ (depending on the offspring distribution but not on $M$) such that 
$\sum_{k = M}^{\omega_n} \E{Z_k} \E{\mathcal N_{3+}(k)} \leq C_4 n \log M / \sqrt{M}$. Choosing $M$ large enough such that $c_5 := 8 C_4 \log M / \sqrt{M}< 1$, the first assertion follows for any $\gamma \in (c_5, 1)$. Having picked $M$ and $\gamma$, we then have $$\lim_{n \to \infty} \Prob{B_n \leq n \frac{1-\gamma}{M-1}} = 0,$$
which proves the theorem.  
\end{proof}
We showed that with high probability, the heavy binary tree is larger than a positive constant times $n$. It implies that, under the equiprobable model of a random Apollonian tree, the longest simple path is with high probability $\Omega(n)$. This shows \eqref{conjfrieze}.

\section{Distances} \label{sec:distances}
The aim of this section is to give precise statements and proofs of the results claimed in Section \ref{sec:mainresults} on maximal distances. In particular, we will show that, under appropriate conditions, the maximal distance from the $2$-heavy tree is $\Theta(n^{1/3})$, a result that cannot possibly be deduced from the standard continuum random tree result for conditional Galton-Watson trees \cite{al1, al2, al3, legallrandomtrees}.

Let $\mathbb T$ be a ordered rooted tree. For $\mathcal V' \subseteq \mathcal V(\mathcal T)$, define 
$$\max  \mathcal V' := \max_{v \in \mathcal V(\mathbb T)} \text{dist}(v, \mathcal V'),$$
where $\text{dist}(\cdot, \cdot)$ denotes the graph distance on $\mathbb T$.
The main purpose of this section is to show the following result. Here, we write $A_k$ for the set of full $k$-ary subtrees containing the root.

\begin{thm} \label{thm:maindist}
 (i) Let $k \geq 3$. If $\E{\xi^{k+1}} < \infty$, then, for any $\varepsilon > 0$, there exists $C_1 > 0$ such that, for all $n \in I$, 
\begin{align} \label{bound:lowM}  \Prob{\max_{v \in [n]} N_k(v) \leq C_1n^{2/k}} \geq 1- \varepsilon. \end{align} 
If $\E{\xi^{k+1}} < \infty$ and $\sum_{\ell \geq k} p_\ell > 0$, then, for any $\varepsilon > 0$, there exists $c_1 > 0$ such that, for all $n \in I$, 
\begin{align} \label{bound:lowM2}  \Prob{\max_{v \in [n]} N_k(v) \geq c_1n^{2/k}} \geq 1- \varepsilon. \end{align} 
One can replace $N_k(v)$ by $N_{k+}(v)$ in \eqref{bound:lowM} upon possibly increasing $C_1$ if $\E{\xi^{(3k+1)/2}} < \infty$.

(ii)  Let $k \geq 2$. If  $\E{\xi^{k+3}} < \infty$, then, for any $\varepsilon > 0$,  there exists a constant $C_2 > 0$, such that, for all $n \in I$,
\begin{align}  \label{bound:lowH} 
\Prob{\max \mathcal V(\{1, \ldots, k\}^*) \leq C_2 n^{1/(k+1)}} \geq 1- \varepsilon,
\end{align}
If $\E{\xi^{k+2}} < \infty$ and $\sum_{\ell \geq k+1} p_\ell > 0$, then, for any $\varepsilon > 0$, there exists $c_2 > 0$ such that, for all $n \in I$, 
\begin{align}  \label{bound:lowsup} 
\Prob{\inf_{T \in A_{k}} \max \mathcal V(T) \geq c_2n^{1/(k+1)}} \geq 1- \varepsilon.
\end{align}

In other words, under the assumptions imposed, 
the sequences $n^{-1/(k+1)} \max \mathcal V(\{1, \ldots, k\}^*),$    $n^{-1/(k+1)}$   $ \inf_{T \in A_{k}}$ $\max \mathcal V(T)$ and $n^{-2/k} \max_{v \in [n]} N_k(v)$ as well as their reciprocals are tight. 
\end{thm}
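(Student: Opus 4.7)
The plan addresses the four claims in order, combining first- and second-moment arguments with Theorems \ref{thm:tailbounds}, \ref{thm:tailbounds2} and the height tail bound \eqref{boundh}. I write $X^{(j)} = X(\tau_j)$ for any tree functional $X$.

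For \eqref{bound:lowM} I would estimate the first moment
\[ \E{|\{v \in [n]: N_k(v) \geq t\}|} = \sum_j \E{Z_j} \Prob{N_k^{(j)} \geq t}. \]
Since $N_k^{(j)} \leq (j-1)/k$ deterministically, the summand vanishes unless $j \geq kt$, while Theorem \ref{thm:tailbounds} provides $\Prob{N_k^{(j)} \geq t} \leq \beta_k t^{(1-k)/2}$ uniformly in $j$. Combined with $\sum_{j \geq kt} \E{Z_j} = O(n/\sqrt{t})$ from Corollary \ref{bounds:mean}, this yields a first moment $O(n t^{-k/2})$, which is $O(C_1^{-k/2})$ when $t = C_1 n^{2/k}$; the $N_{k+}$ case is handled identically using the $N_{k+}$ statement of Theorem \ref{thm:tailbounds}. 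For \eqref{bound:lowM2} I would mirror this using the matching lower bound from Theorem \ref{thm:tailbounds2} together with the lower bound on $\E{Z_j}$ in Corollary \ref{bounds:mean}, obtaining $\E{|\{v: N_k(v) \geq c_1 n^{2/k}\}|} \geq c c_1^{-k/2}$, and then invoke a second-moment argument---using the uniform variance estimate \eqref{variance} together with the near-independence of $N_k$-events across disjoint fringe subtrees of comparable size---to convert this via Paley--Zygmund into the desired probability bound.

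The main obstacle is \eqref{bound:lowH}. The starting point is the deterministic inequality $\max \mathcal V(\{1,\ldots,k\}^*) \leq 1 + \max_{w \in [n]} H_{k+}(w)$, where $H_{k+}(w)$ denotes the maximum height among the children of $w$ whose rank exceeds $k$. Conditioning on $N(w)=j$ and on the subtree sizes, those children are independent conditional Galton--Watson trees, and \eqref{boundh} yields
\[ \Prob{H_{k+}(w) \geq t \mid N(w)=j} \leq \E{\xi_\epsilon \exp(-\delta t^2/N_{k+1}^{(j)})}. \]
Applying the layer-cake representation with the refined tail bound $\E{\xi_\epsilon \mathbf{1}_{\{N_{k+1} \geq a\}}} \leq \beta' a^{-k/2}$ (cf.\ Theorem \ref{thm:tailbounds}, which requires $\E{\xi^{k+3}} < \infty$), I would split the resulting sum over $j$ at $j \asymp t^2$. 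For $j > t^2$, the tail bound combined with $\sum_{j > t^2}\E{Z_j} = O(n/t)$ from Corollary \ref{bounds:mean} contributes $O(n t^{-(k+1)})$. For $j \leq t^2$, the deterministic cap $N_{k+1}^{(j)} \leq (j-1)/(k+1)$ forces the layer-cake integrand to carry an extra factor $\exp(-c t^2/j)$; summing against $\E{Z_j} \leq C n/j^{3/2}$ and substituting $u = t^2/j$ reduces this contribution to a finite gamma integral, again yielding $O(n t^{-(k+1)})$. Consequently $\Prob{\max_w H_{k+}(w) \geq t} = O(n t^{-(k+1)})$, and the choice $t = C_2 n^{1/(k+1)}$ closes the argument. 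The delicate point is squeezing out this extra factor $1/t$ beyond the naive estimate $O(n t^{-k})$; without exploiting the deterministic cap in the small-$j$ regime one is left with a spurious $\sqrt{\log n}$ factor.

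For the lower bound \eqref{bound:lowsup} I would combine the first-moment/Paley--Zygmund argument above with the height lower bound \eqref{convH}. Applying Theorem \ref{thm:tailbounds2} to $N_{k+1}$ together with a second-moment calculation gives, with probability at least $1 - \varepsilon/2$, a node $w \in [n]$ with $N_{k+1}(w) \geq c n^{2/(k+1)}$. Conditionally on the sizes of the top $k+1$ subtrees of $w$, those subtrees are independent conditional Galton--Watson trees of sizes at least $c n^{2/(k+1)}$, and by \eqref{convH} each has height at least $c' n^{1/(k+1)}$ with probability bounded below uniformly in $n$; hence with probability $\geq 1 - \varepsilon/2$ all $k+1$ of them do. For any $T \in A_k$, $T$ retains at most $k$ of $w$'s children, so if $w \in T$ at least one of these $k+1$ tall subtrees lies outside $T$, while if $w \notin T$ the entire subtree rooted at $w$ lies outside $T$; either way, a node at distance $\geq c' n^{1/(k+1)}$ from $T$ is produced.
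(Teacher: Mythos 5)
Your overall strategy tracks the paper's closely: first-moment bounds for the two upper estimates, a second-moment argument for \eqref{bound:lowM2}, and a reduction of \eqref{bound:lowsup} to \eqref{bound:lowM2} plus the height distribution. The upper bounds and the reduction step are essentially the paper's Propositions~\ref{upperboundM} and~\ref{upperboundH} and the short argument at the end of the proof, so those parts are fine (though you should note that the contribution from heavy-path nodes, for which $N(v)$ can be close to $n$ and $\E{Z_j}$ is not $O(nj^{-3/2})$, must be controlled separately; the paper does this via $\E{|\mathcal V(1^*)|}=O(\sqrt n)$).

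The genuine gap is in the second-moment step for \eqref{bound:lowM2}. You invoke ``the uniform variance estimate \eqref{variance} together with the near-independence of $N_k$-events across disjoint fringe subtrees,'' but \eqref{variance} only bounds $\mathrm{Var}(Z_k)$; it says nothing about the variance of the functional you actually need, namely a count of nodes $v$ with $N_k(v)\geq t$, which involves a non-trivial tree statistic $f(\mathbb T_v)$ rather than a pure subtree-size indicator. The paper's proof hinges on Janson's covariance decomposition (Lemmas 6.1 and 6.2 of \cite{janson2013}), which splits $\mathrm{Cov}(F(f_k,\tau_n),F(f_m,\tau_n))$ into the nested-subtree term $I_1$ and the cancellation term $I_2$, and the crucial bound $I_2(k,m)=O(n)$ (rather than $O(n^2)$) comes from a delicate cancellation, not from any naive ``near-independence.'' Moreover the nested term $I_1(g_\ell,k,m)$ has to be controlled by reapplying the tail bound of Theorem~\ref{thm:tailbounds} inside the double sum. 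Hand-waving this as ``near-independence plus \eqref{variance}'' misses the actual mechanism of the proof; without Janson's lemma (or an equivalent tool) the variance estimate does not close. Separately, in your argument for \eqref{bound:lowsup}, the sentence ``each has height at least $c'n^{1/(k+1)}$ with probability bounded below uniformly in $n$; hence with probability $\geq 1-\varepsilon/2$ all $k+1$ of them do'' does not follow as stated---a uniformly-bounded-below probability gives you only $p^{k+1}>0$, not something close to $1$. The fix (used in the paper) is that the probability can be made arbitrarily close to $1$ by shrinking $c'$, after which $(1-\varepsilon)^{k+1}$ suffices; you should make that explicit.
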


\medskip Let us briefly discuss the result. First of all, the lower bounds \eqref{bound:lowM2} and \eqref{bound:lowsup} are much harder to obtain than the upper bounds \eqref{bound:lowM} and \eqref{bound:lowH}, where  \eqref{bound:lowsup} follows very easily from \eqref{bound:lowM2} from known tail bounds on the height of $\tau_n$ (see inequality \eqref{boundh} in the introduction). Second, on the one hand,
\eqref{bound:lowsup} says that \emph{every} $k$-ary subtree of $\tau_n$ (and not only the $k$-heavy tree $\mathcal V(\{1, \ldots, k\}^*)$) leaves out nodes of distance order $n^{1/k}$ away. On the other hand, \eqref{bound:lowH} shows that the 
 $k$-heavy tree   exhausts $\tau_n$ in an asymptotically optimal way. Third, in light of Theorem \ref{thm:tailbounds2}, the moment conditions imposed in 
\eqref{bound:lowM2} and \eqref{bound:lowsup} are somewhat unexpected. Indeed, we believe that these results are valid under a finite variance assumption on the offspring distribution. However, since our proof relies on the second moment method and involves suitable bounds on variances, we cannot remove these conditions.

\medskip The next proposition gives estimates on the sizes of sets which are, in a certain sense, \emph{close} to the heavy path. In order to make this more precise, we need to introduce some more notation.
If $\{k+\}$ denotes $\{k,k+1, \ldots \}$, then $\mathcal V(\{k + \}^*)$ is the vertex set of the subtree of $\mathbb T$ that avoids any node of index strictly smaller than $k$ (the root excepted). We are also interested in $$\mathcal V(\underbrace{\mystrut{1.0ex} 1^*21^*2\ldots 1^*2}_{k \ \text{pairs} \ {1^*2}} 1^*) = \mathcal V\left((1^*2)^k 1^*\right),$$
where $(s)^k$ denotes $k$ repetitions of a sequence $s$. We use the simplified notation $|A|$ to denote the number of nodes $v \in \mathcal V(\mathbb T)$ with $\kappa(v) \in A$.  Note that we have the following disjoint decompositions, 
\begin{align*}
 \mathcal V(\{1,2\}^*) =  \bigcup_{k = 0}^\infty \mathcal V\left((1^*2)^k 1^*\right), \quad
\mathcal V(\mathbb T)  = \mathcal V\left(\{1,2\}^*\right)  \cup \bigcup_{k=0}^\infty \mathcal V\left((1^*2)^k 1^*({3+}) \N^*\right) .
\end{align*}
From now on, we apply these definitions to the tree $\tau_n$. From Theorem \ref{thm:meanbinary}, we know that, \sloppy $\E{|\mathcal V(\{1,2\}^*)|} = \Omega(n)$. Obviously, $\E{ | [n]  \backslash \mathcal V(\{1,2\}^*)|} = \Omega(n)$ if $p_0 +p_1 + p_2  < 1$.  In the next section, we will determine the first order behaviour of $\mathcal V(1^*)$: from Theorem \ref{conv:hp} (or, also from \eqref{boundh}), it follows that $\E{|\mathcal V(1^*)|} = \Theta(\sqrt{n})$.
\begin{prop} \label{thm:general}
As $n \to \infty$, $n \in \N h +1$, 
\begin{itemize}
\item [(i)] if $\E{\xi^{7/2}} < \infty$, then $\E{| \mathcal V\left((1^*2)^k 1^*\right)|} = \Theta(\sqrt{n} \log^k n), k \geq 1$, 
\item  [(ii)] if $\E{\xi^{5}} < \infty$, then $\E{| \mathcal V\left((1^*2)^k 1^*({3+}) \N^*\right)|} = O(\sqrt{n} \log^{k+1} n), k \geq 0$,
\item [(iii)] if $\E{\xi^{13/2}} < \infty$, then $\E{| \mathcal V\left((1^*2)^k 1^*({4+}) \N^*\right)|} = O(\sqrt{n})$, $k \geq 0$, and
\item  [(iv)] if $\E{\xi^{3}} < \infty$, then $\E{ |\mathcal V(\{2+\}^*)|} = O(1).$
\end{itemize}
If $\sum_{\ell \geq 3} p_\ell > 0$, then big-$O$ in (ii) can be replaced by $\Theta$. Similarly, big-$O$ in (iii) becomes $\Theta$ if $\sum_{\ell \geq 4} p_\ell > 0$.
\end{prop}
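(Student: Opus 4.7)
The plan is to handle (i)--(iii) by a unified recursive argument along the heavy path, and (iv) by a depth decomposition modelled on Kesten's tree. Throughout I set $A_k := (1^*2)^k 1^*$.

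For (i), let $b_k(n) := \E{|\mathcal V(A_k)(\tau_n)|}$. The first step is to unroll at the root: an index sequence in $A_k$ either starts with $1$ (and continues in $A_k$) or starts with $2$ (and must continue in $A_{k-1}$), which gives $|\mathcal V(A_k)(\tau_n)| = |\mathcal V(A_k)(T_1)| + |\mathcal V(A_{k-1})(T_2)|$, where $T_i$ is the subtree of the $i$-th largest child of the root (or empty). Conditional on $(N_1, N_2)$ the pair $(T_1, T_2)$ consists of two independent conditional Galton--Watson trees of the prescribed sizes, so $b_k(n) = \E{b_k(N_1)} + \E{b_{k-1}(N_2)}$. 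Iterating along the heavy path and using the standard fringe identity (for any $u$, $T_u$ conditional on $N(u) = m$ has the law of $\tau_m$), the recursion should unfold into
\[
b_k(n) = \sum_m \E{Z_m^{\mathrm{heavy}}} \cdot \E{b_{k-1}(N_2(\tau_m))},
\]
where $Z_m^{\mathrm{heavy}} \in \{0,1\}$ indicates a heavy-path node with subtree size $m$, so $\sum_m \E{Z_m^{\mathrm{heavy}}} = \E{L_n} = \Theta(\sqrt n)$ (upper bound from \eqref{boundh}). An induction on $k$ then closes the argument: the tail bound $\Prob{N_2 \geq t} \leq \beta_2 t^{-1/2}$ from Theorem \ref{thm:tailbounds} combined with the inductive hypothesis $b_{k-1}(m) = O(\sqrt m \log^{k-1} m)$ yields $\E{b_{k-1}(N_2(\tau_m))} = O(\log^k m)$ via a short partial integration, so $b_k(n) = O(\sqrt n \log^k n)$. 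Matching lower bounds will come from the $\Theta$ statements of Corollary \ref{subtreebounds}(i).

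For (ii) and (iii), the combinatorics collapse further: $w \in \mathcal V(A_k (p+) \N^*)$ iff some ancestor $v$ satisfies $\kappa(v) \in A_k \cdot (p+)$, which gives
\[
|\mathcal V(A_k (p+) \N^*)(\tau_n)| = \sum_{u \in \mathcal V(A_k)(\tau_n)} N_{p+}(u), \qquad p \in \{3, 4\}.
\]
Taking expectations and applying the fringe identity inside each $T_u$ reduces the right-hand side to $\sum_m \E{Z_m^{(k)}} \E{N_{p+}(\tau_m)}$ with $Z_m^{(k)} := |\{u \in \mathcal V(A_k) : N(u) = m\}|$, so $\sum_m \E{Z_m^{(k)}} = b_k(n)$. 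Plugging in $\E{N_{3+}(\tau_m)} = O(\log m)$ from Corollary \ref{subtreebounds}(iv) and $\E{N_{4+}(\tau_m)} = O(1)$ from Corollary \ref{subtreebounds}(v), and bounding $\log m \leq \log n$ where needed, should yield the upper bounds of (ii) and (iii); the $\Theta$ strengthenings under the positivity assumptions will follow from the corresponding $\Theta$ versions of these corollaries.

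For (iv), let $M(T) := |\mathcal V(\{2+\}^*)(T)|$ and decompose $M = \sum_{d \geq 0} X_d$ by depth, so that the recursion $M(\tau_n) = 1 + \sum_{i \geq 2} M(T_{v_i(\epsilon)})$ becomes $\E{X_d(\tau_n)} = \sum_{i \geq 2} \E{f_{d-1}(N_i)}$ with $f_{d-1}(m) = \E{X_{d-1}(\tau_m)}$. The guiding calculation is that in Kesten's tree $\mathcal T_\infty$ the analogue decays geometrically, $\E{X_d(\mathcal T_\infty)} = \sigma^2 p_0^{d-1}$ for $d \geq 1$, since off-spine subtrees are independent unconditional Galton--Watson trees whose roots have $\E{\max(0, \xi - 1)} = p_0$ rank-$\geq 2$ children on average. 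My aim would be to transfer this decay uniformly in $n$, namely $\E{X_d(\tau_n)} \leq C p_0^{d-1}$, by induction on $d$: the base case uses the uniform bound on $\E{\xi_\epsilon(\tau_n)}$ readable from the formula in the proof of Proposition \ref{prop2}, while the inductive step uses the tails $\Prob{N_i \geq t} = O(t^{-(i-1)/2})$ from Theorem \ref{thm:tailbounds}. Summing over $d$ (with $p_0 < 1$) will then give $\E{M(\tau_n)} = O(1)$. The main obstacle will be precisely this uniform geometric decay: the naive estimate $\E{X_d(\tau_n)} \leq \E{\max(0, \xi_\epsilon(\tau_n) - 1)} \cdot \sup_m f_{d-1}(m)$ has a prefactor tending to $\sigma^2$ and so does not contract when $\sigma^2 \geq 1$. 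The way around should exploit that the size-biasing is localized to the root: splitting at a threshold $M$, the $\{N_i \leq M\}$ contribution is governed by the original (non-size-biased) offspring distribution and therefore contracts by $p_0 < 1$, while the $\{N_i > M\}$ contribution is absorbed as a small correction from the polynomial tails of Theorem \ref{thm:tailbounds}.
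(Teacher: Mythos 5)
Your recursion at the root, $b_k(n)=\E{b_k(N_1)}+\E{b_{k-1}(N_2)}$, is the same one the paper uses, but the two proofs then diverge: the paper closes each level $k$ by a direct induction on $n$, using the sharp two-sided estimate $\E{\sqrt{N_1}\log^j N_1}=\sqrt n\log^j n-\Theta(\log^j n)$ (Lemma~\ref{lem:augment}(iii)) to absorb the $\E{b_{k-1}(N_2)}$ correction, whereas you unroll the recursion along the heavy path and bound the resulting sum $\sum_m\E{Z_m^{\mathrm{heavy}}}\,\E{b_{k-1}(N_2(\tau_m))}$. For the upper bounds in (i) and the transfer to (ii), this is a legitimate alternative. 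For the lower bound in (i) with $k>1$, however, Corollary~\ref{subtreebounds}(i) (which only gives $\E{\sqrt{N_2}}=\Theta(\log n)$) is not enough: you need $\E{\sqrt{N_2}\log^{k-1}N_2}=\Theta(\log^k n)$, which is exactly what Lemma~\ref{lem:augment}(i) supplies, and you additionally need a quantitative statement that $\Omega(\sqrt n)$ heavy-path nodes retain subtree size $\geq n^{\varepsilon}$ in order for the unrolled sum to match the target order. Neither of these is assembled in the proposal. There is also a sign-of-trouble you should notice in (iii): your scheme gives $\sum_m\E{Z_m^{(k)}}\E{N_{4+}(\tau_m)}=O(1)\cdot b_k(n)=O(\sqrt n\log^k n)$, which does not match the stated $O(\sqrt n)$ for $k\geq 1$; the paper only sketches (iii) as ``same lines'' and any careful reader should flag this (it looks like a typo in the statement, with the intended bound being $O(\sqrt n\log^k n)$, but this should not be silently waved away).

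For (iv) you propose a genuinely different route: decompose $|\mathcal V(\{2+\}^*)|$ by depth and prove geometric decay $\E{X_d(\tau_n)}\leq Cp_0^{d-1}$ uniformly in $n$. You correctly identify the core obstruction --- the na\"ive recursion contracts with factor $\E{\max(0,\xi_\epsilon(\tau_n)-1)}\to\sigma^2$, which is not $<1$ in general --- but the proposed escape (``splitting at a threshold $M$, the $\{N_i\leq M\}$ contribution is governed by the original offspring distribution'') is left at the level of a hope rather than a proof. The issue is that the off-heavy subtrees of a node at depth $d\geq 1$ in $\tau_n$ are still fringe trees of the \emph{conditioned} tree, so their roots are themselves size-biased; iterating the depth recursion does not give you unconditional offspring at any single level. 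The paper's proof circumvents this by cutting in \emph{subtree size} instead of depth: it only needs to bound the part of $\mathcal V(\{2+\}^*)$ with subtree size $\leq L$ (with each such node charged at most $L$ further descendants), and for that piece the comparison to a branching process with mean $q-1<1$ is obtained from the local limit $\xi_\epsilon\to\zeta$ together with the observation that, by Proposition~\ref{prop2}, for $L$ large only one child of a typical node has subtree size $\geq L$ in expectation. Your depth-decomposition might ultimately be made to work, but as written it has a genuine unresolved gap that the size-cutoff argument avoids.
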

Together with Lemma \ref{lem:maxsub} below, the proposition shows the following result:  for any fixed $M \in \N$, with high probability, the index sequence of every node $v \in [n]$ with extraordinarily large $k$-th subtree size contains at least $M$ entries different from $1$.

The proof of the proposition is worked out in Appendix B. The rest of this section is devoted to the proof of Theorem \ref{thm:maindist}.

\subsection{Upper bounds} \label{sec:up}
 For a node $ v \in [n]$ with index sequence $\kappa(v)$, we define
$H^*(\kappa(v)) := H(v)$ and $N^*(\kappa(v)) := N(v)$. Here, we recall that $H(v)$ denotes the height of the subtree
rooted at $v$ in $\tau_n$. (We also abbreviate $H^*(y) = N^*(y) = 0$ for $y \in \N^*$ when $\kappa(v) \neq y$ for all nodes $v \in [n]$.)

\begin{lem} \label{lem:conheight}
Let $k \geq 2$ and $\E{\xi^{k + 2}} < \infty$. Then, there exists a constant $C > 0$ such that, for all $n \in I$ and $t \geq 1$, 
$$\Prob{ \sup_{\ell \geq k} H^*(\ell) \geq t} \leq C t^{1-k}.$$
\end{lem}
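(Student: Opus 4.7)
The plan is to condition on $\xi_\epsilon$ and the ordered subtree sizes $N_1,\ldots,N_{\xi_\epsilon}$ of the root, use the sub-Gaussian height bound \eqref{boundh} for each subtree of a child of the root, and then convert the resulting Laplace-type expression in $N_k$ into a weighted tail integral via Fubini.

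Conditionally on $\xi_\epsilon$ and $(N_1,\ldots,N_{\xi_\epsilon})$, the subtrees hanging off the root are independent conditional Galton-Watson trees of the prescribed sizes. In particular, for every rank $\ell$ with $k \leq \ell \leq \xi_\epsilon$, the bound \eqref{boundh} gives
\[
\Prob{H^*(\ell) \geq t \,\big|\, \xi_\epsilon, N_1,\ldots,N_{\xi_\epsilon}} \leq \exp\!\left(-\delta t^2/N_\ell\right).
\]
Since $N_\ell \leq N_k$ whenever $\ell \geq k$, a union bound followed by taking expectations yields
\[
\Prob{\sup_{\ell \geq k} H^*(\ell) \geq t} \leq \E{\xi_\epsilon \exp\!\left(-\delta t^2/N_k\right)},
\]
where the exponential is interpreted as $0$ on $\{N_k = 0\}$.

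The crux is to bound this expectation by a constant times $t^{1-k}$. For this I would use the Laplace-type identity
\[
\exp\!\left(-\delta t^2/N_k\right) = \int_0^\infty e^{-u} \mathbf{1}_{\{N_k \geq \delta t^2/u\}}\, du,
\]
(which is valid on $\{N_k \geq 1\}$ and also gives $0$ on $\{N_k = 0\}$), and apply Fubini to obtain
\[
\E{\xi_\epsilon \exp\!\left(-\delta t^2/N_k\right)} = \int_0^\infty e^{-u} \,\E{\xi_\epsilon \mathbf{1}_{\{N_k \geq \delta t^2/u\}}}\, du.
\]
Under the hypothesis $\E{\xi^{k+2}} < \infty$, Theorem~\ref{thm:tailbounds} supplies a constant $C_0 > 0$ such that $\E{\xi_\epsilon \mathbf{1}_{\{N_k \geq x\}}} \leq C_0 x^{(1-k)/2}$ for all $x \geq 1$ and $n \in I$. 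Splitting the integral at $u = \delta t^2$ and using this tail bound on $[0,\delta t^2]$ together with the trivial estimate $\E{\xi_\epsilon \mathbf{1}_{\{N_k \geq \delta t^2/u\}}} \leq \E{\xi_\epsilon}$ on $(\delta t^2,\infty)$ (where $\E{\xi_\epsilon}$ is uniformly bounded in $n$ by Proposition~\ref{prop2}, since $\E{\xi^2} < \infty$), one finds
\[
\E{\xi_\epsilon \exp\!\left(-\delta t^2/N_k\right)} \leq C_0 (\delta t^2)^{(1-k)/2}\, \Gamma\!\left(\tfrac{k+1}{2}\right) + \E{\xi_\epsilon}\, e^{-\delta t^2},
\]
which is $O(t^{1-k})$ uniformly in $n \in I$ and $t \geq 1$.

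The conceptual point is the Laplace identity, which converts $\E{\xi_\epsilon e^{-\delta t^2/N_k}}$ into a weighted integral of the joint tail $\E{\xi_\epsilon \mathbf{1}_{\{N_k \geq \cdot\}}}$ without the logarithmic loss incurred by naive threshold splits of $N_k$. The moment condition $\E{\xi^{k+2}} < \infty$ is exactly what makes Theorem~\ref{thm:tailbounds} applicable with the required $x^{(1-k)/2}$ decay; without the extra factor of $\xi_\epsilon$ inside the indicator one would only obtain $t^{(1-k)/2}$, falling short of the target $t^{1-k}$.
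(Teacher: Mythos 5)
Your proof is correct and follows essentially the same route as the paper's: condition on $\xi_\epsilon$ and the subtree sizes, union-bound, apply the sub-Gaussian height bound \eqref{boundh}, and then convert the resulting Laplace expectation $\E{\xi_\epsilon e^{-\delta t^2/N_k}}$ into a tail integral that is handled by Theorem~\ref{thm:tailbounds}. The one cosmetic difference is in the packaging of that last step: you invoke the already-stated bound on $\E{\xi_\epsilon \mathbf{1}_{\{N_k\ge t\}}}$ from Theorem~\ref{thm:tailbounds} together with the Laplace identity $e^{-a}=\int_0^\infty e^{-u}\mathbf{1}_{\{u\ge a\}}\,du$, whereas the paper expands the expectation as $\sum_{\ell\ge k}\int_0^\ell \Prob{N_k \ge -\delta t^2/\log(s/\ell),\ \xi_\epsilon=\ell}\,ds$ and appeals to the refined per-$\ell$ bound \eqref{tailrem}, carrying out the $\ell$-sum explicitly. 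After the substitution $u=-\log s$ the two integrals coincide, and summing \eqref{tailrem} against $\ell$ reproduces exactly the $\E{\xi_\epsilon\mathbf{1}_{\{N_k\ge\cdot\}}}$ bound you use, so the ingredients are identical; your version is a touch cleaner in that it makes the lemma a direct corollary of the stated part of Theorem~\ref{thm:tailbounds} rather than of the stronger remark \eqref{tailrem}, which the paper singles out as being needed only here.
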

\begin{proof} Let $\{\mathcal H_{i}(n) : n \in I, i \geq 1 \}$ be a family of independent random variables where each $\mathcal H_i(n)$ is distributed like the height of $\tau_n$. Furthermore, assume that the family is independent of $\tau_n$. Using \eqref{boundh}, we have
\begin{align*}
\Prob{ \sup_{\ell \geq k} H^*(\ell) \geq t} & = \E{ \Prob{\sup_{k \leq \ell \leq \xi_\epsilon} H^*(\ell) \geq t \Bigg | \xi_\epsilon, N_1, N_2, \ldots}} \\
& \leq \E{\xi_\epsilon \sup_{k \leq \ell \leq \xi_\epsilon} \Prob{\mathcal H_{\ell}(N_\ell) \geq t \bigg | \xi_\epsilon, N_1, N_2, \ldots}} \\
& \leq \E{\xi_\epsilon \exp(-\delta t^2 / N_k)} \\
& = \sum_{\ell = k}^\infty \int_0^\ell  \Prob{N_k \geq - \delta t^2 / \log (s/\ell), \xi_\epsilon = \ell} ds.
\end{align*}
By inequality \eqref{tailrem} in the remark following Theorem \ref{thm:tailbounds}, there exists $C_1 >0$ such that the right-hand side of the last display is bounded from above by  
\begin{align*}
 \sum_{\ell = k}^\infty C_1 p_\ell \ell^{k+2}  \int_0^1   \left(-\frac{\delta t^2}{\log s}\right)^{(1-k)/2} ds \leq C_1 \Gamma((k+1)/2) \E{\xi^{k+2}} \delta^{(1-k)/2} t^{1-k}.
\end{align*}
Here, $\Gamma(x) = \int_0^\infty e^{-t} t^{x-1} dt$ denotes the Gamma function. This concludes the proof.
\end{proof}

\begin{prop} \label{upperboundM} Let $k \geq 2$ and $\E{\xi^{k+1}} < \infty$. Then, there exists a constant $C > 0$ such that, for all $n \in I$ and $t \geq 1$,
\begin{align*}
\Prob{\max_{v \in [n]} N_k(v) \geq t} \leq C \frac{n}{t^{k/2}}. \end{align*}
The bound also holds for 
$N_{k+}(v)$ if $\E{\xi^{(3k+1)/2}} < \infty$ upon possibly increasing $C$.

\end{prop}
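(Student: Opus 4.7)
The plan is a first-moment (Markov) argument combined with the fringe-tree property that makes every subtree of $\tau_n$ of size $m$ a fresh copy of $\tau_m$, together with the tail bound of Theorem \ref{thm:tailbounds} and the subtree-count estimates of Corollary \ref{bounds:mean}. First I would write
\begin{align*}
\Prob{\max_{v \in [n]} N_k(v) \geq t}
\leq \E{|\{ v \in [n] : N_k(v) \geq t\}|}
= \sum_{m = 1}^{n} \E{Z_m} \, \Prob{N_k \geq t \mid |\tau_n| = m},
\end{align*}
where, by the standard fringe-tree identity, $N_k$ restricted to a subtree of size $m$ rooted at some $v \in [n]$ has the same law as $N_k$ in $\tau_m$. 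Since $N_k(v) \leq N(v) - 1$ deterministically, only the terms with $m \geq t$ contribute, so the right-hand side equals $\sum_{m \geq t} \E{Z_m}\, \Prob{N_k(\tau_m) \geq t}$.

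Next I would apply Theorem \ref{thm:tailbounds} uniformly in $m \in I$ to obtain $\Prob{N_k(\tau_m) \geq t} \leq \beta_k t^{(1-k)/2}$, reducing the task to bounding $\sum_{m \geq t} \E{Z_m}$ by $O(n/\sqrt{t})$. This is the routine part: split the range at $m = n_0$, $m = n/2$, and $m = n - n_0$, and use the three upper bounds from Corollary \ref{bounds:mean}. Specifically,
\begin{align*}
\sum_{m = t \vee n_0}^{\lfloor n/2 \rfloor} \E{Z_m}
\;\leq\; 2\sqrt{2}\,\alpha\, n \sum_{m \geq t \vee n_0} m^{-3/2}
\;=\; O\!\left(\frac{n}{\sqrt{t \vee n_0}}\right),
\end{align*}
while the middle and top ranges contribute at most $O(\sqrt{n}) + O(n_0) = O(\sqrt{n})$, and for $t < n_0$ the trivial bound $\sum_m \E{Z_m} = n$ suffices (absorbed into the constant). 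Combining gives $\sum_{m \geq t} \E{Z_m} \leq C_0 n / \sqrt{t}$ for all $t \geq 1$, and hence
\begin{align*}
\Prob{\max_{v \in [n]} N_k(v) \geq t} \;\leq\; \beta_k C_0\, \frac{n}{t^{k/2}},
\end{align*}
as desired.

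For the $N_{k+}(v)$ version, the argument is identical word-for-word, except that I invoke the companion tail bound from Theorem \ref{thm:tailbounds} (the one requiring $\E{\xi^{(3k+1)/2}} < \infty$, with the larger constant $\beta_{k+}$ in place of $\beta_k$); the deterministic inequality $N_{k+}(v) \leq N(v) - 1$ still restricts the sum to $m \geq t$, so nothing else changes. The only step with any actual content is the split of the subtree-count sum, and there are no obstacles to anticipate: the $m^{-3/2}$ tail in Corollary \ref{bounds:mean} is precisely what supplies the extra $t^{-1/2}$ factor that upgrades $t^{(1-k)/2}$ to $t^{-k/2}$.
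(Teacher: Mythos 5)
Your proof is correct and follows essentially the same outline as the paper's: a first-moment bound, the fringe-tree identity plus the uniform tail bound from Theorem \ref{thm:tailbounds} to reduce to the subtree-count sum $\sum_{m \geq t}\E{Z_m}$, and Corollary \ref{bounds:mean} to estimate that sum. The only minor difference is at the top of the range: the paper caps the sum at $m < \lceil n/2 \rceil$ and observes that any node $v$ with $N(v) \geq \lceil n/2 \rceil$ lies on the heavy path, invoking $\E{|\mathcal V(1^*)|} = O(\sqrt{n})$ for the remainder; you instead run the sum all the way up to $n$ and use the third (trivial) bound $\E{Z_m} \leq \lfloor n/m \rfloor$ from Corollary \ref{bounds:mean} for $m$ near $n$. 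Both handle that range cleanly, and your version has the small advantage of not relying on a fact about $\mathcal V(1^*)$ proved elsewhere. The $N_{k+}$ variant is, as you say, word-for-word the same with $\beta_{k+}$ in place of $\beta_k$.
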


\begin{proof}
The left hand side is zero for $t \geq \lceil n/2 \rceil$. Thus, we assume $t \leq \lceil n/2 \rceil - 1$.
Note that, for all nodes $v \in [n]$ with $N(v) \geq \lceil n/2 \rceil$, we must have $v \in \mathcal V(1^*)$. Hence, there are at most $| \mathcal V(1^*)| $ many of them in the tree.
Thus, by Theorem \ref{thm:tailbounds}, 
\begin{align*}
\Prob{\max_{v \in [n]} N_k(v) \geq t} & \leq \E{ | \{v \in [n] :  N_k(v) \geq t \}|} \\
& = \sum_{i = 1}^n \Prob{N_k(i) \geq t} \\
&  \leq \beta_k t^{(1-k)/2} \E{| \{v \in [n] :  N(v) \geq t\}|} \\
& \leq  \beta_k t^{(1-k)/2}  \left( \sum_{\ell = t}^{\lceil n/2 \rceil - 1}  \E{Z_\ell} + \E{| \mathcal V(1^*)|  } \right) \\
&  \leq C_1 \left(\frac{n}{t^{k/2}}  + \frac{\sqrt{n}}{t^{(k-1)/2}} \right),
\end{align*}
where $C_1$ can be chosen independently of $t$ and $n$ by Corollary \ref{bounds:mean} and the fact that $\E{| \mathcal V(1^*)| } = O(\sqrt{n})$. The same argument applies to $N_{k+}(v)$.
\end{proof}

Proposition \ref{upperboundM} is sufficient to deduce the upper bound in \eqref{bound:lowM}. In order to transfer the result to distances, we need a tighter bound when restricting to nodes on the heavy path.

\begin{lem} \label{lem:maxsub}
Let $k \geq 2$ and $\E{\xi^{k+1}} < \infty$. Then, for any deterministic (possibly infinite) set $\mathcal A \subseteq \N^*$ and $t \geq 1$, 
\begin{align*}
\Prob{\max_{ v \in \mathcal V(\mathcal A)} N_k(v) \geq t} \leq \beta_k t^{(1-k)/2} \E{| \mathcal V(\mathcal A)|},
\end{align*}
with $\beta_k$ as in \eqref{firsttail}. If $\E{\xi^{(3k+1)/2}} < \infty$, then the bound also holds with $k$ replaced by $k+$ (and $\beta_k$ by $\beta_{k+}$).
Furthermore, if $\E{\xi^{k+2}} < \infty$, then there exists a constant $C>0$ such that, 
 \begin{align*}
\Prob{\max_{ v \in \mathcal V(\mathcal A), \ell \geq k} H^*(\kappa(v)\ell) \geq t} \leq C t^{1-k} \E{| \mathcal V(\mathcal A)|}.
\end{align*}
 \end{lem}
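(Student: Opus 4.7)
The plan is a one-step union bound that reduces everything to the uniform tail estimates already established in Theorem~\ref{thm:tailbounds} and Lemma~\ref{lem:conheight}. First, by Markov's inequality,
\begin{align*}
\Prob{\max_{v \in \mathcal V(\mathcal A)} N_k(v) \geq t}
&\leq \E{\bigl|\{v \in \mathcal V(\mathcal A) : N_k(v) \geq t\}\bigr|} \\
&= \sum_{v=1}^n \Prob{v \in \mathcal V(\mathcal A),\, N_k(v) \geq t},
\end{align*}
so it suffices to separate the two events in each summand.

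The separation is achieved by conditioning on the tree outside the subtree rooted at $v$, together with the value $N(v) = m$. Under this conditioning, the subtree rooted at $v$ is distributed as $\tau_m$ (the branching property for conditioned Galton--Watson trees, which follows from exchangeability of the offspring sequence in preorder). Crucially, whether $v \in \mathcal V(\mathcal A)$ is \emph{already determined} by this exterior data, since $\kappa(v)$ depends only on the sizes of $v$'s ancestors' sibling subtrees and on $N(v)$ itself. Meanwhile $N_k(v)$ is a functional of the subtree rooted at $v$ alone, so its conditional law is exactly that of $N_k$ in $\tau_m$. Theorem~\ref{thm:tailbounds} then yields the uniform-in-$m$ bound $\beta_k t^{(1-k)/2}$. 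Multiplying by $\mathbf{1}_{\{v \in \mathcal V(\mathcal A)\}}$, taking expectations, and summing over $v$ gives
\[
\sum_{v=1}^n \Prob{v \in \mathcal V(\mathcal A),\, N_k(v) \geq t} \leq \beta_k t^{(1-k)/2} \E{|\mathcal V(\mathcal A)|},
\]
which is the first bound.

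The $N_{k+}$ variant is proved by the same argument, substituting the tail estimate from Theorem~\ref{thm:tailbounds} available under $\E{\xi^{(3k+1)/2}} < \infty$. For the height statement, observe that $\sup_{\ell \geq k} H^*(\kappa(v)\ell)$ is nothing but $\sup_{\ell \geq k} H^*(\ell)$ computed inside the subtree rooted at $v$; Lemma~\ref{lem:conheight} applied to $\tau_m$ provides the uniform tail $C t^{1-k}$, and the same union-bound calculation delivers the final inequality. The only delicate point to verify carefully is the branching property invoked in the conditioning step, but no real obstacle arises as this is by now a standard fact in the conditioned Galton--Watson literature (cf.\ the discussion in Janson~\cite{janson2012a}).
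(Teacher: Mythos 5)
Your proof is correct, and it achieves the same goal as the paper's by the same underlying principle (union bound plus conditioning to reduce to the uniform-in-$n$ tail bounds of Theorem~\ref{thm:tailbounds} and Lemma~\ref{lem:conheight}), but the packaging of the conditioning is genuinely different and in fact cleaner. The paper stratifies $\mathcal A$ by the length $\ell$ of the index sequence, enumerates the index sequences $y_1, \dots, y_K$ at that depth, conditions on the vector of subtree sizes $(N^*(y_1), \dots, N^*(y_K))$ simultaneously, and then uses conditional independence of the subtrees given these sizes. You instead do a direct union bound over the preorder labels $v \in [n]$ and, for each $v$, condition on the tree \emph{exterior} to the fringe subtree at $v$ together with $N(v)=m$. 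This one-node-at-a-time decomposition needs only a marginal (rather than joint) form of the spatial Markov property, avoids the depth stratification and the $K \leq n^\ell$ bookkeeping entirely, and makes the mechanism transparent: the event $\{v \in \mathcal V(\mathcal A)\}$ is measurable with respect to the conditioning $\sigma$-field (since every rank $\rho_{v_j}$ along the ancestor path, including $\rho_v$ itself, is a function of the sibling subtree sizes in the exterior and of $N(v)$ alone), while $N_k(v)$ (resp.\ $\sup_{\ell\ge k} H^*(\kappa(v)\ell)$) is a functional of the fringe subtree, whose conditional law is that of $\tau_m$. Pulling out the indicator and summing reproduces the $\E{|\mathcal V(\mathcal A)|}$ factor exactly as the paper's computation does via the count $\E{|\{v \in \mathcal V(\mathcal A) : N(v) \ge t\}|}$. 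The branching property you invoke is indeed standard (it is the fringe decomposition used throughout the conditioned Galton--Watson literature), and you correctly flag it as the one step requiring care; everything else is bookkeeping.
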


The corollary follows immediately from  Theorem \ref{conv:hp}.
\begin{cor} \label{cor:maxsub}
Let $k \geq 2$ and $\E{\xi^{k+1}} < \infty$. Then, there exists a constant $C_1 > 0$ such that
\begin{align*}
\Prob{\max_{ v \in \mathcal V(1^*)} N_k(v) \geq t} \leq C_1 \frac{\sqrt n }{t^{(k-1)/2}}.
\end{align*}
If $\E{\xi^{(3k+1)/2}} < \infty$, then the same results hold with $N_k(v)$ replaced by $N_{k+}(v)$ upon possibly increasing $C_1$. Finally, if $\E{\xi^{k+2}} < \infty$, then there exists $C_2 > 0$, such that
\begin{align*}
\Prob{\max_{ v \in \mathcal V(1^*), \ell \geq k} H^*(\kappa(v)\ell) \geq t} \leq C_2 \frac{\sqrt n }{t^{k-1}}.
\end{align*}

\end{cor}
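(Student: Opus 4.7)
The plan is to derive Corollary~\ref{cor:maxsub} as a direct instantiation of Lemma~\ref{lem:maxsub} with the deterministic choice $\mathcal A = \{1\}^*$, combined with the bound $\E{|\mathcal V(1^*)|} = O(\sqrt{n})$ noted in the discussion preceding Proposition~\ref{thm:general}. The only genuine input beyond the lemma is a uniform-in-$n$ estimate on the expected length of the heavy path, which requires a brief justification since the corollary's statement gives a concrete (non-asymptotic) bound valid for all $n \in I$.

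First I would observe that $\mathcal V(1^*) = \mathcal V(\{1\}^*)$ is exactly the heavy path of $\tau_n$, so each node on it has a depth equal to its position on the path. Consequently $|\mathcal V(1^*)| \leq H_n + 1$, where $H_n$ is the height of $\tau_n$. Integrating the universal Gaussian-type tail bound \eqref{boundh} of Addario-Berry--Devroye--Janson gives
\[
\E{H_n} = \int_0^\infty \Prob{H_n \geq x}\,dx \leq \sqrt{n}\int_0^\infty \exp(-\delta x^2)\,dx = \sqrt{n}\,\sqrt{\pi/(4\delta)},
\]
uniformly for all $n \geq 1$. Hence there is an absolute constant $C_0 > 0$ such that $\E{|\mathcal V(1^*)|} \leq C_0 \sqrt{n}$ for every $n \in I$.

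Second, under $\E{\xi^{k+1}} < \infty$, Lemma~\ref{lem:maxsub} applied with $\mathcal A = \{1\}^*$ yields
\[
\Prob{\max_{v \in \mathcal V(1^*)} N_k(v) \geq t} \leq \beta_k\, t^{(1-k)/2}\, \E{|\mathcal V(1^*)|} \leq \beta_k C_0\, \frac{\sqrt{n}}{t^{(k-1)/2}},
\]
so that $C_1 := \beta_k C_0$ works for the first displayed inequality. The second inequality (with $N_{k+}$ in place of $N_k$ under the stronger moment assumption $\E{\xi^{(3k+1)/2}} < \infty$) is obtained identically from the $N_{k+}$-version of the lemma with constant $\beta_{k+}$ in lieu of $\beta_k$. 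The third inequality follows from the $H^*$-part of the lemma, which gives $\Prob{\max_{v \in \mathcal V(1^*), \ell \geq k} H^*(\kappa(v)\ell) \geq t} \leq C t^{1-k} \E{|\mathcal V(1^*)|}$, under the assumption $\E{\xi^{k+2}} < \infty$; choosing $C_2 = C\, C_0$ closes the argument.

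There is no real obstacle here, as the corollary is essentially a specialization of Lemma~\ref{lem:maxsub}; the only point worth stating explicitly is that the bound on $\E{|\mathcal V(1^*)|}$ is uniform in $n$, for which \eqref{boundh} suffices. Note that invoking the full Theorem~\ref{conv:hp} would give the sharp asymptotic $\E{|\mathcal V(1^*)|} \sim c\sqrt{n}$, but this is more than needed: the crude $O(\sqrt{n})$ bound derived above already gives the stated inequalities with explicit constants.
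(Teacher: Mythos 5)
Your argument is correct, and it is essentially the paper's argument: Corollary~\ref{cor:maxsub} is obtained by plugging $\mathcal A = \{1\}^*$ into Lemma~\ref{lem:maxsub} and then controlling $\E{|\mathcal V(1^*)|}$ by $O(\sqrt n)$. The only difference is how you justify the latter bound. The paper's explicit pointer is to Theorem~\ref{conv:hp}, which gives convergence of all moments of $L_n/\sqrt n = (|\mathcal V(1^*)|-1)/\sqrt n$ and hence $\sup_n \E{|\mathcal V(1^*)|}/\sqrt n < \infty$; but the paper also remarks in the paragraph before Proposition~\ref{thm:general} that the same $O(\sqrt n)$ bound can be read off directly from the height-tail inequality~\eqref{boundh}, which is exactly what you do via $|\mathcal V(1^*)| \leq H_n + 1$ and integrating the Gaussian tail. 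Your route is the more elementary of the two (it avoids the machinery of Appendix~D entirely and yields an explicit constant), and is the one the corollary ultimately needs since only an upper bound is used. Everything else — the application of Lemma~\ref{lem:maxsub} for $N_k$, for $N_{k+}$ under $\E{\xi^{(3k+1)/2}} < \infty$, and for the $H^*$ variant under $\E{\xi^{k+2}} < \infty$ — is carried out correctly.
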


\begin{proof}  [Proof of Lemma \ref{lem:maxsub}]
For $\ell \geq 0$, let  $\mathcal A_{\ell,n}$ be the subset of $\mathcal A$ of vectors of length $\ell$ where each entry is bounded from above by $n$. 
We have
\begin{align} \label{eq:start1}
\Prob{\max_{ v \in \mathcal V(\mathcal A)} N_k(v) \geq t} \leq \sum_{\ell = 0}^n \Prob{\max_{v \in  \mathcal V(\mathcal A_{\ell,n})} N_k(v) \geq t }. 
\end{align}
We denote the elements of $ \mathcal A_{\ell,n}$ by $y_1, \ldots, y_K$, $K  = K(\ell) \leq n^\ell$. Let $\{\mathcal N_k^{(i)}(j): i \geq 1, j \in I\}$ be a family of independent random variables where each $\mathcal N_k^{(i)}(j)$ is distributed like $N_k$ in the tree $\tau_j$. Then, 
using \eqref{firsttail}, 
\begin{align*}
& \Prob{\max_{v \in  \mathcal V(\mathcal A_{\ell,n})} N_k(v) \geq t }  \\
&= \sum_{0 \leq n_1, \ldots, n_K \leq n} \Prob{\max_{v \in \mathcal V(\mathcal A_{\ell,n})} N_k(v) \geq t \bigg | \bigcap_{j=1}^K \{N^*(y_j) = n_j\} } \Prob{\bigcap_{j=1}^K \{N^*(y_j) = n_j\}} \\
& = \sum_{0 \leq n_1, \ldots, n_K \leq n} \Prob{\max_{1 \leq j \leq K} \mathcal N_k^{(j)}(n_j) \geq t} \Prob{\bigcap_{j=1}^K \{N^*(y_j) = n_j\}} \\
& \leq \sum_{0 \leq n_1, \ldots, n_K \leq n} | \{1 \leq j \leq K: n_j \geq t\}| \sup_{1 \leq i \leq K} \Prob{\mathcal N^{(1)}_k(n_i) \geq t} \Prob{\bigcap_{j=1}^K \{N^*(y_j) = n_j\}} \\
& \leq \beta_k t^{(1-k)/2} \E{|\{v \in \mathcal V(\mathcal A_{\ell,n}) : N(v) \geq t\}|}.
\end{align*}
Plugging the bound into \eqref{eq:start1} gives
\begin{align*}
\Prob{\max_{ v \in \mathcal V(\mathcal A)} N_k(v) \geq t} \leq \beta_k t^{(1-k)/2} \E{|\{v \in \mathcal V(\mathcal A) : N(v) \geq t\}|} \leq \beta_k t^{(1-k)/2} \E{|\mathcal V(\mathcal A)|}.
\end{align*}
The same proof works for $N_{k+}(v)$. Similarly, one obtains the result for the heights upon replacing $N_k(v)$ by $\max_{\ell \geq k} H^*(\kappa(v)\ell)$ and using Lemma \ref{lem:conheight}.
\end{proof}

\begin{prop} \label{upperboundH} Let $k \geq 2$ and $\E{\xi^{k+2}} < \infty$. Then, there exists a constant $C>0$ such that, for $t \geq 1$, $n \in I$,  
\begin{align*} 
\Prob{\max_{v \in [n], \ell \geq k} H^*(\kappa(v)\ell) \geq t} \leq C \left( \frac{n}{t^{k}} + \frac{\sqrt {n}}{t^{k-1}}\right). \end{align*}
\end{prop}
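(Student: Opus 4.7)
The plan is to mirror the proof of Proposition \ref{upperboundM}, replacing the pointwise tail bound on $N_k$ by the fringe-tree height estimate of Lemma \ref{lem:conheight}. Fix $n \in I$, $t \geq 1$, and for each $v \in [n]$ let $E_v$ be the event $\{\sup_{\ell \geq k} H^*(\kappa(v)\ell) \geq t\}$, i.e.\ that some child of $v$ of rank at least $k$ roots a subtree of height at least $t$. A subtree of height $\geq t$ contains at least $t+1$ nodes, so $E_v$ forces $N(v) \geq t+2$.

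Conditional on $N(v) = m \in I$, the ordered subtree of $\tau_n$ rooted at $v$ has the law of $\tau_m$---the standard fringe-subtree identity for conditional Galton-Watson trees recalled in Section \ref{sec:fringe}. Hence Lemma \ref{lem:conheight}, applied in $\tau_m$, supplies a constant $C_1 > 0$ independent of $m$ with
\begin{align*}
\Prob{E_v \mid N(v) = m} \leq C_1 t^{1-k}.
\end{align*}
A union bound followed by this conditional estimate gives
\begin{align*}
\Prob{\exists v \in [n] : E_v} \leq \sum_{v=1}^n \Prob{E_v} \leq C_1 t^{1-k}\, \E{|\{v \in [n] : N(v) \geq t+2\}|}.
\end{align*}

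To finish I would split the expected count into two regimes according to whether $v$ lies on the heavy path. Any $v$ with $N(v) \geq \lceil n/2 \rceil$ has each sibling subtree of size at most $\lfloor n/2 \rfloor$, so it is the heaviest child of its parent; iterating, such a $v$ lies on $\mathcal V(1^*)$, and thus these nodes contribute at most $\E{|\mathcal V(1^*)|} \leq \E{H_n} + 1 = O(\sqrt n)$ via \eqref{boundh}. For the remaining range $t+2 \leq \ell \leq \lceil n/2 \rceil - 1$, Corollary \ref{bounds:mean} yields $\E{Z_\ell} \leq 2\sqrt 2 \alpha n\ell^{-3/2}$ once $\ell \geq n_0$, while the trivial bound $\E{Z_\ell} \leq n/\ell$ absorbs the finitely many terms with $\ell < n_0$; either way $\sum_{\ell = t+2}^{\lceil n/2 \rceil - 1} \E{Z_\ell} = O(n/\sqrt t)$. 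Combining,
\begin{align*}
\E{|\{v \in [n] : N(v) \geq t+2\}|} = O(\sqrt n) + O(n/\sqrt t),
\end{align*}
and multiplying by $C_1 t^{1-k}$ produces the claimed $O(\sqrt n /t^{k-1} + n/t^k)$.

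The only delicate point is the fringe-subtree identity together with the uniformity in $m$ of the bound in Lemma \ref{lem:conheight}; once these are set up, the argument is a clean two-regime count in the spirit of Proposition \ref{upperboundM}.
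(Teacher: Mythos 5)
Your scaffolding---union bound over $v$, conditional fringe-subtree identity reducing to a single $\tau_m$, heavy-path versus off-heavy-path split---matches the paper's, but the final arithmetic is wrong and exposes a genuine gap. Multiplying $\E{|\{v : N(v)\geq t+2\}|} = O(\sqrt n + n/\sqrt t)$ by $C_1 t^{1-k}$ gives $O(\sqrt n/t^{k-1} + n/t^{k-1/2})$, not $O(\sqrt n/t^{k-1} + n/t^k)$ as you assert: the $n$-term is a factor of $\sqrt t$ short. This matters: the exponent $t^k$ (not $t^{k-1/2}$) is exactly what makes the critical scale $t\asymp n^{1/(k+1)}$ work in Theorem \ref{thm:maindist}(ii), and at that scale the weaker $n/t^{k-1/2}$ diverges, so the loss is fatal rather than cosmetic.

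The underlying issue is that the uniform bound $\Prob{E_v\mid N(v)=m}\leq C_1 t^{1-k}$ you extract from Lemma \ref{lem:conheight} discards the $m$-dependence that drives the correct exponent. By \eqref{boundh}, a fringe subtree of size $\mathcal N_k(m)\leq m$ has height $\geq t$ with probability at most $\exp(-\delta t^2/m)$, so the conditional probability of $E_v$ is superpolynomially small for $t\leq m\ll t^2$; the event is effectively only possible when $N(v)\gtrsim t^2$, of which there are only $O(n/t)$ in expectation, and $O(n/t)\cdot t^{1-k}=O(n/t^k)$ is the right count. The paper never integrates out $m$: it bounds the conditional probability by roughly $m^{(1-k)/2}e^{-\delta t^2/m}$ (integrating $\exp(-\delta t^2/\mathcal N_k(m))$ against the tail of $\mathcal N_k(m)$ from Theorem \ref{thm:tailbounds}), then sums against $\E{Z_m}\asymp n m^{-3/2}$ to get $n\sum_m m^{-1-k/2}e^{-\delta t^2/m}=O(n/t^k)$, the exponential suppressing the range $m\ll t^2$. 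To repair your argument you cannot cite Lemma \ref{lem:conheight} as a black box: you need to carry the exponential factor from \eqref{boundh} (together with the per-$\ell$ estimate \eqref{tailrem}) inside the sum over $m$, rather than replacing it with the worst-case $t^{1-k}$.
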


\begin{proof} We may assume $t \geq n_0$ with $n_0$ as in Lemma \ref{bounds:mean}. Taking the maximum only over nodes $v \in \mathcal V(1^*)$, the claim follows from Corollary \ref{cor:maxsub}. For $k \geq 1, n \in I$, let $(\mathcal H(n), \mathcal N_k(n), \bar \xi(n))$ be distributed like $(H, N_k, \xi_\epsilon)$ in $\tau_n$. Using \eqref{boundh}, we have
\begin{align*} & \Prob{\max_{v \in [n] \backslash \mathcal V(1^*), \ell \geq k} H^*(\kappa(v)\ell) \geq t} \\ & \leq \E{ | \{v \in [n]  \backslash \mathcal V(1^*) :  H^*(\kappa(v)\ell) \geq t \ \text{for some} \ \ell \geq k\}|} \\
& = \sum_{i = 1}^n \Prob{H^*(\kappa(i)\ell ) \geq t \ \text{for some} \ \ell \geq k, i \notin \mathcal V(1^*)} \\
& \leq  \sum_{i = 1}^{n} \sum_{\ell = k}^n \sum_{m = t}^{\lceil n / 2 \rceil -1} \sum_{j=t}^m\Prob{\mathcal H(j) \geq t} \Prob{\mathcal N_\ell(m) = j, \ell \leq \bar \xi(m)} \Prob{N(i) = m} \\
& \leq \sum_{i = 1}^n \sum_{\ell = k}^n \sum_{m = t}^{\lceil n / 2 \rceil -1}   \E{\exp\left( -\frac{\delta t^2}{\mathcal N_\ell(m)}\right) \mathbf{1}_{ t \leq \mathcal N_\ell(m), \ell \leq \bar \xi(m)}} \Prob{N(i) = m}. 
\end{align*}
The expectation in the last display is bounded by
\begin{align*}
& \int_{0}^{e^{-\delta t^2/m}}  \Prob{\exp\left( -\frac{\delta t^2}{\mathcal N_\ell(m)}\right) \geq x, \ell \leq \bar \xi(m)} dx \\ &=   \int_{0}^{e^{-\delta t^2/m}} \Prob{\mathcal N_k(m) \geq \frac{\delta t^2}{\log 1/x}, \ell \leq \bar \xi(m)} dx. 
\end{align*}
By Theorem \ref{thm:tailbounds},  there exists $C_1 > 0$ such that 
\begin{align*}
 \sum_{\ell = k}^n  \int_{0}^{e^{-\delta t^2/m}} \Prob{\mathcal N_k(m) \geq \frac{\delta t^2}{\log 1/x}, \ell \leq \bar \xi(m)} dx &  \leq  \int_{0}^{e^{-\delta t^2/m}}  \E{\bar \xi(m) \mathbf{1}_{\mathcal N_k(m) \geq \frac{\delta t^2}{\log 1/x}}} dx \\
& \leq C_1 \delta^{(1-k)/2} t^{1-k} \int_{0}^{e^{-\frac{\delta t^2}{m}}} \left(\log \frac 1 x \right)^{\frac {k-1}{2}} dx \\
& \leq C_2 m^{(1-k)/2} e^{-\delta t^2 / m }.
\end{align*}
Here, $C_2 > 1$ denotes some constant which is independent of $m, t$ and $n$. 
Summarizing and using Corollary \ref{bounds:mean}, we obtain
\begin{align*}
\Prob{\max_{v \in [n] \backslash \mathcal V(1^*), \ell \geq k} H^*(\kappa(v)\ell) \geq t} & \leq C_2 \sum_{m = t}^{\lceil n / 2 \rceil -1} \E{Z_m} ( m^{(1-k)/2} e^{-\delta t^2 / m } + e^{-\delta t}) \\
& \leq 2 \sqrt{2} C_2 \alpha n\sum_{m = t}^{\lceil n / 2 \rceil -1}  m^{-1-k/2} e^{-\delta t^2/ m }  \leq C_3 n t^{-k},
\end{align*}
for some $C_3 > 0$. 
This concludes the proof.
\end{proof}

\subsection{Lower bounds} \label{sec:down}
Our lower bounds rely on a variant of the second moment method which requires sufficiently tight upper bounds on variances (or second moments). To this end, we use Lemma 6.1 in Janson \cite{janson2013} and introduce the notation used in this work. 
Denote by $\mathfrak T$ the set of all ordered rooted trees. For a function $f : \mathfrak T \to \mathbb R$, let $F$ be defined by
$$F(\mathbb T) := F(f, \mathbb T): = \sum_{v \in \mathcal V(\mathbb T)} f(\mathbb T_v), \quad \mathbb T \in \mathfrak T.$$
Here $\mathbb T_v$ denotes the fringe tree in $\mathbb T$ rooted at $v$. 
For $k \geq 1$, we abbreviate $f_k(\mathbb T) := f(\mathbb T) \mathbf{1}_{|\mathbb T| = k}$. Note that $F(f_k, \tau_n) = Z_k$ for $f = \mathbf 1$, where $\mathbf 1$ denotes the function $f$ mapping every tree to $1$.
 Then, for $1 \leq m \leq k \leq n/2$, 
\begin{align*} 
 \text{Cov}  (F(f_k, \tau_n),  F(f_m, \tau_n)) = I_1(f,k,m) +  I_2(f,k,m) +I_3(f,k,m), \end{align*}
 where
 \begin{align*} 
I_1(f,k,m) = & \frac{n\Prob{S_{n-k} = 0} \Prob{S_k = -1}}{k\Prob{S_n = -1}}   \E{f_k(\tau_k) F(f_m,\tau_k)}, \\
I_2(f,k,m) = &  \frac{n(n-k-m+1)}{mk} \Prob{S_k = -1} \Prob{S_m = -1}   \E{f_k(\tau_k)} \E{f_m(\tau_m)} \cdot \\
& \left( \frac{\Prob{S_{n-k-m} = 1}}{ \Prob{S_n = -1}}  - \frac{\Prob{S_{n-k} = 0}}{ \Prob{S_n = -1}} \frac{\Prob{S_{n-m} = 0}}{ \Prob{S_n = -1}}\right), \\
\end{align*}
and
\begin{align*}
I_3(f,k,m) =  & - \frac{n(k+m-1)}{mk} \frac{\Prob{S_{n-k} = 0}}{ \Prob{S_n = -1}} \frac{ \Prob{S_{n-m} = 0}}{\Prob{S_n = -1}} \cdot \\ 
&  \Prob{S_k = -1} \Prob{S_m = -1}  \E{f_k(\tau_k)} \E{f_m(\tau_m)}. 
\end{align*}
Note that, by the crucial Lemma 6.2 in \cite{janson2013}, cancellation effects in $I_2(f,k,m)$ cause this term to be of the order $n$ (for $m,k$ fixed), rather than $n^2$. Below, we only need upper bounds on the variance which allows us to neglect $I_3(f,k,m)$. For $i = 1, 2$, we set $I_i(k,m) = I_i(\mathbf 1, k,m)$.

For $1 \leq t \leq n$, $t \in \N$, we define $$\mathcal Z_t = | \{v \in [n] :  t \leq N(v) \leq 2t\}| = \sum_{\ell = t}^{2t} Z_\ell.$$ From Corollary \ref{bounds:mean}, we know that there exist a constant $K_1 > 0$ only depending on the offspring distribution such that, for all $1 \leq t \leq n/4$, we have 
\begin{align} \label{inq:K1} K_1^{-1} \frac{n}{\sqrt{t}} \leq \E{\mathcal Z_t} \leq K_1 \frac{n}{\sqrt{t}}. \end{align}

\begin{prop} \label{propeasy} There exists a constant $C > 0$, such that, for all $1 \leq t \leq (n-1)/4, t \in \N$ and $n \in I$, we have 
$$\emph{Var}(\mathcal Z_t)  \leq C n.$$
In particular, for any $t = t(n)$ with $t = o(n)$, we have, as $n \to \infty$, in probability, 
$$\frac{\mathcal Z_t}{\E{\mathcal Z_t}} \to 1.$$

\end{prop}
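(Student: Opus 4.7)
The plan is to reduce $\mathrm{Var}(\mathcal Z_t)=\sum_{k,m=t}^{2t}\mathrm{Cov}(Z_k,Z_m)$ to sums of the terms $I_1(k,m)$ and $I_2(k,m)$ recalled just above the statement from Janson \cite{janson2013}, since $I_3(k,m)\le 0$ by inspection. The bound $\mathrm{Var}(\mathcal Z_t)=O(n)$ then falls out of estimating these two sums by a combination of the local limit theorem \eqref{LLT}--\eqref{LLTgen} and the cancellation recorded in Janson's Lemma~6.2.

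For the $I_1$ sum, the exact-mean formula \eqref{exactmean} applied inside $\tau_k$ gives $\E{F(\mathbf{1}_m,\tau_k)}=k\Prob{S_m=-1}\Prob{S_{k-m}=0}/(m\Prob{S_k=-1})$, so
$$I_1(k,m)=\frac{n\Prob{S_{n-k}=0}\Prob{S_m=-1}\Prob{S_{k-m}=0}}{m\Prob{S_n=-1}}.$$
On the diagonal $k=m$ the factor $\Prob{S_0=0}=1$ collapses this to $\E{Z_k}$, whose sum is $\E{\mathcal Z_t}=O(n/\sqrt{t})=O(n)$ by \eqref{inq:K1}. The off-diagonal terms are bounded via \eqref{LLTgen}--\eqref{LLT} by $C\sqrt{n}/(m^{3/2}\sqrt{|k-m|})$, and summation over $k,m\in[t,2t]$ produces $O(\sqrt{n})$, which is negligible.

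The main work lies in $\sum I_2(k,m)$. Each individual term could be of order as large as $n$, so a $t^2$-fold sum has no chance of being $O(n)$ unless the cancellation in the bracket
$$\frac{\Prob{S_{n-k-m}=1}}{\Prob{S_n=-1}}-\frac{\Prob{S_{n-k}=0}\Prob{S_{n-m}=0}}{\Prob{S_n=-1}^2}$$
is exploited. This is precisely what Janson's Lemma~6.2 provides: a quantitative version of the leading-order cancellation (which a careful Taylor expansion using \eqref{LLTgen} reveals to annihilate the term of order $(k+m)/n$). Combined with the uniform bound $\Prob{S_k=-1}\Prob{S_m=-1}\le C/\sqrt{km}$ and the prefactor $n(n-k-m+1)/(km)$, the lemma yields
$$|I_2(k,m)|\le \frac{Cn}{(km)^{3/2}}$$
uniformly for $k,m\in[t,2t]$ (which forces $k+m\le n/2$ by the assumption $t\le (n-1)/4$). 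Summing, $\sum_{k,m\in[t,2t]}n/(km)^{3/2}\le Cn/t\le Cn$, which together with the previous paragraph gives $\mathrm{Var}(\mathcal Z_t)\le Cn$.

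The in-probability convergence is then routine: by Chebyshev's inequality and the two-sided bound \eqref{inq:K1},
$$\Prob{\left|\frac{\mathcal Z_t}{\E{\mathcal Z_t}}-1\right|\ge\varepsilon}\le\frac{\mathrm{Var}(\mathcal Z_t)}{\varepsilon^2\E{\mathcal Z_t}^2}\le\frac{CK_1^2\,t}{\varepsilon^2\,n},$$
which tends to zero when $t=o(n)$. The principal obstacle is the uniform quantitative control of the cancellation in the $I_2$ bracket over the full admissible range $1\le t\le(n-1)/4$, and this is the single ingredient that is delegated to Janson's Lemma~6.2 rather than being handled directly with the LLT.
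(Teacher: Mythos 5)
Your decomposition is the same one the paper uses (drop $I_3 \le 0$, bound $I_1$ by the LLT, bound $I_2$ by Janson's Lemma~6.2), but your application of Lemma~6.2 keeps only one of its three error terms, and the claimed inequality $|I_2(k,m)|\le Cn/(km)^{3/2}$ is in fact \emph{false} over the full range $1\le t\le (n-1)/4$. What Lemma~6.2 delivers (as the paper records in~\eqref{boundI3}) is
$$|I_2(k,m)|\le C\,n^2(km)^{-3/2}\left(\frac 1 n + \frac{k+m}{n^{3/2}} + \frac{km}{n^2}\right),$$
i.e.\ three terms, of which you retained only the first. Your intuition that the Taylor expansion of the bracket cancels the $O(1)$ and $O((k+m)/n)$ contributions is correct, but the surviving leading term of that Taylor expansion is of order $km/n^2$, which after multiplying by the prefactor $\approx n^2(km)^{-3/2}$ gives precisely the third term $1/\sqrt{km}$ — not the $n/(km)^{3/2}$ you kept. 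The terms $1/n$ and $(k+m)/n^{3/2}$ come from the $o(1/\sqrt{N})$ error in the local limit theorem, not from the Taylor remainder, and they do not cancel. Concretely, for $t\approx n/4$ and $k,m\approx n/2$, the true $I_2(k,m)$ is of order $1/n$ (from the $1/\sqrt{km}$ term), whereas your bound predicts order $1/n^2$; the inequality you assert is off by a factor of $n$.

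Your argument should therefore be repaired as in the paper: keep all three terms of the Lemma~6.2 bound, sum over $(k,m)\in[t,2t]^2$ to get $O(n/t+\sqrt n+t)$, and use $t\le (n-1)/4$ to conclude this is $O(n)$. Note that the $t$ term at the end is genuinely needed and is saturated near $t\approx n$, so the full admissible range is used. Two smaller inaccuracies: your off-diagonal bound on $I_1(k,m)$ should be $O(n/(m^{3/2}\sqrt{k-m}))$ rather than $O(\sqrt n/(m^{3/2}\sqrt{k-m}))$ — the factor $\Prob{S_n=-1}$ in the denominator contributes a $\sqrt n$ you dropped — giving a sum of order $n$ (still acceptable) rather than $\sqrt n$; and $t\le(n-1)/4$ forces $k+m\le n-1$, not $k+m\le n/2$, though this plays no role in the estimate.
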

\begin{proof}
We use the notation introduced above with the function $f = \mathbf 1$. 
Obviously, $$\text{Var}(\mathcal Z_t) = \sum_{k,m = t}^{2t} \text{Cov}(Z_k, Z_m) \leq 2 \sum_{m= t}^{2t}\sum_{k=m}^{2t} \text{Cov}(Z_k, Z_m). $$ 
In the following, $C_i, i \geq 1,$ denote constants independent of $k, m, t$ and $n$, whose precise values are of no relevance. For $m \leq k$,  by the local limit theorem \eqref{LLTgen}, we have
 $I_1(k,m) \leq C_1 n m^{-3/2} (\max(1,k-m))^{-1/2}$. Thus, $$\sum_{m=t}^{2t} \sum_{k=m}^{2t} I_1(k,m) \leq C_2 n \sum_{m=t}^{2t} m^{-3/2} \sqrt{2t - m} \leq C_3 n.$$ 
By Lemma 6.2 in \cite{janson2013}, for $t \leq m \leq k \leq 2t$,
\begin{align} \label{boundI3}
I_2(k,m) &\leq C_4 n^2 ((km)^{-3/2} \left(\frac 1 n + \frac{k+m}{n^{3/2}} + \frac{km}{n^2}\right) \leq C_5 t^{-2} (nt^{-1} + \sqrt{n} + t).
\end{align}
Hence, $\sum_{m=t}^{2t} \sum_{k=m}^{2t} I_2(k,m) \leq C_6 (nt^{-1} + \sqrt{n} + t) \leq C_7 n$. This finishes the proof. \end{proof}

For $\ell \geq 2$ and $t > 0$, let $g_\ell(\mathbb T) = \mathbf{1}_{n_\ell(\mathbb T) \geq t}$ where $n_\ell(\mathbb T)$ denotes the size of the $\ell$-th biggest subtree of the root of $\mathbb T$.  (We suppress $t$ in the notation.) For $i \geq 1$, define $F^*_i(\cdot) = F((g_\ell)_i, \cdot)$. Further, for $t > 0$, let $t' = \lfloor (\ell+1) t \rfloor$. 
Finally, let $V_t = \sum_{i= t'}^{2t'} F^*_i(\tau_n)$. 
Then, \begin{align} \label{meanZ} \E{V_t} = \sum_{i = t' }^{ 2t'} \Prob{\mathcal N_\ell(i) \geq t} \E{Z_i}, \end{align}
where, as before,  we write $\mathcal N_\ell(i)$ for a random variable distributed like $N_\ell$ in $\tau_i$.

\begin{prop}

Let $\ell \geq 2$. 

\begin{itemize} \item [(i)] If $\E{\xi^{\ell + 1}} < \infty$, then, there exists a constant $C_1  > 0$, such that, for $ n \in I$ sufficiently large and $t \leq n/4$, $$ \E{V_t}  \leq C \frac{n}{t^{\ell/2}}.$$
\item [(ii)] If $\sum_{m \geq \ell} p_m > 0$, then, there exist constants $C_2, K_2 > 0$, such that, for $ n \in I$ sufficiently large, and $C_2 \leq t \leq n  / (4(\ell+1))$,
$$\E{V_t} \geq K_2^{-1} \frac{n}{t^{\ell/2}}.$$
 \item [(iii)]  If $\E{\xi^{\ell + 1}} < \infty$, then there exists a constant $K_3 > 0$ such that, for all $n \in I, 0 < t < (n-1)/4$, we have $$\emph{Var}(V_t) \leq (1 + K_3 t^{(3-\ell)/2}) \E{V_t} + K_3 \left(n t^{- \ell} + \sqrt{n} t^{1-\ell} + t^{2-\ell}\right).$$
\end{itemize}

\end{prop}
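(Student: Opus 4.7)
\medskip\noindent\textbf{Proof plan.} The starting point for all three parts is the identity \eqref{meanZ}, $\E{V_t} = \sum_{i=t'}^{2t'} \Prob{\mathcal N_\ell(i) \geq t}\E{Z_i}$. For (i), I would insert the upper tail bound $\Prob{\mathcal N_\ell(i) \geq t} \leq \beta_\ell t^{(1-\ell)/2}$ from Theorem \ref{thm:tailbounds} together with $\E{Z_i} \leq 2\sqrt 2 \alpha n\, i^{-3/2}$ from Corollary \ref{bounds:mean}, valid since $2t' \leq n/2$. Summing $i^{-3/2}$ over the $\asymp t$ indices in $[t',2t']$ gives $O(t^{-1/2})$, producing $\E{V_t} \leq C n t^{-\ell/2}$. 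For (ii), I would use the matching lower bound from Theorem \ref{thm:tailbounds2}, $\Prob{\mathcal N_\ell(i) \geq t} \geq \beta_\ell^* t^{(1-\ell)/2}$, which requires $t \leq i/\ell - s_\ell$; this is automatic for all $i \geq t' = \lfloor(\ell+1)t\rfloor$ provided $t \geq \ell s_\ell + 1$, giving the constant $C_2$. Coupling this with the lower bound $\E{Z_i} \geq \alpha n i^{-3/2}/2$ on indices in $I_n \cap I$ (of which there are $\asymp t/h$ in $[t',2t']$) yields the claim.

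For (iii), I would apply the covariance decomposition stated just above the proposition, $\mathrm{Cov}(F_k^*,F_m^*) = I_1(g_\ell,k,m)+I_2(g_\ell,k,m)+I_3(g_\ell,k,m)$, and note that $I_3 \leq 0$ since its prefactor is negative while all other factors are non-negative. Thus $\mathrm{Var}(V_t) \leq \sum_{k,m=t'}^{2t'}\bigl(I_1(g_\ell,k,m)+I_2(g_\ell,k,m)\bigr)$. The diagonal $I_1(g_\ell,k,k)$ reduces to $\Prob{\mathcal N_\ell(k) \geq t}\E{Z_k}$ because, for $k=m$, the only $v \in \tau_k$ with $|\tau_{k,v}|=k$ is the root, so $(g_\ell)_k(\tau_k)F_k^*(\tau_k)=(g_\ell)_k(\tau_k)$; summing in $k$ recovers exactly $\E{V_t}$, contributing the leading $\E{V_t}$ term on the right-hand side. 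For the $I_2$ terms, the factorisation $\E{(g_\ell)_k(\tau_k)}=\Prob{\mathcal N_\ell(k) \geq t}$ gives $I_2(g_\ell,k,m)=\Prob{\mathcal N_\ell(k) \geq t}\Prob{\mathcal N_\ell(m) \geq t}\,I_2(\mathbf 1,k,m)$; applying Theorem \ref{thm:tailbounds} to both tail factors and the bound \eqref{boundI3} on $I_2(\mathbf 1,k,m)$, the double sum over $[t',2t']^2$ yields exactly the $K_3(nt^{-\ell}+\sqrt n\,t^{1-\ell}+t^{2-\ell})$ contribution.

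The main obstacle is the off-diagonal $I_1(g_\ell,k,m)$ with $m<k$, both in $[t',2t']$. A naive bound $\E{(g_\ell)_k(\tau_k)F_m^*(\tau_k)} \leq \Prob{\mathcal N_\ell(m)\geq t}\E{Z_m^{(k)}}$ leads to $I_1(g_\ell,k,m) \leq \Prob{\mathcal N_\ell(m)\geq t}I_1(\mathbf 1,k,m)$, which after summation gives $O(n t^{(1-\ell)/2}) = O(t^{1/2}\E{V_t})$, too weak for $\ell \geq 3$. To obtain the sharp bound $O(t^{(3-\ell)/2}\E{V_t})$, one exploits the geometric observation that any non-root $v \in \tau_k$ with $|\tau_{k,v}|=m \geq t' > t$ is necessarily contained in a subtree $T$ of the root of $\tau_k$ with $|T| \geq m \geq t$; hence that subtree automatically contributes to the count in $\{N_\ell(\tau_k)\geq t\}$, and the event becomes $N_{\ell-1}\geq t$ in the remaining forest (of total size $\leq k-m \leq t'$). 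Applying Theorem \ref{thm:tailbounds} with parameter $\ell-1$ in place of $\ell$ saves a factor of $t^{1/2}$ over the naive argument, upgrading $n t^{(1-\ell)/2}$ to $n t^{-\ell/2}\cdot t^{(3-\ell)/2}$, which is the desired $t^{(3-\ell)/2}\E{V_t}$. The technical heart of the argument is the conditioning, standard for conditional Galton--Watson trees, by which the subtree rooted at $v$ given its size $m$ is an independent copy of $\tau_m$ and the remainder of $\tau_k$ retains a tractable distribution, allowing the reduction to a tail bound for $N_{\ell-1}$.
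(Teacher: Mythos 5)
Your treatment of (i) and (ii) matches the paper's: starting from \eqref{meanZ}, apply Theorems~\ref{thm:tailbounds} and~\ref{thm:tailbounds2} to the tail factors and \eqref{inq:K1} (equivalently Corollary~\ref{bounds:mean}) to $\sum_i \E{Z_i}$. (A minor point: the window $[t',2t']$ forces the constraint $t' \leq n/4$, i.e.\ $t\leq n/(4(\ell+1))$, to invoke \eqref{inq:K1}; the paper's statement of (i) with $t\le n/4$ glosses over this as well.) For (iii), your setup is again the paper's: use Janson's covariance decomposition, drop $I_3\le 0$, bound $\sum I_2(g_\ell,k,m)$ via Theorem~\ref{thm:tailbounds} and \eqref{boundI3} to get $O(nt^{-\ell}+\sqrt n\,t^{1-\ell}+t^{2-\ell})$, and note that the diagonal $I_1$ terms recombine to exactly $\E{V_t}$.

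Where you genuinely diverge from the paper is the off-diagonal $I_1$ sum, and there the proposal has a gap. You correctly flag that the naive bound (drop the outer indicator $\mathbf 1_{N_\ell(\tau_k)\ge t}$ and pay $\beta_\ell t^{(1-\ell)/2}$ for the inner one) loses a factor $t^{(\ell-2)/2}$. Your fix keeps the outer indicator and tries to improve its conditional probability: given a descendant $v$ of the root with subtree size $m\ge t'>t$, the subtree of the root containing $v$ is automatically of size $\ge t$, so you hope to reduce the outer event to ``$N_{\ell-1}\ge t$ in the remaining forest'' and apply Theorem~\ref{thm:tailbounds} with $\ell-1$. But Theorem~\ref{thm:tailbounds} is a statement about $\tau_j$ for $j\in I$; after conditioning on the preorder position of $v$ and $N^*(v)=m$, what remains is not a conditional Galton--Watson tree $\tau_{k-m+1}$ but a tree of that size conditioned to have a leaf at a prescribed preorder position, with a size-biased child count at the root along the spine to that leaf. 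Theorem~\ref{thm:tailbounds} does not apply to that object, and establishing the analogous tail estimate would require an additional lemma that you have not supplied. So as written, the ``technical heart'' you defer to is a real gap, not a routine verification.

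The paper avoids this conditioning entirely with a much simpler observation. Rewriting $\sum_{m}\sum_{k>m} I_1(g_\ell,k,m)$ as the expected number of ordered pairs $(v,w)$ with $w\in\tau(v)\setminus\{v\}$, $N(v),N(w)\in[t',2t']$, and $N_\ell(v),N_\ell(w)\ge t$, one drops the \emph{inner} condition $N_\ell(w)\ge t$ at cost $\beta_\ell t^{(1-\ell)/2}$ (by conditioning on $N(w)=m$, $\tau(w)$ is an independent $\tau_m$), and then bounds the number of choices of $w$ inside $\tau(v)$ deterministically by $N(v)-1\le 2t'-1\le 2(\ell+1)t$. This produces $2\beta_\ell(\ell+1)t^{(3-\ell)/2}\E{V_t}$ directly, with no delicate conditioning on the complement of a fringe subtree. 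If you want to salvage your route, you would need to prove a version of Theorem~\ref{thm:tailbounds} for a Galton--Watson tree conditioned both on its size and on having a marked leaf at a given position; it is far easier to switch to the paper's counting bound.
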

\begin{proof}
The bounds on the mean in (i) and (ii) immediately follow from \eqref{meanZ} and the bounds in \eqref{inq:K1} using the tail bounds in Theorems \ref{thm:tailbounds} and \ref{thm:tailbounds2}.
In (iii), we may assume $\sum_{m \geq \ell} p_m > 0$, since,  otherwise, $V_t = 0$ almost surely. We then have $$\text{Var}(V_t)  \leq \sum_{m=t'}^{2t'}  (I_1(g_\ell, m,m) + I_2(g_\ell, m,m)) + 2\sum_{m= t'}^{2t'}\sum_{k=m+1}^{2t'} (I_1(g_\ell, k,m) + I_2(g_\ell, k,m)),$$ where
\begin{align} \label{def:ilkm}
I_1(g_\ell, k,m) & = \frac{n\Prob{S_{n-k} = 0}}{k\Prob{S_n = -1}}  \Prob{S_k = -1} \E{ \mathbf{1}_{ N^*_\ell \geq t} |\{v \in \tau_k : N^*(v) = m, N^*_\ell(v) \geq t \}|}, 
\end{align}
and
\begin{align*}
 I_2(g_\ell, k,m) & =  
 I_2(k,m) \Prob{\mathcal N_\ell(k) \geq t} \Prob{\mathcal N_\ell(m) \geq t}.
\end{align*}
In \eqref{def:ilkm}, $^*$ is used on the right-hand side  to indicate that the quantities are considered in the tree $\tau_k$.  Combining the bounds in Theorem \ref{thm:tailbounds} and \eqref{boundI3}, there exists $C_1 > 0$ such that $$\sum_{m= t'}^{2t'} \sum_{k=m}^{2t'} I_2(g_\ell, k,m) \leq C_1 (n + \sqrt{n} t + t^2) t^{- \ell}.$$ Next, again using Theorem \ref{thm:tailbounds},
\begin{align*} & \sum_{m= t'}^{2t'} \sum_{k=m+1}^{2t'} I_1(g_\ell, k,m) \\ & = \E{ | \{(v,w) \in \tau_n^2:  t' \leq N(v), N(w) \leq 2t', N_\ell(v) \geq t, N_\ell(w) \geq t, w \in \tau(v), w \neq v \}| } \\
& \leq \beta_\ell t^{(1-\ell)/2}  \E{ | \{(v,w) \in \tau_n^2: t' \leq N(v) \leq 2t', N_\ell(v) \geq t, w \in \tau(v), w \neq v \}| } \\
& \leq 2\beta_\ell (\ell+1) t^{(3-\ell)/2} \E{ | \{v \in [n]: t' \leq N(v) \leq 2t', N_\ell(v) \geq t \}| } \\ 
& = 2 \beta_\ell  (\ell+1) t^{(3-\ell)/2} \E{V_t}.
\end{align*}
Finally, $\sum_{m=t'}^{2t'}  I_1(g_\ell, m,m) = \E{V_t}$. 
This concludes the proof.
\end{proof}

\begin{proof} [Proof of Theorem \ref{thm:maindist}] The upper bounds \eqref{bound:lowM} and \eqref{bound:lowH}  follow immediately from Propositions \ref{upperboundM}  and \ref{upperboundH}. 
For the lower bound in \eqref{bound:lowM2}, let $\ell \geq 3$, and note that, by Chebyshev's inequality, using the bounds in the previous proposition, for $t$ and $n$ sufficiently large with $t \leq n / (4(\ell+1))$, 
$$\Prob{V_t = 0} \leq \Prob{|V_t - \E{V_t}| \geq \E{V_t}} \leq \frac{\text{Var}(V_t)}{\E{V_t}^2} \leq \frac{1 + K_3}{\E{V_t}} + K_2^2 K_3 \left( \frac {1}{n} + \frac{t}{n^{3/2}} + \frac{t^2}{n^2}\right).$$
Now, \eqref{bound:lowM2} follows upon choosing $t = c n^{2/\ell}$ with $c > 0$ sufficiently small.
For the lower bound in \eqref{bound:lowH} note that, for $\varepsilon > 0$, there exists $n_3 > 0$ such that, for all $n \geq n_3$, we have $\Prob{H \geq \varepsilon \sqrt{n}} \geq 1- \varepsilon$. Hence, for $n_3 \leq m \leq n$, 
\begin{align*}
& \Prob{\max_{v \in [n]} \min_{1\leq i \leq \ell} H^*(\kappa(v) i ) \geq \varepsilon \sqrt{m}} \\ & \geq \Prob{\max_{v \in [n]} \min_{1\leq i \leq \ell} H^*(\kappa(v) i )  \geq \varepsilon \sqrt{m}, \max_{v \in [n]} N_\ell(v) \geq m} \\
& \geq  \sum_{j = 1}^n \sum_{m' =m}^n \Prob{ \min_{1\leq i \leq \ell}  H^*(\kappa(j)i) \geq \varepsilon \sqrt{m},  N_\ell(j) = m', N_\ell(j') < m \ \text{for all } 1 \leq j' < j} \\
& \geq (1-\varepsilon)^\ell \Prob{\max_{v \in [n]} N_\ell(v) \geq m}. 
\end{align*}
Hence, the lower bound in \eqref{bound:lowH} follows from the lower bound in \eqref{bound:lowM} upon choosing $m = c_1 n^{2/\ell}$ in the last display with $c_1 > 0$ sufficiently small. \end{proof}
\section{The heavy path} \label{sec:heavypath}

In this section, we study $\mathcal V(1^*)$. We set $L_n = |\mathcal V(1^*)| - 1$ as in the introduction. Recall from Section \ref{sec:twopic} (i), that the scaling limit of conditional Galton-Watson trees is the continuum random tree. More precisely, with the depth-first search process $(D_t)_{0 \leq t \leq 2n-2}$ defined in Section \ref{sec:encoding} and endowing the space of continuous functions with the supremum norm, we have, 
\begin{align} \label{conv:ex}
\left(\frac{D_{t(2n-2)}}{\sqrt n}\right)_{0 \leq t \leq 1} \stackrel{d}{\longrightarrow} \frac{2}{\sigma} \cdot \mathbf e,
\end{align}
where $\mathbf e$ is a standard Brownian excursion. This is Aldous's Theorem 2 \cite{al2}.
As already indicated in the introduction, the heavy path can be defined in the  continuum random tree making use of its definition based on Brownian excursion. Therefore, using \eqref{conv:ex}, convergence of $n^{-1/2} L_n$ boils down to an application of the continuous mapping theorem.
The technical steps in this context leading to the following theorem are intricate and of entirely different flavor than the arguments in the rest of the paper. Therefore, we defer the analysis to Appendix D. A representation of the limiting random variable, also stated in Appendix D, leading to the explicit formula for the moments relies on arguments from self-similar fragmentation processes, in particular, on the work of Bertoin \cite{bertoin_hom, bertoin_self} and Carmona, Petit and Yor \cite{carpetyor}.

\begin{thm} \label{conv:hp}
As $n \to \infty$, in distribution and with convergence of all moments, 
$$\frac{L_n}{\sqrt n} \to \frac 2 \sigma \cdot T_\infty,$$
where, for $k \geq 1$, 
$$\E{T_\infty^k} = \frac{k!}{\Phi(\frac 1 2) \cdots \Phi(\frac k 2)},$$
with $\Phi(q) = \frac{4}{\sqrt{\pi}}  \cdot {}_2F_1\left(-\frac 1 2, \frac 3 2 -q; \frac 1 2; \frac 1 2\right), q > 0$. Here, ${}_2F_1$ denotes the standard hypergeometric function. 
The distribution of $T_\infty$ is characterized by the stochastic fixed-point equation  \eqref{perp}.
\end{thm}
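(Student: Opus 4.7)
The plan is to lift the problem to the continuum random tree (CRT) via the invariance principle \eqref{conv:ex}, identify the limiting object as the ``heavy path'' of the Brownian CRT, and then extract the distribution of $T_\infty$ through the self-similar fragmentation structure of the CRT.

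First I would define the continuum analogue of the heavy path. Given a Brownian excursion $\mathbf{e}$, the associated CRT is the quotient of $[0,1]$ under the pseudometric $d(s,t)=\mathbf{e}(s)+\mathbf{e}(t)-2\min_{[s\wedge t,\,s\vee t]}\mathbf{e}$. A generic branchpoint splits the tree into two subtrees with masses (Lebesgue measures) $M$ and $1-M$; the heavy path is the unique monotone geodesic from the root that at every branchpoint enters the side of larger mass. Let $T_\infty$ denote its length. The distributional convergence $\sigma L_n/(2\sqrt n)\to T_\infty$ should then follow from \eqref{conv:ex} by a continuous mapping argument, provided that the heavy-path length functional is almost surely continuous at $\mathbf{e}$ on $C[0,1]$ with the uniform topology. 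This continuity is not automatic: one must check that a.s. no ``tied'' branchpoints occur on the heavy path and that small perturbations of the excursion can only alter ranks at branchpoints of very small mass. For moment convergence, the universal Gaussian tail bound \eqref{boundh} applied to $L_n\le H_n$ provides the required uniform integrability, so distributional convergence automatically upgrades to convergence of all moments.

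Next I would exploit self-similarity. The CRT of mass $m$ is distributed as the CRT of mass $1$ with distances rescaled by $\sqrt m$. Peeling off the first branchpoint along the heavy path gives
\begin{equation*}
T_\infty \stackrel{d}{=} \Delta + \sqrt{M}\,T_\infty',
\end{equation*}
with $\Delta$ the root--to--first-branchpoint distance, $M\in[1/2,1]$ the mass fraction entering the heavy subtree, and $T_\infty'$ an independent copy of $T_\infty$. Iterating this decomposition along the heavy spine produces the fragmentation representation
\begin{equation*}
T_\infty = \sum_{k\ge 0}\Delta_k\prod_{j<k}\sqrt{M_j},
\end{equation*}
which after a standard time-change (marking the heavy spine by $-\tfrac12\log$ of cumulative mass loss) recasts as an exponential functional
\begin{equation*}
T_\infty \stackrel{d}{=} \int_0^\infty e^{-\xi_s/2}\,ds
\end{equation*}
of a pure-jump subordinator $(\xi_s)$ whose Lévy measure is the dislocation measure of Aldous--Pitman's self-similar fragmentation of the Brownian CRT (see \cite{bertoin_hom,bertoin_self}). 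This is precisely the kind of stochastic fixed-point equation \eqref{perp} that is to be referenced. The Laplace exponent $\Phi$ of $\xi$ can be computed against the Aldous--Pitman dislocation measure; a beta-type integral then produces the Gauss hypergeometric representation $\Phi(q)=\tfrac{4}{\sqrt\pi}\,{}_2F_1(-\tfrac12,\tfrac32-q;\tfrac12;\tfrac12)$. Finally, the moment formula follows by applying the Carmona--Petit--Yor identity \cite{carpetyor} to the exponential functional, yielding $\mathbf{E}\{T_\infty^k\}=k!/\prod_{j=1}^k\Phi(j/2)$.

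The hardest step, I expect, is the identification of the Lévy/dislocation measure and the explicit evaluation of $\Phi$: translating ``at each branchpoint, follow the side of larger mass'' into an explicit subordinator requires carefully integrating Aldous's size-biased splitting law for the Brownian CRT against the indicator that the heavier side is retained, and then performing the hypergeometric identification. Once this is in hand, the Carmona--Petit--Yor formula is immediate, and the passage from the CRT to the discrete trees rests only on the (more routine, though not trivial) continuity and uniform-integrability arguments outlined above.
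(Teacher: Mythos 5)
Your plan is correct and follows essentially the same route as the paper's Appendix D: pass to the Brownian CRT via \eqref{conv:ex} and a carefully verified continuous-mapping argument (the paper does this via superlevel sets of the excursion and the a.s.\ regularity of Brownian local minima), realise the limit as the exponential functional $\int_0^\infty e^{-\xi_s/2}\,ds$ of the subordinator governing mass decay along the heavy spine of the CRT fragmentation, and apply Carmona--Petit--Yor for the moments; your use of $L_n\le H_n$ together with \eqref{boundh} to get the uniform integrability that upgrades distributional to moment convergence is a correct point the paper leaves implicit. One caveat: the Brownian CRT has no discrete \emph{first} branchpoint along the heavy spine --- branchpoints are dense and the mass process is a pure-jump subordinator with infinitely many small jumps --- so the one-step decomposition $T_\infty \stackrel{d}{=} \Delta + \sqrt{M}\,T_\infty'$ with $\Delta$ a ``distance to the first branchpoint'' is not well-defined; the paper's fixed-point equation \eqref{perp} instead cuts when the heavy mass first drops below a fixed level $r\in(0,1)$, and your subsequent passage to the exponential functional of a subordinator is the formulation that makes the iteration rigorous.
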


\medskip \textbf{Remark.} It turns out that, in distribution, $T_\infty = \int_0^\infty e^{-\frac 1 2 \xi(t)} dt$ for some 
non-negative subordinator $\xi(t), t \geq 0$. In Theorem \ref{thm:unif_conv_exc}, we also state 
functional limit theorems (after rescaling) for the quantities 
$$P_n(k) = \{N(v) : v \text{ has distance } k \text{ from the root  and }\kappa(v) =1\ldots 1 \}, \quad k \geq 1$$ 
and $Q_n(\ell) = \inf \{ k \geq 0: P_n(k) \leq \ell \}, 1 \leq \ell \leq n$. See Displays \eqref{convP} and \eqref{convQ}. The limiting functions can be expressed in terms of $\xi$ involving a random time-change.

\medskip It is natural to compare $L_n$ to the height $H_n$. In particular, since $L_n \leq H_n$, the bound \eqref{boundh} on the tail of $H_n$ also applies to the right tail of $L_n$. 
From \eqref{convH}, it follows that the limit law for the height of $\tau_n$ has very little mass at zero: 
$$\lim_{x \to 0} \lim_{n \to \infty} x^2 \log \Prob{H_n \sigma \leq x \sqrt{2n}}  =  \lim_{x\to 0} x^2 \log \Prob{\sqrt{2} \sup_{t \in [0,1]} \mathbf e(t) \leq x} = -\pi^2.$$ Our next result shows that the decay of the distribution function of $T_\infty$ is considerably slower. Still, all its derivatives vanish at 0.

\begin{prop} \label{heavyp}
 We have 
\begin{align*} \frac{1}{\log 2} & \leq \liminf_{x \to 0} \frac{- \log  \Prob{T_\infty \leq x}}{\log^2 x} \leq \limsup_{x \to 0} \frac{- \log  \Prob{T_\infty \leq x }}{\log^2 x} \leq \frac{2}{ \log 2}.
\end{align*}

\end{prop}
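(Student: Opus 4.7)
My starting point is the subordinator representation of $T_\infty$ from Appendix D, $T_\infty \stackrel{d}{=} \int_0^\infty e^{-\xi(t)/2}\,dt$, which yields the perpetuity form of the stochastic fixed-point equation \eqref{perp},
\[
 T_\infty \stackrel{d}{=} W + V\, T_\infty', \qquad V = e^{-\xi(1)/2}, \quad W = \int_0^1 e^{-\xi(t)/2}\,dt,
\]
with $(V,W)$ independent of the copy $T_\infty'$, and $0 < V \leq W \leq 1$ almost surely. Iterating $k$ times gives $T_\infty = \sum_{i=1}^k W_i \prod_{j<i} V_j + \prod_{j \leq k} V_j \cdot T_\infty^{(k)}$ with iid copies $(V_i, W_i)$ and $T_\infty^{(k)}$ independent of them. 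Both bounds of the proposition will be obtained by choosing $k$ proportional to $\log(1/x)/\log 2$ and exploiting the geometric contraction by $1/2$ that this iteration produces.

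For the upper bound $\limsup \leq 2/\log 2$ (a lower bound on $\Prob{T_\infty \leq x}$), I build an event of prescribed probability on which $T_\infty \leq Cx$. Specifically, for $i \leq k$ I consider the event that $V_i \leq 1/2$ and $W_i \leq x \cdot 2^{-i+1}$: on this intersection, the partial sum $\sum_i W_i \prod_{j<i} V_j$ is bounded geometrically by $O(x)$, while the tail $\prod_{j \leq k} V_j \cdot T_\infty^{(k)}$ is at most $2^{-k} T_\infty^{(k)}$, which is $O(x)$ provided $2^k x$ is bounded below, say $2^k x \asymp 1$, forcing $k \asymp \log(1/x)/\log 2$. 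The probability of each single-step event is bounded below using the strong Markov property of $\xi$ applied at the first jump of size at least $2\log(2^{i}/x)$, which occurs by time $x \cdot 2^{-i}$ with probability at least a constant multiple of $x\cdot 2^{-i} \cdot \bar\nu(2\log(2^i/x))$. Summing the logarithms of these $k$ factors and using that $k = \Theta(\log(1/x)/\log 2)$ yields an exponent of order $\log^2(1/x)/\log 2$, and a careful balancing of the geometric thresholds produces the constant $2/\log 2$.

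For the lower bound $\liminf \geq 1/\log 2$ (an upper bound on $\Prob{T_\infty \leq x}$), I use the dual inequality $T_\infty \geq V_1 V_2 \cdots V_k \cdot T_\infty^{(k)}$ together with the decomposition
\[
 \Prob{T_\infty \leq x} \leq \Prob{\text{some } V_i \leq 1/2} + \Prob{\text{all } V_i \geq 1/2,\ T_\infty^{(k)} \leq 2^k x},
\]
where setting $p_0 = \Prob{V \geq 1/2} \in (0,1)$, the second term equals $p_0^k \cdot F(2^k x)$ with $F$ the distribution function of $T_\infty$, while the first is $1 - p_0^k$. Choosing $k = \lfloor \log(1/x)/\log 2 \rfloor$ makes $2^k x = \Theta(1)$, so $F(2^k x)$ is a fixed constant and the bound reduces to controlling $1 - p_0^k$; refining this argument (iterating with a sharper threshold than $1/2$ and using tail information on $V$ derived from the explicit $\Phi$) converts the crude $1 - p_0^k$ bound into the sharp $\exp(-\log^2(1/x)/\log 2)$ rate. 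The \emph{main obstacle} is matching the precise constants: this requires tracking the lower-order $\log$-corrections arising from the heavy (polynomial) tail of $V$, which is a consequence of the large-$q$ asymptotic $\Phi(q) \sim c\sqrt q$ extractable from the hypergeometric formula of Theorem \ref{conv:hp}. The ultimate appearance of the factor $\log 2$ in the denominator is structural, encoding the binary contraction rate $2^{-1}$ of the perpetuity, and the factor-of-two gap between $1/\log 2$ and $2/\log 2$ reflects the present limitations of the matching between iteration depth and probability cost per step.
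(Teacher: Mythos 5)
Your overall strategy (iterate the perpetuity $k\asymp \log(1/x)/\log 2$ times and balance the probability cost per step) captures the right intuition, and it is genuinely different from what the paper does: the paper proves Proposition~\ref{heavyp} entirely at the \emph{discrete} level, without touching the subordinator. Concretely, the paper proves a clean lemma about series $Z_\rho = \sum_{i\ge 0}\rho^i Y_i$ with iid $Y_i$ having power-law lower tails, showing $\limsup_{x\to 0}\frac{-\log\Prob{Z_\rho\le x}}{\log^2 x}\le\frac{\alpha}{\log(1/\rho)}$ and $\liminf\ge\frac{\alpha}{2\log(1/\rho)}$. It then sandwiches $T_\infty$ between two such series: using Theorem~\ref{thm:tailbounds2} on the discrete $N_{2+}$ it gets $T_\infty\le C\sum_{i\ge 0}(1-\delta)^{i/2}E_i$ (exponentials) for any $\delta<1/2$, and via a half-normal stable-limit argument it gets $T_\infty\ge c\sum_{i\ge 1}a^{-i/2}|\mathcal N_i|$ for any $a>2$. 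Letting $\delta\uparrow 1/2$ and $a\downarrow 2$ gives the two constants. The crucial structural point is that exponentials and half-normals have \emph{honest power-law lower tails at $0$}, which is exactly what the abstract lemma needs.

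That structural point is where your argument breaks. For the upper bound (lower bound on $\Prob{T_\infty\le x}$) you invoke ``the first jump of $\xi$ of size at least $2\log(2^i/x)$'' and use $\bar\nu$ at that level. But the L\'evy measure $\tilde\nu$ of $\xi$ is the push-forward of $\nu(dx)=2(2\pi x^3(1-x)^3)^{-1/2}\mathbf 1_{[1/2,1)}(x)\,dx$ under $x\mapsto -\log x$, so $\tilde\nu$ is supported on $(0,\log 2]$: \emph{all jumps of $\xi$ are bounded by $\log 2$}. For small $x$ the quantity $2\log(2^i/x)$ exceeds $\log 2$ by a large margin, so such single jumps have probability zero and your event construction is vacuous. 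Large values of $\xi$ arise from an \emph{accumulation} of many small jumps, which makes $\Prob{V\le u}=\Prob{\xi(1)\ge 2\log(1/u)}$ decay super-polynomially (roughly $\exp(-c\log(1/u)\log\log(1/u))$), not like $u^\alpha$. This breaks the tail-balancing computation that is supposed to produce $2/\log 2$; your threshold $W_i\le x\cdot 2^{-i+1}$ also makes the constraint \emph{more} restrictive with $i$ even though $W_i$ is multiplied by a smaller prefactor, which is the opposite of the correct allocation.

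Your lower bound (upper bound on $\Prob{T_\infty\le x}$) has a separate gap. The decomposition
\[\Prob{T_\infty\le x}\le \Prob{\text{some }V_i\le 1/2}+\Prob{\text{all }V_i\ge 1/2,\;T_\infty^{(k)}\le 2^kx}\]
has first term $1-p_0^k\to 1$ as $k\to\infty$; the resulting bound does not go to $0$ at all, so it cannot be ``refined'' -- it needs to be replaced. Moreover $V_i\ge 1/2$ is certainly not an almost sure event (the accumulated small jumps of $\xi$ in one unit of homogeneous time can make $\xi(1)>2\log 2$), so the dropping of the $W_i$ terms is throwing away exactly the information that makes $T_\infty$ unlikely to be small. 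The paper's route avoids both difficulties by never leaving the discrete tree: Theorem~\ref{thm:tailbounds2} provides a power-law lower bound on the second-subtree tail, which stochastically dominates a geometric waiting time, and the sum of geometrically-many exponentials feeds directly into the lemma.

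\end{document}
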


The proof of the proposition relies on sandwiching the random variable $L_n / \sqrt{n}$ between two quantities admitting series representations of the form $\sum_{i=0}^\infty  \rho^i Y_i$ for some $0 < \rho < 1$ and a sequence of independent and identically distributed random variables $Y_1, Y_2, \ldots$ It is presented in Appendix C. 

\section*{Acknowledgements}
The research of Luc Devroye was in part supported by an NSERC Discovery grant. The research of Cecilia Holmgren was 
partially supported by a grant of the Swedish Research Council which also allowed her visit at McGill University in November 2015 during which most of the work was carried out. The research of Henning Sulzbach was supported by a Feodor Lynen Research Fellowship of the Alexander von Humboldt Foundation.  The authors would also like to thank Louigi Addario-Berry and Christina Goldschmidt for valuable discussions in particular regarding the arguments involved in the study of the heavy path.

\setlength{\bibsep}{0.3em}
{\bibliographystyle{plainnat} 
\bibliography{gw}

\begin{thebibliography}{43}
\providecommand{\natexlab}[1]{#1}
\providecommand{\url}[1]{\texttt{#1}}
\expandafter\ifx\csname urlstyle\endcsname\relax
  \providecommand{\doi}[1]{doi: #1}\else
  \providecommand{\doi}{doi: \begingroup \urlstyle{rm}\Url}\fi

\bibitem[Addario-Berry et~al.(2013)Addario-Berry, Devroye, and
  Janson]{adddevjan13}
L.~Addario-Berry, L.~Devroye, and S.~Janson.
\newblock Sub-{G}aussian tail bounds for the width and height of conditioned
  {G}alton-{W}atson trees.
\newblock \emph{Ann. Probab.}, 41\penalty0 (2):\penalty0 1072--1087, 2013.

\bibitem[Aldous(1991{\natexlab{a}})]{al1}
D.~Aldous.
\newblock The continuum random tree. {I}.
\newblock \emph{Ann. Probab.}, 19\penalty0 (1):\penalty0 1--28,
  1991{\natexlab{a}}.

\bibitem[Aldous(1991{\natexlab{b}})]{al2}
D.~Aldous.
\newblock The continuum random tree. {II}. {A}n overview.
\newblock In \emph{Stochastic Analysis ({D}urham, 1990)}, volume 167 of
  \emph{London Math. Soc. Lecture Note Ser.}, pages 23--70. Cambridge Univ.
  Press, Cambridge, 1991{\natexlab{b}}.

\bibitem[Aldous(1991{\natexlab{c}})]{aldous91}
D.~Aldous.
\newblock Asymptotic fringe distributions for general families of random trees.
\newblock \emph{Ann. Appl. Probab.}, 1\penalty0 (2):\penalty0 228--266,
  1991{\natexlab{c}}.

\bibitem[Aldous(1993)]{al3}
D.~Aldous.
\newblock The continuum random tree. {III}.
\newblock \emph{Ann. Probab.}, 21\penalty0 (1):\penalty0 248--289, 1993.

\bibitem[Aldous and Pitman(1998)]{alpi98}
D.~Aldous and J.~Pitman.
\newblock Tree-valued {M}arkov chains derived from {G}alton-{W}atson processes.
\newblock \emph{Ann. Inst. Henri Poincar\'e Probab. Stat.}, 34\penalty0
  (5):\penalty0 637--686, 1998.

\bibitem[Aldous and Steele(2004)]{ob}
D.~Aldous and J.~M. Steele.
\newblock The {O}bjective {M}ethod: {P}robabilistic {C}ombinatorial
  {O}ptimization and {L}ocal {W}eak {C}onvergence.
\newblock In \emph{Probability on {D}iscrete {S}tructures}, volume 110 of
  \emph{Encyclopaedia Math. Sci.}, pages 1--72. Springer, Berlin, 2004.

\bibitem[Andrade et~al.(2005)Andrade, Herrmann, Andrade, and da~Silva]{andrade}
J.~S. Andrade, H.~J. Herrmann, R.~F.~S. Andrade, and L.~R. da~Silva.
\newblock Apollonian {N}etworks: Simultaneously {S}cale-{F}ree, {S}mall
  {W}orld, {E}uclidean, {S}pace {F}illing, and with {M}atching {G}raphs.
\newblock \emph{Phys. Rev. Lett.}, 94:\penalty0 018702, 2005.

\bibitem[Athreya and Ney(1972)]{AN}
K.~B. Athreya and P.~E. Ney.
\newblock \emph{Branching Processes}.
\newblock Springer-Verlag, New York-Heidelberg, 1972.
\newblock Die Grundlehren der mathematischen Wissenschaften, Band 196.

\bibitem[Benjamini and Schramm(2001)]{benjschramm}
I.~Benjamini and O.~Schramm.
\newblock Recurrence of distributional limits of finite planar graphs.
\newblock \emph{Electron. J. Probab.}, 6:\penalty0 no. 23, 13 pp. (electronic),
  2001.

\bibitem[Bennies and Kersting(2000)]{benker00}
J.~Bennies and G.~Kersting.
\newblock A random walk approach to {G}alton-{W}atson trees.
\newblock \emph{J. Theoret. Probab.}, 13\penalty0 (3):\penalty0 777--803, 2000.

\bibitem[Bertoin(2001)]{bertoin_hom}
J.~Bertoin.
\newblock Homogeneous fragmentation processes.
\newblock \emph{Probab. Theory Related Fields}, 121\penalty0 (3):\penalty0
  301--318, 2001.

\bibitem[Bertoin(2002)]{bertoin_self}
J.~Bertoin.
\newblock Self-similar fragmentations.
\newblock \emph{Ann. Inst. Henri Poincar\'e Probab. Stat.}, 38\penalty0
  (3):\penalty0 319--340, 2002.

\bibitem[Bertoin and Yor(2005)]{bertyor}
J.~Bertoin and M.~Yor.
\newblock Exponential functionals of {L}{\'e}vy processes.
\newblock \emph{Probab. Surv.}, 2:\penalty0 191--212, 2005.

\bibitem[Billingsley(1968)]{bil68}
P.~Billingsley.
\newblock \emph{{Convergence of {P}robability {M}easures.}}
\newblock {New York-London-Sydney-Toronto: John Wiley and Sons, Inc. XII, 253
  p. }, 1968.

\bibitem[Birkhoff(1930)]{birk}
G.~D. Birkhoff.
\newblock On the number of ways of colouring a map.
\newblock \emph{Proceedings of the Edinburgh Mathematical Society}, 2:\penalty0
  83--91, 1930.

\bibitem[Broutin and Devroye(2006)]{nic_dev}
N.~Broutin and L.~Devroye.
\newblock Large deviations for the weighted height of an extended class of
  trees.
\newblock \emph{Algorithmica}, 46\penalty0 (3-4):\penalty0 271--297, 2006.

\bibitem[Carmona et~al.(2001)Carmona, Petit, and Yor]{carpetyor}
P.~Carmona, F.~Petit, and M.~Yor.
\newblock Exponential functionals of {L}\'evy processes.
\newblock In \emph{L\'evy Processes}, pages 41--55. Birkh\"auser Boston,
  Boston, MA, 2001.

\bibitem[Collevecchio et~al.(2016)Collevecchio, Mehrabian, and Wormald]{cmw}
A.~Collevecchio, A.~Mehrabian, and N.~Wormald.
\newblock Longest paths in random {A}pollonian networks and largest $r$-ary
  subtrees of random $d$-ary recursive trees.
\newblock \emph{J. Appl. Probab.}, 53\penalty0 (3):\penalty0 846--856, 2016.

\bibitem[Devroye(1999)]{dev_split}
L.~Devroye.
\newblock Universal limit laws for depths in random trees.
\newblock \emph{SIAM J. Comput.}, 28\penalty0 (2):\penalty0 409--432, 1999.

\bibitem[Devroye(2011)]{devroye11}
L.~Devroye.
\newblock A note on the probability of cutting a {G}alton-{W}atson tree.
\newblock \emph{Electron. J. Probab.}, 16:\penalty0 no. 72, 2001--2019, 2011.

\bibitem[Duquesne()]{duq}
T.~Duquesne.
\newblock The coding of compact real trees by real valued functions.
\newblock Preprint, arXiv:0604106.

\bibitem[Dwass(1969)]{dwass69}
M.~Dwass.
\newblock The total progeny in a branching process and a related random walk.
\newblock \emph{J. Appl. Probab.}, 6:\penalty0 682--686, 1969.

\bibitem[Ebrahimzadeh et~al.(2014)Ebrahimzadeh, Farczadi, Gao, Mehrabian, Sato,
  Wormald, and Zung]{ebra1}
E.~Ebrahimzadeh, L.~Farczadi, P.~Gao, A.~Mehrabian, C.~M. Sato, N.~Wormald, and
  J.~Zung.
\newblock On longest paths and diameter in random {A}pollonian networks.
\newblock \emph{Random Structures Algorithms}, 45:\penalty0 703--725, 2014.

\bibitem[Evans(2008)]{evans}
S.~N. Evans.
\newblock \emph{Probability and {R}eal {T}rees}, volume 1920 of \emph{Lecture
  Notes in Mathematics}.
\newblock Springer, Berlin, 2008.
\newblock Lectures from the 35th Summer School on Probability Theory held in
  Saint-Flour, July 6--23, 2005.

\bibitem[Flajolet and Odlyzko(1982)]{flod}
P.~Flajolet and A.~Odlyzko.
\newblock The average height of binary trees and other simple trees.
\newblock \emph{J. Comput. System Sci.}, 25\penalty0 (2):\penalty0 171--213,
  1982.

\bibitem[Frieze and Tsourakakis(2014)]{frt}
A.~Frieze and C.~E. Tsourakakis.
\newblock Some properties of random {A}pollonian networks.
\newblock \emph{Internet Math.}, 10\penalty0 (1-2):\penalty0 162--187, 2014.

\bibitem[Gr{\"u}nbaum(1967)]{grun}
B.~Gr{\"u}nbaum.
\newblock \emph{Convex Polytopes}.
\newblock With the cooperation of Victor Klee, M. A. Perles and G. C. Shephard.
  Pure and Applied Mathematics, Vol. 16. Interscience Publishers John Wiley \&
  Sons, Inc., New York, 1967.

\bibitem[Janson(2012)]{janson2012a}
S.~Janson.
\newblock Simply generated trees, conditioned {G}alton-{W}atson trees, random
  allocations and condensation.
\newblock \emph{Probab. Surv.}, 9:\penalty0 103--252, 2012.

\bibitem[Janson(2016)]{janson2013}
S.~Janson.
\newblock {Asymptotic normality of fringe subtrees and additive functionals in
  conditioned Galton-Watson trees}.
\newblock \emph{Random Structures Algorithms}, 48\penalty0 (1):\penalty0
  57--101, 2016.

\bibitem[Kechris(1995)]{kechris}
A.~S. Kechris.
\newblock \emph{Classical {D}escriptive {S}et {T}heory}, volume 156 of
  \emph{Graduate Texts in Mathematics}.
\newblock Springer-Verlag, New York, 1995.

\bibitem[Kennedy(1975)]{kennedy75}
D.~P. Kennedy.
\newblock The {G}alton-{W}atson process conditioned on the total progeny.
\newblock \emph{J. Appl. Probab.}, 12\penalty0 (4):\penalty0 800--806, 1975.

\bibitem[Kesten(1986)]{kesten86}
H.~Kesten.
\newblock Subdiffusive behavior of random walk on a random cluster.
\newblock \emph{Ann. Inst. Henri Poincar\'e Probab. Stat.}, 22\penalty0
  (4):\penalty0 425--487, 1986.

\bibitem[Kolchin(1986)]{kolchin86}
V.~F. Kolchin.
\newblock \emph{Random Mappings}.
\newblock Translation Series in Mathematics and Engineering. Optimization
  Software, Inc., Publications Division, New York, 1986.
\newblock ISBN 0-911575-16-2.
\newblock Translated from the Russian, With a foreword by S. R. S. Varadhan.

\bibitem[Le~Gall(2005)]{legallrandomtrees}
J.-F. Le~Gall.
\newblock Random trees and applications.
\newblock \emph{Probab. Surv.}, 2:\penalty0 245--311, 2005.

\bibitem[Lyons et~al.(1995)Lyons, Pemantle, and Peres]{lpp}
R.~Lyons, R.~Pemantle, and Y.~Peres.
\newblock Conceptual proofs of {$L\log L$} criteria for mean behavior of
  branching processes.
\newblock \emph{Ann. Probab.}, 23\penalty0 (3):\penalty0 1125--1138, 1995.

\bibitem[Marckert and Mokkadem(2003)]{marmok}
J.-F. Marckert and A.~Mokkadem.
\newblock The depth first processes of {G}alton-{W}atson trees converge to the
  same {B}rownian excursion.
\newblock \emph{Ann. Probab.}, 31\penalty0 (3):\penalty0 1655--1678, 2003.

\bibitem[Meir and Moon(1978)]{meirmoon}
A.~Meir and J.~W. Moon.
\newblock On the altitude of nodes in random trees.
\newblock \emph{Canad. J. Math.}, 30\penalty0 (5):\penalty0 997--1015, 1978.

\bibitem[Petrov(1975)]{petrov75}
V.~V. Petrov.
\newblock \emph{Sums of Independent Random Variables}.
\newblock Springer-Verlag, New York-Heidelberg, 1975.
\newblock Translated from the Russian by A. A. Brown, Ergebnisse der Mathematik
  und ihrer Grenzgebiete, Band 82.

\bibitem[{R\'enyi} and {Szekeres}(1967)]{rensze67}
A.~{R\'enyi} and G.~{Szekeres}.
\newblock {On the height of trees.}
\newblock \emph{{J. Aust. Math. Soc.}}, 7:\penalty0 497--507, 1967.

\bibitem[Takeo(1960)]{takeo}
F.~Takeo.
\newblock On triangulated graphs. {I}.
\newblock \emph{Bull. Fukuoka Gakugei Univ. III}, 10:\penalty0 9--21, 1960.

\bibitem[Vervaat(1979)]{Vervaat1979a}
W.~Vervaat.
\newblock {On a stochastic difference equation and a representation of
  non-negative infinitely divisible random variables}.
\newblock \emph{Adv. in Appl. Probab.}, 11:\penalty0 750--783, 1979.

\bibitem[Zhou et~al.(2005)Zhou, Yan, and Wang]{zhou}
T.~Zhou, G.~Yan, and B.-H. Wang.
\newblock Maximal planar networks with large clustering coefficient and
  power-law degree distribution.
\newblock \emph{Phys. Rev. E}, 71:\penalty0 046141, 2005.

\end{thebibliography}

\section*{Appendix A: proofs of Theorems \ref{thm:tailbounds} and \ref{thm:tailbounds2} } \label{apa}

From \eqref{ident22} and \eqref{LLTgen}, it follows that there exists $\omega_1> 0$ such that 
\begin{align} \label{eq1} \sup_{k > 0, k \in \mathbb N h -n} \frac 1 k  \Prob{|\mathcal T_1| + \ldots + |\mathcal T_k| = n} \leq \omega_1 n^{-3/2}. \end{align}
Similarly, there exist $n_5 \in \N$ and $\omega_2 > 0$ such that, for all $n \geq n_5$ and $k \leq \sqrt{n}$ with $n-k \in \N h$, 
\begin{align} \label{eq2}  \frac 1 k \Prob{|\mathcal T_1| + \ldots + |\mathcal T_k| = n} \geq \omega_2 n^{-3/2}. \end{align}

The following two lemmas provide the tools necessary to prove both theorems.

\begin{lem} \label{lem:gam}
For all $\ell, t, n \geq 1$ and $1 \leq k < \ell$, 
\begin{align*}
\Prob{|\mathcal T_1| \geq t, \ldots, |\mathcal T_\ell| \geq t, |\mathcal T_1| + \ldots + |\mathcal T_\ell| = n} \leq \frac{\omega_1^\ell 16^{\ell - 1}}{n^{3/2} t^{(\ell-1)/2}}, 
\end{align*}
and
\begin{align*}
\Prob{|\mathcal T_1| \geq t, \ldots, |\mathcal T_k| \geq t, |\mathcal T_1| + \ldots + |\mathcal T_\ell| = n} \leq \frac{\omega_1^{k+1} 16^{k }(\ell - k)}{n^{3/2} t^{(k-1)/2}} \frac{1}{\sqrt{\min(kt, \ell - k)}}.
\end{align*}
\end{lem}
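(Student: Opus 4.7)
The plan rests on two tools. First, the pointwise bound $\Prob{|\mathcal T_1|+\ldots+|\mathcal T_j|=m}\leq \omega_1\,j\,m^{-3/2}$ that follows directly from \eqref{eq1} (and, for $j=1$, gives $\Prob{|\mathcal T|=m}\leq \omega_1 m^{-3/2}$). Second, the elementary tail estimate
\[
\sum_{i\geq s} i^{-3/2}\leq \frac{2\sqrt 2}{\sqrt s},\qquad s\in\mathbb N,
\]
valid for every $s\geq 1$; the borderline case $s=1$ is the inequality $\zeta(3/2)<2\sqrt 2$, and this tightness at $s=1$ is precisely what allows the constant $16$ to be propagated without leakage.

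For the first inequality I would write $P_\ell(t,n)$ for the left-hand side and induct on $\ell\geq 2$. In the base case $\ell=2$, expand as a convolution
\[
P_2(t,n)=\sum_{n_1=t}^{n-t}\Prob{|\mathcal T|=n_1}\Prob{|\mathcal T|=n-n_1}\leq \omega_1^2\sum_{n_1=t}^{n-t}\frac{1}{n_1^{3/2}(n-n_1)^{3/2}},
\]
split the sum at $n/2$, bound the larger argument by $(2/n)^{3/2}$ on each half, and invoke the tail estimate; by symmetry the two halves contribute equally and the total is $16\,\omega_1^2/(n^{3/2}\sqrt t)$. For the inductive step, condition on $|\mathcal T_\ell|=k$,
\[
P_\ell(t,n)=\sum_{k=t}^{n-(\ell-1)t}\Prob{|\mathcal T|=k}\,P_{\ell-1}(t,n-k),
\]
apply $\Prob{|\mathcal T|=k}\leq\omega_1 k^{-3/2}$ and the inductive hypothesis, then split at $n/2$ exactly as in the base case; one factor of $16$ is picked up per step, producing the claimed $16^{\ell-1}$.

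For the second inequality I would condition on $M:=|\mathcal T_{k+1}|+\ldots+|\mathcal T_\ell|$:
\[
\mathrm{LHS}=\sum_{m=\ell-k}^{n-kt}\Prob{M=m}\,\Prob{|\mathcal T_1|\geq t,\ldots,|\mathcal T_k|\geq t,\ \textstyle\sum_{i=1}^k|\mathcal T_i|=n-m}.
\]
Bound $\Prob{M=m}\leq \omega_1(\ell-k)m^{-3/2}$ using the convolution form of \eqref{eq1}, and apply the already-proved first inequality to the second factor (the case $k=1$ is handled directly, exactly as in the $\ell=2$ base case). Splitting the resulting double sum at $n/2$ once more, the half $m\leq n/2$ gives rise to a tail sum starting at $m=\ell-k$, hence contributes the factor $(\ell-k)^{-1/2}$; the half $m>n/2$ gives a tail sum in $n-m$ starting at $kt$, hence contributes $(kt)^{-1/2}$. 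Adding the two halves and majorizing $a^{-1/2}+b^{-1/2}\leq 2\min(a,b)^{-1/2}$ produces the factor $(\ell-k)/\sqrt{\min(kt,\ell-k)}$, with an overall constant $\omega_1^{k+1}16^k$.

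The main obstacle is the bookkeeping of constants: a naive application of $\sum_{i\geq s} i^{-3/2}\leq 3s^{-1/2}$ yields a constant close to $17$ per inductive step, which would blow the $16^{\ell-1}$ factor. Using instead the sharper $2\sqrt 2 \,s^{-1/2}$, verified by a small case-check at $s=1$, is what allows each inductive step to cost exactly a factor of $16$. The rest is straightforward convolution algebra and splitting sums at $n/2$.
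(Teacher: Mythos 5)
Your proposal is correct and takes essentially the same route as the paper: you reconstruct, via the tail estimate $\sum_{i\geq s}i^{-3/2}\leq 2\sqrt 2/\sqrt s$ together with a split at $n/2$, precisely the convolution bound $\sum_{k=a}^{n-b}(k(n-k))^{-3/2}\leq 16\,n^{-3/2}\min(a,b)^{-1/2}$ that the paper isolates as Lemma~\ref{lem:easy}, and then apply it once per tree. The only cosmetic differences are that you organize the first inequality as an induction on $\ell$ whereas the paper unrolls the nested sums directly, and in the second inequality you condition on $|\mathcal T_{k+1}|+\ldots+|\mathcal T_\ell|$ while the paper conditions on the complementary sum $|\mathcal T_1|+\ldots+|\mathcal T_k|$; the two are equivalent under the change of variable $j\mapsto n-j$.
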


\begin{lem} \label{lem:gam2}
There exists $c_1 > 0,  C_1 > 0$ satisfying the following property: for all $\ell \geq 1$, there exists $n_6 = n_6(\ell)$, such that, for all $n \geq n_6$ with $n-\ell \in \mathbb N h$, and $C_1 \leq t \leq n /  \ell - C_1$,
\begin{align*}
\Prob{|\mathcal T_1| \geq t, \ldots, |\mathcal T_\ell| \geq t, |\mathcal T_1| + \ldots + |\mathcal T_\ell| = n} \geq \frac{c_1^\ell }{n^{3/2} t^{(\ell-1)/2}}.
\end{align*}
Similarly, for $1 \leq k < \ell$, there exist constants $\tilde c_1, \tilde C_1$ (depending on $k, \ell$), such that, for $n$ sufficiently large with $n - \ell \in \N h$, and
$\tilde C_1 \leq t \leq n /  k - \tilde C_1$,
\begin{align*}
\Prob{|\mathcal T_1| \geq t, \ldots, |\mathcal T_k| \geq t, |\mathcal T_1| + \ldots + |\mathcal T_\ell| = n} \geq \frac{\tilde c_1}{n^{3/2} t^{(k-1)/2}}.
\end{align*}
\end{lem}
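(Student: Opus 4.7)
Plan. The strategy is to lower bound the probability by summing over an explicit family of lattice configurations $(n_1,\ldots,n_\ell)$ with $n_i \geq t$ and $\sum_i n_i = n$, and to apply the local limit theorem \eqref{LLT} pointwise. From \eqref{LLT} one obtains constants $\alpha_* > 0$ and $m_0 \in \mathbb N$ such that $\Prob{|\mathcal T| = m} \geq \alpha_* m^{-3/2}$ for every $m \geq m_0$ with $m \in \mathbb N h + 1$. Because $|\mathcal T_i|$ is supported on $\mathbb N h + 1$, prescribing $n_1,\ldots,n_{\ell - 1}$ in this lattice class together with the hypothesis $n - \ell \in \mathbb N h$ automatically places $n_\ell := n - \sum_{i<\ell} n_i$ in $\mathbb N h + 1$, so the LLT lower bound applies to every factor of $\prod_i \Prob{|\mathcal T_i| = n_i}$.

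For the first assertion I would choose a window $W = [t, t+w] \cap (\mathbb N h + 1)$ of width $w$ proportional to $t$, and let $(n_1,\ldots,n_{\ell - 1})$ range over $W^{\ell - 1}$. The residue $n_\ell$ then lies in $[t, n]$ as long as $(\ell - 1) w \leq n - \ell t$, which the hypothesis $t \leq n/\ell - C_1$ secures once $C_1$ is chosen large enough in terms of $h$ and $m_0$. This produces $|W|^{\ell - 1} \geq (c_2 t)^{\ell - 1}$ admissible tuples, each contributing at least a product of the form $\bigl(\alpha_* (O(t))^{-3/2}\bigr)^{\ell - 1} \cdot \alpha_* n^{-3/2}$; multiplying, the $t^{\ell - 1}$ combinatorial factor cancels $t^{3(\ell - 1)/2}$ down to the desired $t^{(\ell - 1)/2}$, yielding a lower bound of the form $c_1^\ell / (n^{3/2} t^{(\ell - 1)/2})$.

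For the second assertion the same scheme applies, except that the unconstrained tail $|\mathcal T_{k+1}|,\ldots,|\mathcal T_\ell|$ is pinned to a fixed value $j_0 \in I$ with $j_0 \geq m_0$, contributing a positive, $k$- and $\ell$-dependent constant $\prod_{i>k}\Prob{|\mathcal T_i| = j_0}$ that is absorbed into $\tilde c_1$. The first $k - 1$ subtrees again range over a window of width proportional to $t$, while $n_k$ absorbs the rest of $n$; since $n_k \leq n$, its contribution is bounded below by $\alpha_* n^{-3/2}$ via \eqref{LLT}. Multiplying gives the claimed $\tilde c_1/(n^{3/2} t^{(k-1)/2})$.

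\textbf{Main obstacle.} The chief difficulty is controlling the window width $w$ uniformly across the full range of $t$: as $t$ approaches the boundary $n/\ell - C_1$, the slack $n - \ell t$ shrinks toward $\ell C_1$, forcing $w$ down from order $t$ to order $C_1/(\ell - 1)$ and degrading the tuple count. Recovering the target $c_1^\ell$ form (with $c_1$ independent of $\ell$ in the first assertion) requires splitting the range at, say, $t = n/(2\ell)$: in the bulk $t \leq n/(2\ell)$ the window easily has width $\Theta(t)$, while in the boundary regime one must show that even the shrunken window still contains at least $\lfloor C_1/h \rfloor$ lattice points and verify that the resulting constants compose into $c_1^\ell$. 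This case analysis, together with ensuring the LLT lower bound from \eqref{LLT} is uniform over the various scales of $n_i$, is the main technical step; once it is in place, both assertions follow from the counting computation outlined above.
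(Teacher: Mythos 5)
Your approach is essentially the paper's: both express the probability as a sum of $\prod_i \Prob{|\mathcal T_i| = n_i}$ over admissible lattice tuples and apply a pointwise local-limit lower bound to each factor (your $\alpha_* m^{-3/2}$ plays the role of the paper's $\omega_2$ from \eqref{eq2}). The paper then sums over the \emph{entire} simplex $S_{\ell-1}$ by iterating the one-dimensional estimate of Lemma \ref{lem:easy}, while you restrict to a rectangular window and count lattice points. In the bulk regime, say $t \leq n/(2\ell)$, both routes yield the claim; your window count is in fact cleaner, because the paper's iteration, carried out carefully, accumulates a parasitic $\sqrt{(\ell-1)!}$ in the denominator (at the $j$th step the relevant $\max(a,b)$ in Lemma \ref{lem:easy} is $jt$, not $t$), which the paper's one-line ``following the same lines as in the previous proof'' glosses over.

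However, the step in your ``Main obstacle'' paragraph asserting that one can ``verify that the resulting constants compose into $c_1^\ell$'' is not a feasible step, and this is a genuine gap. When $t$ lies within a bounded distance of $n/\ell$ (which $C_1 \leq t \leq n/\ell - C_1$ with $C_1$ a fixed constant allows), every admissible $n_i$ lies within a bounded distance of $n/\ell$, so each factor $\Prob{|\mathcal T_i| = n_i}$ is of order $(n/\ell)^{-3/2}$ and the number of admissible $\ell$-tuples stays bounded. The lower bound the window method produces is therefore of order $n^{-3\ell/2}$, while the target $n^{-3/2} t^{-(\ell-1)/2}$ is of order $n^{-(\ell+2)/2}$; since $3\ell/2 > (\ell+2)/2$ for $\ell \geq 2$, the exponents disagree and no choice of $c_1$ closes the gap. (The paper's own argument inherits the same difficulty through the lower bound of Lemma \ref{lem:easy}, whose hypothesis $a+b\leq n$ is too permissive: for $a=b=n/2$ the sum collapses to a single term of order $n^{-3}$, not the stated $\Theta(n^{-3/2}\max(a,b)^{-1/2}) = \Theta(n^{-2})$.) Your instinct to stress-test the boundary was right; the correct conclusion is that both proofs only establish the bound for $t$ at most a fixed fraction of $n/\ell$, which is all the downstream applications (Theorem \ref{thm:tailbounds2} and Section \ref{sec:distances}) actually use.
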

 
The two lemmas rely on the following simple result. 
 
 \begin{lem} \label{lem:easy} There exists a constant $\chi > 0$ such that, for all $a, b \geq 1, n\geq 2$ with $a + b \leq n$, we have
$$\frac{\chi}{n^{3/2} \sqrt{\max(a,b)}} \leq \sum_{k = a}^{n-b} (k(n-k))^{-3/2} \leq \frac{16}{n^{3/2} \sqrt{\min(a,b)}}$$
\end{lem}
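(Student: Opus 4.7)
The proof is a direct exercise in bounding a sum by integrals, combined with the fact that each summand $(k(n-k))^{-3/2}$ is essentially controlled by $k^{-3/2}$ in the lower half and by $(n-k)^{-3/2}$ in the upper half. I would split the summation range at $k = \lfloor n/2 \rfloor$ and estimate the two halves symmetrically.

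For the \textbf{upper bound}, in the lower half ($a \leq k \leq \lfloor n/2 \rfloor$) one has $n - k \geq n/2$, hence $(n-k)^{-3/2} \leq 2^{3/2} n^{-3/2}$. The remaining factor $\sum_{k \geq a} k^{-3/2}$ is bounded by the standard integral comparison
\[
\sum_{k \geq a} k^{-3/2} \;\leq\; a^{-3/2} + \int_a^\infty x^{-3/2}\,dx \;=\; a^{-3/2} + 2 a^{-1/2} \;\leq\; 2\sqrt{2}\, a^{-1/2}
\]
valid for all $a \geq 1$. This yields a bound of $8 n^{-3/2} a^{-1/2}$ for the lower half. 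The analogous bound $8 n^{-3/2} b^{-1/2}$ for the upper half follows by the substitution $k \mapsto n - k$. Adding and using $a^{-1/2} + b^{-1/2} \leq 2/\sqrt{\min(a,b)}$ delivers the constant $16$ in the claimed upper bound.

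For the \textbf{lower bound}, I may assume without loss of generality that $a \leq b$, so $\max(a,b) = b$. The dominant contribution to the sum comes from indices where one of the factors $k$ or $n-k$ is comparable to $b$. Concretely, I would restrict the sum to $I := [n - 2b, n - b] \cap [a, n-b]$, a range in which $n - k \in [b, 2b]$ and $k \leq n$, so every summand is at least $(2 b n)^{-3/2} = c_0 b^{-3/2} n^{-3/2}$. Whenever $|I|$ is of order $b$ (which is the case as soon as $a + 2b \leq n$, since then $\max(a, n - 2b) = n - 2b$), the sum is at least of order $b^{-1/2} n^{-3/2} = 1/(n^{3/2}\sqrt{\max(a,b)})$, as desired.

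The \textbf{main obstacle} is the boundary regime $a + 2b > n$, in which $|I|$ collapses and only a handful of terms of the sum survive. Here one must estimate the remaining terms directly: using $a \leq b$ together with $a + b \leq n$, the surviving indices $k \in [a, n-b]$ satisfy $n - k \leq n - a$ and $k \leq n - b \leq n$, and one checks by elementary manipulation that $(k(n-k))^{-3/2}$ is still comparable to $b^{-3/2} n^{-3/2}$ on at least one index. Matching this against $1/(n^{3/2}\sqrt{b})$ via a short case distinction closes the lemma. This boundary case is not difficult but is the one place where a bit of care is required; elsewhere the argument is purely mechanical integral comparison.
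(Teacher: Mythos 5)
Your upper bound argument is correct and is in spirit the same as the paper's one‑line appeal to the Euler--Maclaurin formula: split the sum at the midpoint and compare each half with an integral. (One small slip: the intermediate inequality $a^{-3/2} + 2a^{-1/2} \leq 2\sqrt{2}\,a^{-1/2}$ fails at $a=1$, since the left side equals $3$. The bound $\sum_{k\geq a}k^{-3/2}\leq 2\sqrt{2}\,a^{-1/2}$ is nevertheless true for all integers $a\geq 1$, because $\zeta(3/2) < 2\sqrt{2}$ handles $a=1$, and for $a\geq 2$ one has $\sum_{k\geq a}k^{-3/2}\leq \int_{a-1}^{\infty}x^{-3/2}\,dx = 2(a-1)^{-1/2}\leq 2\sqrt{2}\,a^{-1/2}$.) The lower bound in the regime $a+2b\leq n$ is also fine.

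The lower bound in the boundary regime $a+2b>n$ (with $a\leq b$) has a genuine gap, and the hand‑waving at the end of your proposal cannot be made to work. On $[a,n-b]$ the largest you can guarantee any single summand to be is of order $(bn)^{-3/2}=b^{-3/2}n^{-3/2}$, which is smaller by a factor $b^{-1}$ than the target $b^{-1/2}n^{-3/2}=1/(n^{3/2}\sqrt{\max(a,b)})$; and since in this regime $b>n/3$, that factor cannot be absorbed into a universal constant. In fact the lower bound of the lemma, as stated, fails here: take $a=b=n/2$ with $n$ even (so $a+b\leq n$). Then the sum consists of the single term $((n/2)(n/2))^{-3/2}=8n^{-3}$, while the claimed lower bound is $\chi\,n^{-3/2}(n/2)^{-1/2}=\sqrt{2}\,\chi\,n^{-2}$. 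No fixed $\chi>0$ satisfies $\sqrt{2}\,\chi\,n^{-2}\leq 8n^{-3}$ for all $n$. Thus you have not overlooked an easy case distinction; you have run into a place where the statement needs an additional hypothesis (e.g.\ $\max(a,b)\leq n/4$, which keeps the summation window from collapsing to a single midpoint term). The paper's own proof is only a one‑line citation and does not address this; the lemma is used in Lemma~\ref{lem:gam2} in a regime where the offending corner contributes negligibly, so the intended statement is the restricted one.
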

\begin{proof} Both bounds follow easily from an application of the Euler-Maclaurin-formula using the symmetry of the sequence.
\end{proof}

\begin{proof} [Proof of Lemma \ref{lem:gam}]
Let $S_{\ell}  := \{(x_1, \ldots, x_{\ell}) : x_1, \ldots, x_{\ell} \geq t,  x_1 + \ldots + x_{\ell} \leq n-t\}$. Using \eqref{eq1}, we have 
\begin{align*}
& \Prob{|\mathcal T_1| \geq t, \ldots, |\mathcal T_\ell| \geq t, |\mathcal T_1| + \ldots + |\mathcal T_\ell| = n} \\
& = \sum_{(k_1, \ldots, k_{\ell -1}) \in S_{\ell - 1}} \Prob{|\mathcal T_1| = k_1, \ldots, |T_{\ell-1}| = k_{\ell-1}, |\mathcal T_\ell| = n - k_1 - \ldots - k_{\ell-1}} \\
& \leq \omega_1^\ell \sum_{(k_1, \ldots, k_{\ell -1}) \in S_{\ell  -1}} (k_1 \cdots k_{\ell-1}(n- k_1 - \ldots - k_{\ell -1}))^{-3/2}.
\end{align*}
Applying Lemma \ref{lem:easy} multiple times, 
\begin{align*}
& \sum_{(k_1, \ldots, k_{\ell -1}) \in S_{\ell  -1}} (k_1 \cdots k_{\ell-1}(n- k_1 - \ldots - k_{\ell -1}))^{-3/2} \\
& = \sum_{(k_1, \ldots, k_{\ell -2}) \in S_{\ell  -2}} (k_1 \cdots k_{\ell-2})^{-3/2} \sum_{t \leq k_{\ell - 1} \leq n -t - k_1 - \ldots  - k_{\ell - 2}} (k_{\ell - 1} (n- k_1 - \ldots - k_{\ell -1}))^{-3/2} \\
& \leq \frac{16}{\sqrt{t}} \sum_{(k_1, \ldots, k_{\ell -2}) \in S_{\ell  -2}} (k_1 \cdots k_{\ell-2}(n- k_1 - \ldots - k_{\ell -2}))^{-3/2} \\
& \leq \frac{16^{\ell -1}}{n^{3/2} t^{(\ell-1)/2}}.
\end{align*}
This shows the first inequality. Next, using the first inequality, \eqref{eq2}, and Lemma \ref{lem:easy},
\begin{align*}
& \Prob{|\mathcal T_1| \geq t, \ldots, |\mathcal T_k| \geq t, |\mathcal T_1| + \ldots + |\mathcal T_\ell| = n} \\
& = \sum_{j = kt}^{n-(\ell - k)} \Prob{|\mathcal T_1| \geq t, \ldots, |\mathcal T_k| \geq t, |\mathcal T_1| + \ldots + |\mathcal T_k| = j} \Prob{|\mathcal T_{k+1}| + \ldots + |\mathcal T_\ell| = n-j} \\
& \leq \frac{\omega_1^{k + 1} 16^{k-1}(\ell - k)}{ t^{(k-1)/2}} \sum_{j = kt}^{n-(\ell - k)} (j(n-j))^{-3/2} \\
& \leq \frac{\omega_1^{k + 1} 16^{k}(\ell - k)}{ t^{(k-1)/2} n^{3/2}}  \frac{1}{\sqrt{\min(kt, \ell - k)}}.
\end{align*}
This concludes the proof.
\end{proof}

\begin{proof} [Proof of Lemma \ref{lem:gam2}]

Let $n - \ell \in \mathbb N$ and  $n$ be sufficiently large. Upon choosing $C_1 \geq n_5$ large enough such that $I \cap \{C_1, C_1 + 1, \ldots \} = (\N h + 1) \cap \{C_1, C_1 + 1, \ldots \}$, for $C_1 < t < n / \ell - C_1$, we have the identity
\begin{align*}
& \Prob{|\mathcal T_1| \geq t, \ldots, |\mathcal T_\ell| \geq t, |\mathcal T_1| + \ldots + |\mathcal T_\ell| = n} \\
& = \sum_{(k_1, \ldots, k_{\ell -1}) \in S_{\ell - 1}, \newline k_i \in \mathbb Nh +1} \Prob{|\mathcal T_1| = k_1, \ldots, |\mathcal T_{\ell-1}| = k_{\ell-1}, |\mathcal T_\ell| = n - k_1 - \ldots - k_{\ell-1}} \\
& \geq \omega_2^\ell \sum_{(k_1, \ldots, k_{\ell -1}) \in S_{\ell  -1},  k_i \in \mathbb Nh +1} (k_1 \cdots k_{\ell-1}(n- k_1 - \ldots - k_{\ell -1}))^{-3/2}.
\end{align*}
Assume $h = 1$. Then, following the same lines as in the previous proof and using the lower bound in Lemma \ref{lem:easy}, we deduce that the right hand side is bounded from below by $(\omega_2 \chi)^\ell n^{-3/2} t^{(1-\ell)/2}$. The general case $h > 1$ follows similarly and we 
do not present the straightforward modifications. 
Next, with $C_1$ as before and $\tilde C_1 \leq t \leq n / k - \tilde C_1$ for sufficiently large $\tilde C_1$ (in particular, $\tilde C_1 > C_1$), and all $n$ sufficiently large,
\begin{align*}
& \Prob{|\mathcal T_1| \geq t, \ldots, |\mathcal T_k| \geq t, |\mathcal T_1| + \ldots + |\mathcal T_\ell| = n} \\
& \geq \sum_{j = (t+C_1)k, j-k \in \N h}^{n-(\ell - k)} \Prob{|\mathcal T_1| \geq t, \ldots, |\mathcal T_k| \geq t, \sum_{\ell=1}^k |\mathcal T_i|  = j} \Prob{|\mathcal T_{k+1}| + \ldots + |\mathcal T_\ell| = n-j} \\
& \geq \frac{c_1^{k}}{ t^{(k-1)/2}} \sum_{j = (t+C_1)k, j-k \in \N h}^{n-(\ell - k)} (j(n-j))^{-3/2}.
\end{align*}
Now, let $C_2 > \ell - k$ be minimal with $n - C_2 - k \in \mathbb N h$. Then, the right hand side is bounded from below by $C_2^{-3/2} c_1^{k}  t^{(1-k)/2} n^{-3/2}$. This concludes the proof.
\end{proof}

\begin{proof}[Proof of  Theorem \ref{thm:tailbounds}] We may assume $n \in I$ and $t \geq 1$. First, 
\begin{align*}
\Prob{N_k \geq t} \leq \sum_{\ell \geq k} p_\ell {\ell \choose k} \frac{\Prob{|\mathcal T_1| \geq t, \ldots, |\mathcal T_k| \geq t, |\mathcal T_1| + \ldots + |\mathcal T_\ell| = n-1}} {\Prob{|\mathcal T| = n }}.
\end{align*}
By Lemma \ref{lem:gam},
\begin{align*}
\Prob{N_k \geq t} \leq \frac{(16\omega_1)^{k+1}}{ \Prob{|\mathcal T| = n } t^{(k-1)/2} (n-1)^{3/2}} \left[1 + \sum_{\ell \geq k+1} \frac{p_\ell \ell^k (\ell - k)}{(\min(kt, \ell - k))^{1/2}} \right].
\end{align*}
Since $\E{\xi^{k+1}} < \infty$, the second factor in this display is bounded. Inequality \eqref{firsttail} now follows by approximating $\Prob{|\mathcal T| = n} $ with the help of \eqref{ident1} and \eqref{LLT}. 

To move from $N_k$ to $N_{k+}$, note that, for non-negative numbers $u_1, \ldots, u_n, t$, in order to have $u_1 + \ldots  + u_n \geq t$, we need to have $\max(u_1, \ldots, u_n) \geq t/n$. Thus, $\Prob{N_{k+} \geq t} \leq \Prob{N_k \geq t(\xi_\epsilon-k+1)^{-1}}.$ As above,
\begin{align*}
\Prob{N_{k+} \geq t} & \leq \Prob{\xi_\epsilon \geq t+k}  + (\Prob{|\mathcal T| = n })^{-1} \cdot \\
& \sum_{\ell = k}^{t+k-1} p_\ell {\ell \choose k} \Prob{|\mathcal T_1|Ê\geq \frac {t}{\ell - k + 1}, \ldots, |\mathcal T_k| \geq  \frac {t}{\ell - k + 1}, \sum_{j=1}^\ell |\mathcal T_i|  = n-1} . \end{align*}
The second summand is bounded from above by
\begin{align*}
\frac{(16\omega_1)^{k+1}}{ \Prob{|\mathcal T| = n } t^{(k-1)/2} (n-1)^{3/2}} \left[1 + \sum_{\ell = k+1}^{t + k -1}  \frac{p_\ell \ell^k (\ell - k+1)^{(k+1)/2}} {\min(kt/(\ell -k+1), \ell - k))^{1/2}} \right].
\end{align*}
Since $\E{\xi^{(3k+1)/2}} < \infty$, using the same ideas as above, the last term is at most of order $t^{(1-k)/2}$. Further, by Markov's inequality, using Proposition \ref{prop2},
$$ \Prob{\xi_\epsilon \geq t+k} \leq \frac{\E{\xi_\epsilon^{(k-1)/2}}}{(t+k)^{(k-1)/2}}  = O\left(\E{\xi^{(k+1)/2}} (t+k)^{(1-k)/2}\right).$$
The claim follows. 
\end{proof}
\begin{proof} [Proof of Theorem \ref{thm:tailbounds2}] Let $\ell \geq 2$ and $\lambda = \min \{i \geq \ell:p_i > 0\}$. Then,
\begin{align*} \Prob{N_\ell \geq t} \geq p_\lambda (\Prob{|\mathcal T| = n})^{-1} \Prob{|\mathcal T_1| \geq t, \ldots, |\mathcal T_\ell| \geq t, |\mathcal T_1| + \ldots + |\mathcal T_\lambda| = n-1}. \end{align*}
For sufficiently large $n$, under the conditions of the theorem, the right hand side is non-zero. The assertion follows immediately from Lemma \ref{lem:gam2}.
\end{proof}

It remains to verify the claim in the remark following Theorem \ref{thm:tailbounds2}, that is, for some $\varepsilon >0$, we have $\lim_{n \to \infty} \sup_{t \leq \varepsilon n} \Prob{N_k \geq t} t^{\frac{k-1}{2}} = \infty$ if $\E{\xi^k} = \infty$.
To this end, assume that $n$ is sufficiently large and $\varepsilon_1 n \leq t \leq \varepsilon_2 n$ for some (small) $0 < \varepsilon_1 < \varepsilon_2 < 1$ depending on the offspring distribution and $k$ but not on $n$. Furthermore, let $C > 0$ and $K$ chosen in such a way that
$\sum_{\ell = k + 1}^K p_\ell {\ell \choose k} \geq C$. We also suppose that $h=1$ for the sake of presentation. Then, using Lemma \ref{lem:gam2}, there exists $c >0$ such that
\begin{align*}
& \Prob{N_k \geq t} \Prob{|\mathcal T| = n} \\
&\geq \sum_{\ell = k+1}^K p_\ell {\ell \choose k} \Prob{|\mathcal T_1| \geq t, \ldots, |\mathcal T_k | \geq t, |\mathcal T_{k+1}| \leq \frac{t}{\ell - k}, \ldots,  |\mathcal T_{\ell}| \leq \frac{t}{\ell - k}, \sum_{j = 1}^\ell |\mathcal T_{j}|  =  n-1} \\
& =  \sum_{\ell = k+1}^K p_\ell {\ell \choose k} \sum_{\substack{0 \leq c_{k+1}, \ldots, c_\ell \\ \leq t / (\ell - k)}}  \mathbb P \bigg \{|\mathcal T_1| \geq t, \ldots, |\mathcal T_k | \geq t,  \\ & \hspace{5cm} \sum_{j= 1}^k |\mathcal T_{j}|  =  n-1 - \sum_{j = k+1}^\ell c_{j}\bigg \}  \prod_{m = k}^{\ell-1} \Prob{ |\mathcal T| = c_{m+1}} \\
& \geq c n^{-3/2} t^{(1-k)/2}  \sum_{\ell = k+1}^K p_\ell {\ell \choose k}  \sum_{0 \leq c_{k+1}, \ldots, c_\ell \leq t / (\ell - k)} \prod_{m = k+1}^\ell \Prob{ |\mathcal T| = c_m}  \\
& = c n^{-3/2} t^{(1-k)/2}   \sum_{\ell = k+1}^K p_\ell {\ell \choose k}   \left(\Prob{ |\mathcal T| \leq \frac{t}{\ell - k}} \right)^{\ell-k}.
\end{align*}
Using the well-known asymptotic expansion of $\Prob{|\mathcal T| = n}$, it follows that, for any sequence $t = t(n)$ with $\varepsilon_1 n \leq t \leq \varepsilon_2 n$, 
$$\liminf_{n \to \infty}  \Prob{N_k \geq t} t^{(k-1)/2} = c C \alpha^{-1}.$$
The assertion follows since $C$ was chosen arbitrarily.

\section*{Appendix B: proof of Proposition \ref{thm:general}}

We need the following result augmenting Corollary \ref{subtreebounds}.

\begin{lem} \label{lem:augment}
As $n \to \infty$, $n \in \N h + 1$, 
\begin{itemize}
\item [(i)] if $\E{\xi^3} < \infty$, then  $\E{\sqrt{N_2} \log^k (N_2 \vee 1)} = \Theta(\log^{k+1} n), k \in \N_0$,
\item [(ii)] if $\E{\xi^{7/2}} < \infty$, then $\E{\sqrt{N_{2+}} \log^k (N_{2+} \vee 1)} = \Theta(\log^{k+1} n), k \in \N_0$,
\item [(iii)] if $\E{\xi^{7/2}} < \infty$, then, for all $k \in \N_0$, there exist constants $\kappa_1^{(k)} \geq \kappa_2^{(k)} \geq 0$ such that, for all $n$ sufficiently large,  
$$-\kappa_1^{(k)} \log^k n \leq \E{\sqrt{N_1}\log^k N_1} - \sqrt{n} \log^k n \leq -\kappa_2^{(k)} \log^k n.$$
\end{itemize}
\end{lem}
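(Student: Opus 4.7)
For parts (i) and (ii) the plan is to use the layer-cake identity
\begin{align*}
\E{g(X)} = \int_0^\infty g'(t)\, \Prob{X > t}\, dt
\end{align*}
for non-negative $X$ and $g(t)=\sqrt{t}\log^k(t\vee 1)$, noting that $g'(t) = \frac{\log^k t}{2\sqrt{t}} + \frac{k\log^{k-1}t}{\sqrt t}$ for $t\geq 1$. For (i), the upper bound $\Prob{N_2 \geq t} \leq \beta_2 t^{-1/2}$ from Theorem \ref{thm:tailbounds} (valid under $\E{\xi^3}<\infty$) combined with the deterministic bound $N_2 \leq n$ yields
\begin{align*}
\E{\sqrt{N_2}\log^k(N_2\vee 1)} \leq C\int_1^n \frac{\log^k t}{t}\, dt = O(\log^{k+1}n).
\end{align*}
For the matching lower bound, I apply Theorem \ref{thm:tailbounds2} to get $\Prob{N_2 \geq t} \geq \beta_2^* t^{-1/2}$ on $[1, n/2 - s_2]$ and integrate against $g'(t)$ on that range. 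Part (ii) follows by the same argument using the versions of both tail bounds for $N_{2+}$; the lower bound uses $N_{2+}\geq N_2$ so no stronger moment condition is needed there.

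For (iii) I plan to exploit the identity $N_1 = n-1-N_{2+}$ together with a first-order expansion of $g(t)=\sqrt t\log^k t$ near $t=n$. Writing
\begin{align*}
\sqrt{n}\log^k n - \sqrt{N_1}\log^k N_1 = \int_{N_1}^{n} g'(t)\, dt
\end{align*}
and taking expectations (using Fubini and the substitution $s=n-1-t$) gives
\begin{align*}
\sqrt n\log^k n - \E{\sqrt{N_1}\log^k N_1} = \int_{-1}^{n-1} g'(n-1-s)\, \Prob{N_{2+} \geq s}\, ds.
\end{align*}
The integrand over the regime $1\leq s \leq n/2$ is $g'(n-1-s)\sim\frac{\log^k n}{2\sqrt n}$, so by Corollary \ref{subtreebounds}(ii) the contribution from this regime is asymptotic to $\frac{\log^k n}{2\sqrt n}\,\E{N_{2+}} = \Theta(\log^k n)$, giving upper and lower bounds of the required form. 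The contribution from $s\in[n/2,n-1]$ is dominated using the tail bound $\Prob{N_{2+}\geq s} \leq \beta_{2+}s^{-1/2}$ from Theorem \ref{thm:tailbounds} and the fact that $g'(u)$ is integrable near $u=0$ after multiplication by a constant factor (so this piece is at most $O(\log^k n)$ and in fact $o(\log^k n)$ by choosing the split carefully).

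The main obstacles I anticipate are twofold. First, to obtain sharp asymptotics in (iii) with matching constants $\kappa_1^{(k)}\geq \kappa_2^{(k)} > 0$, I must justify that the contribution to the integral from $s$ of order $\sqrt n$ genuinely matches $\E{N_{2+}}\cdot\frac{\log^k n}{2\sqrt n}$ up to lower order terms, which requires careful splitting and uniform control of $g'(n-1-s)/g'(n)$ for $s\ll n$. Second, the lower bound in (i) needs the restriction in Theorem \ref{thm:tailbounds2} that $\sum_{\ell\geq 2} p_\ell > 0$; since $\sigma^2 > 0$ implies $p_\ell > 0$ for some $\ell\geq 2$, this is automatic, but the argument should note this explicitly. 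Minor bookkeeping is needed to handle the range $N_1 \in \{0,1\}$, where $\log N_1$ is either undefined or vanishes, but these events contribute $o(1)$ by the tail estimates.
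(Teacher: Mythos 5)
Your approach to (i) and (ii) is essentially the paper's: both integrate the tail bounds from Theorems \ref{thm:tailbounds} and \ref{thm:tailbounds2} against the test function, yours via $g'$ and the layer-cake identity, the paper's via the inverse of $g$. These are equivalent, and your remark that the lower tail bound applies automatically since $\sigma^2>0$ forces $\sum_{\ell\geq 2}p_\ell>0$ is correct.

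For (iii) you take a genuinely different route. The paper handles $\kappa_2^{(0)}$ by Jensen's inequality applied to $\E{\sqrt{N_1}}=\E{\sqrt{n-1-N_{2+}}}$ and then lifts to $k\geq 1$ trivially via $N_1\leq n$; for $\kappa_1^{(k)}$ with $k\geq 1$ it performs a separate, more delicate computation of $\E{\log^k N_1}$ via a substitution in the tail integral. Your single Fubini identity
\begin{align*}
\sqrt{n}\log^k n - \E{\sqrt{N_1}\log^k N_1} = \int_{-1}^{n-1} g'(n-1-s)\,\Prob{N_{2+}\geq s}\,ds
\end{align*}
is cleaner and unifies both directions: a pointwise lower bound on $g'(n-1-s)$ over $s\in[1,n/2]$ together with the Theorem \ref{thm:tailbounds2} lower tail gives $\kappa_2^{(k)}>0$, and the crude $O(\log^k n)$ bound on the entire integral gives $\kappa_1^{(k)}$. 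This is a legitimate and arguably more transparent alternative. Two cautions, though. First, your intermediate claim that the contribution from $[1,n/2]$ is \emph{asymptotic} to $\frac{\log^k n}{2\sqrt n}\E{N_{2+}}$ overreaches: $g'(n-1-s)$ is not uniformly $\sim\frac{\log^k n}{2\sqrt n}$ over that range (at $s=n/2$ one picks up an extra $\sqrt{2}$), and $\int_1^{n/2}\Prob{N_{2+}\geq s}\,ds$ need not be $\sim\E{N_{2+}}$ since the piece over $[n/2,n]$ can be of the same order $\sqrt n$. What you actually need, and what does hold, is a two-sided $\Theta(\log^k n)$ bound, which is all the lemma asserts. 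Second, the claim that the contribution from $[n/2,n-1]$ is $o(\log^k n)$ ``by choosing the split carefully'' is not true in general (it is $O(\log^k n)$ and can genuinely be of that order when $\sum_{\ell\geq 3}p_\ell>0$); fortunately you never need this refinement. If you replace the claimed asymptotic equivalences by the corresponding $\Theta$-bounds, the argument is sound and gives exactly what the lemma requires.
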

\begin{proof}
(i) and (ii) follow as in the the proof of Corollary \ref{subtreebounds} using that the inverse function of $g(x) = \sqrt{x} \log^k(x \vee 1)$ is of the order $x^2 \log^{-2k}(x)$ as $x \to \infty$.
In order to prove (iii), note that, by (i), there exist constants $c_1, C_1 >0$ such that, for all $n \geq 1, n \in I$, 
$c_1 \sqrt{n-1} \leq \E{N_{2+}} \leq C_1 \sqrt{n-1}$ for all $n \geq 1$. By Jensen's inequality, for $n \geq 2, n \in I,$
\begin{align*}
\E{\sqrt{N_1}} \leq \sqrt{n-1 - \E{N_{2+}}} \leq \sqrt{n-1 - c_1\sqrt{n-1}} = \sqrt{n-1} - c_1 + O(n^{-1/2}).
\end{align*}
This shows the existence of $\kappa_2^{(0)}$. We may choose $\kappa_2^{(k)} = \kappa_2^{(0)}$ for all $k \geq 1$ since $N_1 \leq n$.  Next, 
\begin{align*}
\E{\sqrt{N_1}} = \sqrt{n-1} \E{\sqrt{1 - N_{2+} / (n-1)}} \geq \sqrt{n-1}\left(1 - \frac{\E{N_{2+}}}{n-1}\right) \geq \sqrt{n-1} - C_1.
\end{align*}
Thus, we can choose $\kappa_1^{(0)} = C_1$. We move on to the lower bound for $k\geq 1$. First, 
\begin{align*}
\E{\sqrt{N_1}\log^k N_1} & \geq \sqrt{n-1} \E{\log^k N_1} - \E{N_{2+} \log^k N_1}/\sqrt{n-1} \\ & \geq \sqrt{n-1} \E{\log^k N_1} - C_1 \log n.
\end{align*}
Hence, it is enough to show that $\E{\log^k N_1} = \log^k n + O(n^{-1/2} \log^k n)$.
To this end, for all $n$ sufficiently large, with $\beta_{2+}$ as in Theorem \ref{thm:tailbounds},
\begin{align*}
\E{\log^k N_1} &= \int_0^{\log^k (n-1)} \Prob{\log^k(n - 1- N_{2+}) \geq t} dt \\
& = \log^k(n-1) - k \int_{1}^{n-1} s^{-1} \log^{k-1}(s) \Prob{N_{2+} \geq n-1-s} ds \\
& \geq \log^k (n-1) - \beta_{2+} k  \int_{1}^{n-1} s^{-1} \log^{k-1}(s) (n-1-s)^{-1/2} ds \\
& = \log^k(n-1)- \beta_{2+} k (n-1)^{-1/2} \int_{1/(n-1)}^{1} t^{-1} (1-t)^{-1/2} \log^{k-1}(t(n-1)) dt \\
& \geq \log^k(n-1) -  \beta_{2+} k (n-1)^{-1/2} \Bigg(\sqrt{2} \int_{1/(n-1)}^{1} t^{-1} \log^{k-1}(t(n-1)) dt \\
& \hspace{5cm} + \log^{k-1} (n) \int_{0}^{1/2} t^{-1/2} (1-t)^{-1} dt\Bigg)  \\
& = \log^k(n-1) - \beta_{2+} (n-1)^{-1/2} ( \sqrt{2} \log^k (n-1) + 4 k \log^{k-1} (n) ).
\end{align*}
This concludes the proof.
\end{proof}

\begin{proof} [Proof of Proposition \ref{thm:general}]
Throughout the proof, keeping track of the size of $\tau_n$ in the notation, let $R_i(n) = | \mathcal V\left((1^*2)^i 1^*\right)|$ and $r_i(n) = \E{R_i(n)}$ for $i \geq 0$. Then, in distribution, 
$R_{i}(n) = R_{i}(N_1) + R_{i-1}(N_2) + 1$, where $(R_{i}(n))_{n \geq 0}, (R_{i-1}(n))_{n \geq 0}, (N_1, N_2)$ are independent. We prove the lower bounds by induction on $i \geq 1$ starting with $i = 1$. From Lemma \ref{lem:augment} (i), we know that $\E{r_0(N_2)} \geq c_1 \log n$ for some $c_1 > 0$ and all $n \geq 1$.
Assume that $r_1(k) \geq c \sqrt{k} \log k$ for all $k \leq n-1$ where $c \kappa_1^{(1)} < c_1$. Then, using Lemma \ref{lem:augment} (iii), 
\begin{align*}
r_1(n) & = \E{r_1(N_1)} + \E{r_0(N_2)} + 1 \geq c \E{\sqrt{N_1} \log N_1} + c_1 \log n \\
& \geq c \sqrt{n} \log n - (c \kappa_1^{(1)} -c_1) \log n  \\
& \geq c \sqrt{n} \log n.
\end{align*}
Therefore, $r_1(n) = \Omega( \sqrt{n} \log n)$.  The general proof runs along the same lines where we only need to replace $\E{r_{0}(N_2)} \geq c_1 \log n$ by $\E{r_{i-1}(N_2)} \geq c_i \log^i n$ for a suitable $c_i> 0$.
The upper bound can be proved by the same inductive argument.

We move on to (ii) and abbreviate $G_i(n) =| \mathcal V\left((1^*2)^i 1^*({3+}) \N^*\right)|$ and $g_i(n) = \E{G_i(n)}$. In distribution, $G_0(n) = G_0(N_1) + N_{3+} + 1$, where $(N_1, N_{3+}), (G_0(n))_{n \geq 0}$ are independent. Recall that $\E{N_{3+}} \leq C_1 \log n$ for some $ C_1 > 0$ and all $n \geq 1$. 
Assume that $g_0(k)  \leq C \sqrt{k} \log k$ for some $C > C_1  / \kappa_2^{(1)}$ and all $k \leq n-1$. Then, for $n$ sufficiently large, using Lemma \ref{lem:augment} (iii), 
$$g_0(n) \leq  \E{g_0(N_1)} + C_1 \log n  +1 \leq C \sqrt {n}  \log n + C_1 \log n - C \kappa_2^{(1)} \log n  + 1\leq C \sqrt{n} \log n.$$ The corresponding lower bound follows along the same lines.
For $i \geq 1$, we have the distributional recurrence $G_i(n) = G_i(N_1) + G_{i-1}(N_2) + 1$, where $(G_{i}(n))_{n \geq 0}, (G_{i-1}(n))_{n \geq 0}, (N_1, N_2)$ are independent, which is of the same form as the recurrence for $R_i(n)$. Thus, the same arguments as applied to $R_i(n)$ conclude the proof of (ii). The proof of (iii) runs along the same lines and is thus omitted.

Finally, let us consider $\mathcal V(\{2+\}^*)$. Let $L \geq 1$ (to be chosen later) and $\tau'$ be the subtree consisting of all nodes in  $\mathcal V(\{2+\}^*)$ with subtree size at most $L$. Then
$| V( \{2+\}^*) | \leq |\tau'| (1 + L)$.
Define $\varrho_n$ as the largest value of $k$ such that $N_k \geq L$. With $\mathcal T_1, \mathcal T_2, \ldots, \zeta$ as in Proposition \ref{prop2}, let $1 \leq \varrho \leq \zeta - 1$ be maximal with $|\mathcal T_{(\varrho : \zeta - 1)}| \geq L$. (Set $\varrho =0$ if $|\mathcal T_i | < L$ for all $1 \leq i \leq \zeta-1$.) Then, by Proposition \ref{prop2}, $\varrho_n \to \varrho + 1$ in distribution.
Since $\E{\xi_\epsilon^2} \to \E{\zeta^2}$ by Proposition \ref{prop2} and $\varrho_n \leq \xi_\epsilon$, we deduce $\E{\varrho_n} \to \E{\varrho} + 1$. 
Obviously, $\varrho \to 0$ in probability as $L \to \infty$. Again, since $\E{\xi^3} < \infty$, this convergence also holds in mean. Thus, upon choosing $L$ sufficiently large, we may assume that $q := \sup_{n \geq 1} \E{\varrho_n} < 2$. Now, in probability, the size of the tree $\tau'$ is bounded from above by the size of a branching process with offspring mean at most $q -1 $. Hence, $\E{|\tau'|}$ is uniformly bounded. 
\end{proof}

\section*{Appendix C: Proof of Proposition \ref{heavyp}}

The proofs uses the following lemma.

\begin{lem}
Let $Y_1, Y_2, \ldots$ be a sequence of non-negative, independent and identically distributed random variables with finite second moment. 
Let $Z_\rho = \sum_{i=0}^\infty \rho^i Y_i$ for $0 < \rho < 1$.
\begin{itemize}
\item [(i)] Assume that, for all $x \leq x_0$ and some $\varepsilon, \alpha > 0$, we have $\Prob{Y_1 \leq x} \geq \varepsilon x^\alpha$.
Then, 
$$\limsup_{x \to 0} \frac{- \log \Prob{Z_\rho \leq x}}{\log^2 x} \leq  \frac{\alpha}{\log 1 / \rho}.$$
\item [(ii)] Assume that, for all $x \leq x_0$ and some $\varepsilon, \alpha > 0$, we have $\Prob{Y_1 \leq x} \leq \varepsilon x^\alpha$.
Then, 
$$\liminf_{x \to 0} \frac{- \log \Prob{Z_\rho \leq x}}{\log^2 x} \geq  \frac{\alpha}{2 \log 1/ \rho}.$$
\end{itemize}

\end{lem}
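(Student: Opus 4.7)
The plan is to exploit the distributional self-similarity $Z_\rho = \sum_{i=0}^{K-1} \rho^i Y_i + \rho^K Z_\rho^{(K)}$, valid for every $K \geq 1$, where $Z_\rho^{(K)} := \sum_{j=0}^\infty \rho^j Y_{K+j}$ is independent of $(Y_0, \ldots, Y_{K-1})$ and distributed as $Z_\rho$. In both parts we will set $K = K(x)$ of order $\log(1/x)/\log(1/\rho)$ so that $\rho^K$ is of the same order as $x$; the rest is a direct independence computation.

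For part (i), since $\E{Z_\rho} = \E{Y_1}/(1-\rho) < \infty$, there exists $M > 0$ with $q := \Prob{Z_\rho \leq M} > 0$. Choose $K$ to be the smallest integer with $\rho^K M \leq x/2$. If additionally $Y_i \leq (1-\rho)x/2$ for all $0 \leq i < K$, then $\sum_{i<K} \rho^i Y_i \leq x/2$, whence $Z_\rho \leq x$. By independence and the hypothesis, for $x$ small enough,
\[ \Prob{Z_\rho \leq x} \geq q \cdot \bigl(\varepsilon ((1-\rho)x/2)^\alpha\bigr)^K. \]
Taking logarithms and inserting $K \sim \log(1/x)/\log(1/\rho)$ yields
\[ -\log \Prob{Z_\rho \leq x} \leq \alpha K \log(1/x) + O(K) \sim \frac{\alpha \log^2(1/x)}{\log(1/\rho)}, \]
which after division by $\log^2 x$ is the required bound.

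For part (ii), the crucial observation is that, since all $Y_i \geq 0$, we have $\{Z_\rho \leq x\} \subseteq \bigcap_{i=0}^{K-1} \{Y_i \leq x/\rho^i\}$. Choosing $K = \lfloor \log(x_0/x)/\log(1/\rho) \rfloor$ ensures $x/\rho^i \leq x_0$ for all $i < K$, so by independence and the assumed upper bound,
\[ \Prob{Z_\rho \leq x} \leq \prod_{i=0}^{K-1} \varepsilon (x/\rho^i)^\alpha = \varepsilon^K x^{\alpha K} \rho^{-\alpha K(K-1)/2}. \]
Taking logarithms, the dominant contributions are $\alpha K \log x \sim -\alpha \log^2(1/x)/\log(1/\rho)$ and $\alpha (K(K-1)/2)\log(1/\rho) \sim +\alpha \log^2(1/x)/(2\log(1/\rho))$, summing to $-\alpha \log^2(1/x)/(2\log(1/\rho))$. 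This gives exactly the desired lower bound on $-\log\Prob{Z_\rho \leq x}$.

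There is no real obstacle beyond elementary asymptotics: the factor of $2$ separating the two bounds arises because in part (i) every $Y_i$ must be squeezed to order $x$, costing $\sim \varepsilon x^\alpha$ per term, whereas in part (ii) the constraint $Y_i \leq x/\rho^i$ relaxes geometrically in $i$, which halves the total contribution via the arithmetic sum $\sum_{i < K} i \sim K^2/2$. The one minor point worth stating carefully is existence of $M$ with $\Prob{Z_\rho \leq M} > 0$, which follows immediately from $\E{Z_\rho} < \infty$ and does not even require the second-moment hypothesis in the statement.
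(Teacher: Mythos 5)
Your proof is correct and follows the same truncation-and-independence strategy as the paper: split $Z_\rho$ at an index $K$ of order $\log(1/x)/\log(1/\rho)$, treat the tail $\rho^K Z_\rho^{(K)}$ separately, and for part (ii) exploit the monotone event inclusion $\{Z_\rho \leq x\} \subseteq \bigcap_{i<K}\{Y_i \leq x/\rho^i\}$ with the geometrically relaxing thresholds producing the arithmetic sum $\sum_{i<K} i \sim K^2/2$ and hence the halved constant. Your part (ii) is essentially verbatim the paper's argument (the paper phrases the index choice through a parameter $C < 1/\log(1/\rho)$ and passes to the limit by continuity; you plug in the borderline value directly via the $x_0$ cutoff — same thing). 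In part (i) you deviate slightly: where the paper controls the tail $Z_D(t)$ by computing its mean and variance and applying Chebyshev to show $\Prob{Z_D(t)\leq t}\to 1$, you instead fix once and for all a constant $M$ with $q := \Prob{Z_\rho \leq M} > 0$ and choose $K$ so that $\rho^K M \leq x/2$, absorbing a harmless factor $q$ into the bound. This is a genuine simplification: it eliminates the variance computation, it makes transparent that the additive $O(1)$ correction is negligible after dividing by $\log^2 x$, and — as you rightly observe — it shows the second-moment hypothesis is not actually needed for this lemma, since the only thing used is $\Prob{Z_\rho < \infty} > 0$. The paper's Chebyshev route is what actually relies on the finite-variance assumption appearing in the statement. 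Both are fine; yours is a little leaner.
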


\begin{proof}
We start with (i). Fix $0  < \rho < 1$, let $D > 0$ and $Z_D(t) = \sum_{i = - \lfloor D \log t  \rfloor }^\infty \rho^i Y_i$. Then 
$\E{Z_D(t)} = t^{- \lfloor D\log \rho  \rfloor} \E{Y_1} / (1-\rho)$ and $\text{Var}(Z_D(t)) = t^{-2  \lfloor D\log \rho  \rfloor} \text{Var}(Y_1) / (1-\rho^2)$. Thus, if $D > (- \log \rho)^{-1}$, then, by Chebyshev's inequality,
$$\Prob{Z_D(t) \leq t} \to 1, \quad t \to 0.$$
Next, 
\begin{align*}
\Prob{\sum_{i=0}^{-  \lfloor D \log t  \rfloor} \rho^i Y_i \leq t}  & \geq \Prob{Y_1 \leq \frac{t}{\sum_{i=0}^{-  \lfloor D \log t  \rfloor } \rho^i}}^{- \lfloor D \log t  \rfloor +1} \\
& \geq  \Prob{Y_1 \leq (t(1-\rho))}^{- \lfloor D \log t  \rfloor +1}  \\
& \geq (t(1-\rho))^{- \alpha ( \lfloor D \log t \rfloor  - 1)} \varepsilon^{- \lfloor D \log t \rfloor + 1}.
\end{align*}
Combining the two bounds proves (i). We move on to (ii). Let $0 < C < - (\log \rho)^{-1}$. Then, assuming $0 < x_0, \varepsilon < 1$, for all $t \leq x_0$, 
\begin{align*}
\Prob{Z_\rho \leq t}  \leq \Prob{\sum_{i=0}^{-\lfloor C \log t \rfloor } \rho^i Y_i \leq t} & \leq \prod_{i=0}^{-\lfloor C \log t \rfloor} \Prob{Y_1 \leq \rho^{-i} t } \\
& \leq \varepsilon^{-C \log t} t^{-C\alpha \log t} \prod_{i=0}^{-\lfloor C \log t \rfloor} \rho^{-i\alpha} \\
& \leq \varepsilon^{-C \log t} \exp \left(-C \alpha \log^2 t - \alpha \log \rho \frac{C^2 \log^2 t + C \log t} 2 \right). 
\end{align*}
By continuity, the inequality remains valid for $C = -(\log \rho)^{-1}$, and we choose this value to optimize the bound.
\end{proof}

\begin{proof} [Proof of Proposition \ref{heavyp} (Lower bound)]
Fix $0 < \delta< 1/2$ non-algebraic and $c := 1 / (1-\delta)$. (In particular, $(1-\delta)^i n \notin \N$ for all $i, n \geq 1$.) For $i \geq 0$, 
let $e_i \in \{1\}^*$ be the vector of length $i$ and $\sigma_i := \sigma_i(n) := \inf \{ j \geq 0:  N^*(e_j) \leq (1-\delta)^i n \}$. Then $L_n = \sigma_{\lceil \log_c n\rceil} - 1$.
The crucial observation is that there exist $C_1, C_2 > 0$ such that, for all $n \geq C_1$ and $j \leq  \lfloor \log_c n - C_2 \rfloor$, we have, stochastically, 
\begin{align}\label{sto1} \sigma_j \leq \sum_{i=1}^j G_i, \end{align}
where $G_1, G_2, \ldots$ is a sequence of independent geometrically distributed random variables on $\{1, 2, \ldots \}$ and $G_i$ has success parameter $\beta_2^* / \sqrt{\delta (1-\delta)^{i-1}n}$. Taking \eqref{sto1} for granted, we obtain, in a stochastic sense, 
$$  L_n = \sigma_{\lfloor \log_c n - C_2 \rfloor} + (L_n - \sigma_{\lfloor \log_c n - C_2 \rfloor}) \leq (1- \delta)^{-C_2-1} + \sum_{i=1}^{\lfloor \log_c n - C_2 \rfloor} G_i.$$
A simple direct computation using nothing but $1+x \leq e^x, x \in \R,$ shows that a geometrically distributed random variable with success probability $0 < p < 1$ is stochastically smaller than 
$1 + E / p$ where $E$ has the standard exponential distribution. It follows that, in probability, 
$$L_n \leq  \sum_{i=1}^{\lceil \log_c n \rceil } (1 + \sqrt{\delta n } (\beta_2^*)^{-1} (1-\delta)^{i/2} E_i) +(1- \delta)^{-C_2-1}, $$
where $E_1, E_2, \ldots$ is a sequence of independent random variables each of which having the standard exponential distribution. Hence, in probability, 
$$\frac{\beta_2^*(L_n  - (1- \delta)^{-C_2-1} -  \lceil  \log_c n \rceil )}{\sqrt{\delta n }} \leq  \sum_{i=0}^{\infty} (1-\delta)^{i/2} E_i.$$
It follows that $T_\infty \leq \frac  {\sigma \sqrt \delta}{2 \beta_2^*} \sum_{i=0}^{\infty} (1-\delta)^{i/2} E_i$ stochastically. From here, the lower bound on the limit inferior follows from the previous lemma. 

It remains to prove the bound \eqref{sto1}. Let $t \in \N$, $j \geq 0$ and $n \in I$. Then, 
\begin{align*}
& \Prob{\sigma_{j+1} \geq t} - \Prob{\sigma_j \geq t}  \\
& = \sum_{k=0}^{t-1} \sum_{\ell = \lceil (1-\delta)^{j+1} n \rceil}^{\lfloor (1-\delta)^{j} n \rfloor} \Prob{\sigma_{j+1}  \geq t | N^*(e_k) = \ell,\sigma_j = k } \Prob{N^*(e_k) = \ell,\sigma_j = k} \\
& =  \sum_{k=0}^{t-1} \sum_{\ell = \lceil (1-\delta)^{j+1} n \rceil}^{\lfloor (1-\delta)^{j} n \rfloor} \Prob{ \tilde N(\ell, t-k-1) >  (1-\delta)^{j+1} n }  \Prob{N^*(e_k) = \ell,\sigma_j = k}, 
\end{align*}
where $(\tilde N(\ell, i))_{i \geq 0}$ is distributed like $(N^*(e_i))_{i \geq 0}$ but in the tree $\tau_\ell$. For any $(1-\delta)^{j+1}n < m \leq \ell \leq \lfloor (1-\delta)^{j} n \rfloor$ and $i \geq 1$, we have
\begin{align*}
\Prob { \tilde N(\ell, i) \leq (1-\delta)^{j+1} n |  \tilde N(\ell, i-1) = m} = \Prob { \tilde N_{2+}(m) \geq m -  (1-\delta)^{j+1} n}.
\end{align*}
Now we specify $C_1$ and $C_2$ in order to apply Theorem \ref{thm:tailbounds2}. First, let $C_2$ be large enough such that $(1- \delta)^{1-C_2} \geq \max \{n_2, (2s_2+3)/(1-2\delta)\}$ with $n_2, s_2$ as in Theorem \ref{thm:tailbounds2}. Then, let $C_1 = c^{C_2 + 2}$. 
By Theorem \ref{thm:tailbounds2}, for all $n \geq C_1$ and $j \leq \lfloor \log_c n - C_2 \rfloor -1$, the right hand side of the last display is bounded from below by $\beta_2^* (m -  (1-\delta)^{j+1} n)^{-1/2} \geq \beta_2^* (\delta (1-\delta)^j n)^{-1/2}$.
Since $(\tilde N(\ell, i))_{i \geq 1}$ is a Markov chain, we have 
\begin{align*}
& \Prob{\sigma_{j+1} \geq t}  - \Prob{\sigma_j \geq t} \\
& \leq  \sum_{k=0}^{t-1} \sum_{\ell = \lceil (1-\delta)^{j+1} n \rceil}^{\lfloor (1-\delta)^{j} n \rfloor} \left(1- \frac{\beta_2^*}{\sqrt{ \delta(1-\delta)^j n}}\right)^{t-k-1} \Prob{N^*(e_k) = \ell,\sigma_j = k}  \\
& \leq   \sum_{k=0}^{t-1}  \left(1- \frac{\beta_2^*}{\sqrt{ \delta(1-\delta)^j n}}\right)^{t-k-1} \Prob{\sigma_j = k}.
\end{align*}
Hence, $\Prob{\sigma_{j+1} \geq t} \leq \Prob{\sigma_{j} + G_{j+1} \geq t}$ where $\sigma_j$ and $G_{j+1}$ are independent. Iterating the argument concludes the proof.
\end{proof}
\begin{proof} [Proof of Proposition \ref{heavyp} (Upper bound)]
First of all, since the scaling limit $T_\infty$ does not depend on the offspring distribution, we may assume that $p_0 = p_2 = 1/2$. In particular, $\sigma = 1$. 
Next, let $\{U_{i,j} : i, j \geq 1\}$ be a family of independent random variables with the uniform distribution on $[0,1]$. Let $2 < a' < a$ be non-algebraic. For $i \geq 1$, define
\begin{align*}
Q_i & = | \{j \geq 0: N^*(e_j) \in (n a^{-i}, n a^{-i+1}] \}, \\
R_i & = \min \left \{t \in \N : \sum_{j=1}^t \beta_{2}^2 U_{i,j}^{-2} \geq n (m_i - a^{-i})  \right\}, \quad m_i = \frac{a^{-i+1}}{a'}.
\end{align*}
Fix $k \in \N$ (large). We will show that for all $n$ sufficiently large, stochastically,  
\begin{align} \label{inq2}\sum_{i=1}^k Q_i \geq \sum_{i=1}^k R_i.\end{align}
For now, let us use this bound to conclude the proof of the proposition. 
Note that the random variable $U_{1,1}^{-2}$ is in the domain of attraction of a non-negative stable distribution with index $1/2$. More precisely, for some $c > 0$, 
$$n^{-2}\sum_{j=1}^n U_{1,j}^{-2} \stackrel{d}{\longrightarrow} \mathcal S, \quad \log \E{e^{i \lambda \mathcal S}} = - c |\lambda|^{1/2} (1-i \ \text{sign}(t)).$$
The limit law is the Levy distribution with density $\sqrt{c / (2\pi)} x^{-3/2} e^{-c / (2x)}$ on $[0,\infty)$. A straightforward computation shows that $\mathcal S^{-1/2}$ is distributed like $c^{-1/2} |\mathcal N|$, where $\mathcal N$ has the standard normal distribution.
In particular, for any $x > 0$, as $n \to \infty$, 
\begin{align*}
\Prob{R_i / \sqrt{n} \geq x} 
 \to \Prob{(m_i - a^{-i})^{1/2} (c \beta_2^2)^{-1/2}  |\mathcal N| \geq x}.
\end{align*}
It follows that, for $x > 0$, 
\begin{align*}
\Prob{  T_\infty \leq x / 2} = \lim_{n \to \infty} \Prob{L_n \leq x \sqrt{n}} & \leq \limsup_{n \to \infty} \Prob{\sum_{i=1}^k Q_i \leq x \sqrt{n}} \\
& \leq \lim_{n \to \infty} \Prob{\sum_{i=1}^k R_i \leq x \sqrt{n}}  \\ 
& = \Prob{\sum_{i=1}^k (c\beta_2^2)^{-1/2} (m_i - a^{-i})^{1/2} |\mathcal N_i| \leq x} \\
& = \Prob{ (c\beta_2^2)^{-1/2} (a/a' - 1)^{1/2} \sum_{i=1}^k a^{-i/2} |\mathcal N_i| \leq x},
\end{align*}
where $\mathcal N_1, \mathcal N_2, \ldots$ are independent standard normal random variables.
Since the left hand side does not depend on $k$, we may substitute $k = \infty$ on the right hand side. The previous lemma concludes the proof since we can choose $a > 2$ arbitrarily.

It remains to prove \eqref{inq2}. To this end, for $i \geq 1$, define  $P_i  = \max \{N(j) : N(j) \in [n m_i, n a^{-i+1}] \}.$ Subsequently, assume that $n \geq 4 a^k a' / (a'-2)$.
Then, since for all non-leafs $v \in [n]$, we have $N^*(\kappa(v)1) \geq (N(v)-1) / 2$, a simple computation shows that the quantities $P_1, \ldots, P_k$ are well-defined.
Let $t > 0$. 
Then, 
\begin{align*}
\Prob{\sum_{i=1}^k Q_i \geq t} & = \sum^{\lfloor n a^{-k+1}\rfloor}_{x = \lceil n m_k \rceil} \Prob{\sum_{i=1}^k Q_i \geq t, P_k = x} \\
& = \sum^{\lfloor n a^{-k+1}\rfloor}_{x = \lceil n m_k \rceil} \sum_{\ell \geq 0} \Prob{Q_k \geq t - \ell, \sum_{i=1}^{k-1} Q_i = \ell \bigg | P_k = x} \Prob{P_k = x}. 
\end{align*}
Observe that, conditionally on $P_k = x$, the random variables $(Q_1, \ldots, Q_{k-1}), Q_k$ are independent. Hence, 
\begin{align*}
\Prob{\sum_{i=1}^k L_i \geq t} & = \sum_{\ell \geq 0}  \sum^{\lfloor n a^{-k+1}\rfloor}_{x = \lceil n m_k \rceil} \Prob{Q_k \geq t - \ell \bigg | P_k = x} \Prob{ \sum_{i=1}^{k-1} Q_i = \ell \bigg | P_k = x} \Prob{P_k = x}. 
\end{align*}
The crucial observation is that,  conditionally on $P_k = x$, the random variable $Q_k$ is stochastically larger than $R_k$. To see this, note that, by Theorem \ref{thm:tailbounds}, we know that $N_{2} \geq \beta_2^2 U_{1,1}^{-2}$ in probability. Hence, for any $ \lceil n m_k \rceil \leq x \leq \lfloor n a^{-k+1}\rfloor$ and $y \geq 1$, using the notation from the previous proof, we deduce 
\begin{align*} \Prob{Q_k \geq y | P_k = x} & =  \Prob{\tilde N (x, y) > n a^{-k}} \\
& \geq  \Prob{\sum_{j = 1}^{y - 1} \beta_2^2 U_{1,j}^{-2}< n(m_k - a^{-k})} = \Prob{R_k \geq y}.
\end{align*}
We conclude
\begin{align*}
\Prob{\sum_{i=1}^k Q_i \geq t} & \geq \sum_{\ell \geq 0} \Prob{R_k \geq t - \ell}  \sum^{\lfloor n a^{-k+1}\rfloor}_{x = \lceil n m_k \rceil} \Prob{ \sum_{i=1}^{k-1} Q_i = \ell \bigg | P_k = x} \Prob{P_k = x} \\
& = \sum_{\ell \geq 0} \Prob{R_k \geq t - \ell}  \Prob{ \sum_{i=1}^{k-1} Q_i = \ell} \\ 
& = \Prob{ \sum_{i=1}^{k-1} Q_i + R_k \geq t}. \end{align*}
Iterating gives the desired claim and finishes the proof.
\end{proof}

\section*{Appendix D: Proof of Theorem \ref{conv:hp}} \label{apb}

To keep this section self-contained, let us recall some definitions. For any discrete ordered rooted tree $\mathbb T$,  the heavy path is defined as the unique path from the root to a leaf which always continues in the largest subtree. Here, ties are 
broken considering the preorder index. It is easy to read off the length of the heavy path from the depth-first search process encoding $\mathbb T$ since each excursion above a level corresponds to 
a subtree. Thus, starting with the interval $I_0 := [0, 2|\mathbb T| - 2]$ at time $0$, given the interval $I_i$ at time $i \geq 0$, 
$I_{i+1}$ is chosen as the largest subinterval of $I_i$ corresponding to an excursion above level $i+1$. We now extend the concept to arbitrary continuous excursions.
To this end, let
$$\Cex := \{ f : [0,1] \to \R^+_0 \text{ continuous}: f(0) = f(1) = 1\}.$$
 We always consider $\Cex$ endowed with the topology induced by the supremum norm $\| f \| = \sup_{t \in [0,1]} |f(t)|$. 

\medskip \noindent \textbf{Superlevel sets for excursions.} 
Let $\mathcal V$ be the space of open subsets of $[0,1]$, where open refers to the subspace topology of $[0,1]$ in $\R$. For $O_1, O_2 \in \mathcal V$, we define $d(O_1, O_2) = d_{\mathrm H}(O_1^c, O_2^c)$, where $d_{\mathrm H}$ denotes the Hausdorff distance. 
For $O \in \mathcal V$ and a $\mathcal V$-valued sequence $O_n, n \geq 0$, we have $d(O_n, O) \to 0$ if any only if $\lambda(O_n \Delta O) \to 0$ where $A \Delta B := A \backslash B \cup B \backslash A$ and $\lambda$ denotes the Lebesgue measure on $[0,1]$.  $(\mathcal V, d)$ is a compact metric space (hence Polish). Every element of $\mathcal V$ uniquely decomposes in at most countably many disjoint open intervals.  

For a function $f \in \Cex$ and $t \geq 0$, the superlevel set $\mathcal P_f(t) = \{s \in [0,1]: f(s) > t\}$ is open. The $\mathcal V$-valued process $\mathcal P_f := \mathcal P_f(t), t \geq 0$ 
has the following properties
\begin{enumerate}
\item [(i)] $\mathcal P_f(t) \subseteq \mathcal P_f(s)$ for $0 \leq s \leq t$, 
\item [(ii)]  $\mathcal P_f$ is right-continuous, that is, $\mathcal P_f(t) = \mathcal P_f^+(t)  := \lim_{s \downarrow t} \mathcal P_f(s)$ for all $t \geq 0$, 
\item [(iii)] $\mathcal P_f(t) = \emptyset$ for all $t$ large enough, and 
\item [(iv)] $x \in \partial \mathcal P_f(t) \Rightarrow x \notin \partial \mathcal P_f(s)$ for all $0 \leq t < s$. 
\end{enumerate}
Here, and subsequently, $\partial O$ denotes the boundary of an open set $O \subseteq [0,1]$. Conversely, for every $\mathcal V$-valued process $\mathcal P_t, t \geq 0$ satisfying (i)--(iii), we can define
$$f_{\mathcal P}(t) = \sup \{s \geq 0: t \in \mathcal P_s\},$$
and observe that $\mathcal P_t = \mathcal P_{f_{\mathcal P}}(t)$ for all $t \geq 0$.
Note that $f_{\mathcal P}$ is lower semi-continuous. (A non-negative function on $[0,1]$ is lower semi-continuous if and only if $\mathcal P_f(t)$ is open for all $t \geq 0$. 
Lower semi-continuous functions are the most natural class of functions in the context of tree encodings. See, e.g.\ Duquesne \cite{duq} for a complete characterization.) Further, 
$f_{\mathcal P} \in \Cex$ if and only $\mathcal P_t, t \geq 0$ satisfies (iv). In particular, letting $\mathcal W$ be set of $\mathcal V$-valued processes satisfying (i)--(iv), the map 
$f \mapsto \mathcal P_f$ is a bijection between $\Cex$ and $\mathcal W$.

\medskip \noindent \textbf{The heavy path construction.}  For $O \in \mathcal V$, let $\mathfrak m(O)$ denote the interval with largest length in $O$. In case several intervals qualify, we choose the smallest of them with respect to the order $\preceq$ defined for intervals $I, I'$ by 
$$I \preceq I' :\Leftrightarrow \inf I \leq \inf I'.$$
For a process $\mathcal P$,  we define a process $\mathcal P^*_t, t \geq 0$ with $\mathcal P^*_t \subseteq \mathcal P_t$ for all $t \geq 0$ as follows: set $\mathcal P^*_0 = \mathcal P_0$ and $T_0 = 0$. 
Then, inductively, for $n \geq 0$, given $T_n$ and $\mathcal P^*_t$ for all $t \leq T_n$, let 
\begin{align*}
T_{n+1} &= \inf \{ t > T_n: \mathfrak m(\mathcal P^*_{T_n} \cap \mathcal P_t) \leq  2^{-(n+1)} \}, \\ 
 \mathcal P^*_t& = \mathfrak m ( \mathcal P^*_{T_n} \cap \mathcal P_t), \quad T_n < t <T_{n+1}, \\
 \mathcal P^*_{T_{n+1}} & = \mathfrak m \left( \lim_{s \uparrow T_{n+1}}\mathcal P^*_s \cap \mathcal P_t \right). \end{align*}
 $T_\infty := \lim_{n \to \infty} T_n$ is finite and bounded by $ \inf \{t \geq 0: \mathcal P_t = \emptyset \}$.  For $t \geq T_\infty$, we set $\mathcal P^*_t = \emptyset$. Then, 
 $\mathcal P^* \in \mathcal W$ and $\mathcal P^*_t$ is an interval for all $t \geq 0$. We also define $t_* = \lim_{n \to \infty} \inf \mathcal P^*_{T_n}$ and $t^* = \lim_{n \to \infty} \sup \mathcal P^*_{T_n}$.
We call $\mathcal P$ trivial if $\mathcal P_t = \emptyset$ for all $t \geq 0$. For a non-trivial process $\mathcal P_t, t \geq 0$, two scenarios are possible:
\begin{itemize}
\item [(i)] $T_n < T_\infty$ for all $n \geq 1$. Then, $\mathcal P^*_t$ is continuous at $T_\infty$ and $t_* = t^*$.
\item [(ii)]  $T_n = T_\infty$ for some $n \geq 1$. Then, $\mathcal P^*_t$ is discontinuous at $T_\infty$ and  $t_* < t^*$.  \end{itemize}
For $f \in \Cex$, write $\mathcal P^*_f$ for $\mathcal P^*$ and $T^f_\infty$ for $T_\infty$ when $\mathcal P = \mathcal P_f$. 
If $f$ is the depth-first search process of a discrete ordered rooted tree rescaled on the unit interval then $T^f_\infty$ is the length of the corresponding heavy path.

\medskip \textbf{Remark.} The sequence $T_n, n \geq 0$ arising in the heavy path construction plays no role in the sequel. We could replace the sequence $2^{-(n+1)}, n \geq 0$ in its definition by any monotonically 
decreasing sequence $\alpha_n, n \geq 0$ with $\alpha_n \to 0$ and $\alpha_n \geq 2^{-(n+1)}$. This leaves $\mathcal P^*$ and $T_\infty$ invariant. In fact, we could also let $\alpha_n$ 
depend on $\mathcal P$ by setting $\alpha_n = \frac 1 2 \lambda(\mathcal P^*_{T_n})$.

\medskip \textbf{Remark.} The Brownian Continuum Random tree is obtained from a Brownian excursion $\mathbf e$ as the quotient space $[0,1] / \sim$ where $x \sim y$ if $\mathbf e(x) = \mathbf e(y) \geq \mathbf e(s)$ for all $s \in [x,y]$. 
In the standard construction, the limiting object becomes a compact measured rooted metric space, a so-called real tree, see \cite{evans, legallrandomtrees}. One could develop the heavy path theory more abstractly for real trees without relying on encodings by continuous functions, but there is no need for this generalization in our
work.

\medskip Unfortunately, some technical issues arise in this construction. The map $O \to \lambda (\mathfrak m(O))$ is continuous, and so is $(O, O') \mapsto  O \cap O'$. Similarly, the 
map $O \mapsto \inf O$ ($O \mapsto \sup O$, respectively) is measurable and continuous at $O \in \mathcal V$ if and only if $0 \in O$ ($1 \in O$, respectively). The map $O \to \mathfrak m(O)$ is measurable and continuous at $O \in \mathcal V$ if only if the largest interval in $O$ is 
unique. For any fixed $t \geq 0$, the map $f \to \mathcal P_f(t)$ is not continuous on $\Cex$. The set $\mathcal W$ is not closed when endowing the set of all $\mathcal V$-valued processes with the topology of uniform convergence on compact sets. 
The following important lemma contains a positive result in the converse direction. Here and subsequently, we recall the definition of the modulus of continuity of a continuous function $f$ on $[0,1]$:
$$\omega_f(\varepsilon) = \sup_{|s-t| \leq \varepsilon} |f(t)  - f(s)|, \quad \varepsilon >0.$$
By the Arzela-Ascoli theorem, for a family of continuous functions $(f_i)$ on $[0,1]$, we have $\sup_{i} \omega_{f_i}(\varepsilon) \to 0$ as $\varepsilon \to 0$ if $(f_i)$ is relatively compact. (In other words, the family
is uniformly equicontinuous.)
\begin{lem} \label{lem_conv_one}
Let $f,f_n, n \geq 1$ be continuous excursions. Suppose that, uniformly on compact sets, we have $d((\mathcal P_{f_n}(t), \mathcal P_f(t)) \to 0$. Then, $\|f_n - f\| \to 0$.  
\end{lem}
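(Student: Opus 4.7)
My plan is to translate the hypothesized Hausdorff convergence of the complements $\mathcal{P}_{f_n}(t)^c$ to $\mathcal{P}_f(t)^c$ into a pointwise comparison between $f_n$ and $f$, and then to close the argument using the uniform continuity of $f$ on the compact interval $[0,1]$.

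Step 1 (pointwise envelopes from the Hausdorff bound). Pick $T > \|f\|_\infty$; since $\mathcal{P}_f(t) = \emptyset$ for $t > T$, the hypothesis gives $\eta_n := \sup_{t \in [0,T]} d(\mathcal P_{f_n}(t), \mathcal P_f(t)) \to 0$. For a continuous non-negative function $g$ on $[0,1]$ and $\eta > 0$, set
$$\phi_{g,\eta}(x) := \inf \{ g(y) : y \in [0,1],\ |y-x| \leq \eta\}.$$
A direct compactness argument shows $\{x : \phi_{g,\eta}(x) \leq t\} = (\mathcal P_g(t)^c)_\eta$, the closed $\eta$-neighborhood of $\mathcal P_g(t)^c$. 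Rewriting $d_H(\mathcal P_{f_n}(t)^c, \mathcal P_f(t)^c) \leq \eta_n$ as the two-sided inclusion $\mathcal P_{f_n}(t)^c \subseteq (\mathcal P_f(t)^c)_{\eta_n}$ and its symmetric counterpart, and then evaluating these at $t = f_n(x)$ and $t = f(x)$ respectively, yields
$$\phi_{f,\eta_n}(x) \leq f_n(x), \qquad \phi_{f_n,\eta_n}(x) \leq f(x), \qquad \text{for every } x \in [0,1].$$

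Step 2 (one-sided bound from uniform continuity of $f$). Since $f$ is continuous on the compact interval, its modulus of continuity $\omega_f$ satisfies $\omega_f(\eta) \to 0$ as $\eta \to 0$, and trivially $\phi_{f,\eta_n}(x) \geq f(x) - \omega_f(\eta_n)$. The first inequality of Step 1 then produces
$$f_n(x) \geq f(x) - \omega_f(\eta_n), \qquad x \in [0,1],$$
uniformly in $x$.

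Step 3 (matching upper bound). The symmetric bound $f_n(x) \leq f(x) + o(1)$ is the harder direction, since the Hausdorff hypothesis controls the location of the level crossings of $f_n$ but does not, at a single level, constrain the size of $f_n$ on the interior of a superlevel component. I would argue by contradiction: if $\|f_n - f\|_\infty \not\to 0$, then, using Step 2 and compactness, extract $\varepsilon > 0$ and a subsequence with $x_n \to x^\ast$ and $f_n(x_n) \geq f(x^\ast) + 3\varepsilon$. For every $t$ in the window $(f(x^\ast) + \varepsilon, f(x^\ast) + 2\varepsilon)$, uniform continuity of $f$ places the entire neighborhood $[x^\ast - \delta, x^\ast + \delta]$ in $\mathcal P_f(t)^c$ for some fixed $\delta > 0$, while $x_n$ lies in the component of $\mathcal P_{f_n}(t)$ containing it. The second inclusion of Step 1 then forces the endpoints of this component to be within $\eta_n$ of $[x^\ast - \delta, x^\ast + \delta]$. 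Iterating the argument across the entire $t$-window and using that $\eta_n$ is uniform in $t$ on $[0,T]$, the component containing $x_n$ must persist across an interval of levels of length $\geq \varepsilon$; I would then pass to the limit in $n$ and, combining with the first inclusion at the top of the window, derive a contradiction with the vanishing of $\eta_n$.

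Step 4 (conclusion). Combining the bounds from Steps 2 and 3 gives $\|f_n - f\|_\infty \leq \omega_f(\eta_n) + o(1) \to 0$.

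The main obstacle is Step 3. The one-sided inequality in Step 2 is a clean translation of the Hausdorff hypothesis via the lower envelope $\phi_{f,\eta}$ and the continuity of $f$, but the reverse inequality cannot be read off from the hypothesis at any single level: what is needed is an equicontinuity-type consequence of the uniformity in $t$, which must be extracted by chaining the approximate inclusions over a range of levels rather than just one.
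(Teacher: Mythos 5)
Your Steps 1 and 2 are correct and give a clean derivation of the one-sided bound $f_n \geq f - \omega_f(\eta_n)$; the envelope $\phi_{f,\eta}$ precisely encodes the inclusion $\mathcal P_{f_n}(t)^c \subseteq (\mathcal P_f(t)^c)_{\eta_n}$, and this reproduces the content of the first branch of the paper's case analysis. The genuine gap is Step 3, which you correctly flag but do not close, and I believe no chaining argument can close it because the statement itself fails as written: the hypothesis only constrains the widths of the components of the superlevel sets, never their heights, so it does not rule out a tall narrow spike. Take for instance $f(s) = 4s(1-s)$ and $f_n := f + g_n$ with $g_n(s) := \max(0, 1 - n|s - 1/2|)$, a tent of height one supported on $[1/2 - 1/n, 1/2 + 1/n]$. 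Then $\|f_n - f\| = 1$ for every $n$, but for $t \leq 1 - 4/n^2$ one has $\mathcal P_{f_n}(t) = \mathcal P_f(t)$ exactly, while for $t > 1 - 4/n^2$ both $\mathcal P_{f_n}(t)^c$ and $\mathcal P_f(t)^c$ contain $[0, 1/2 - 1/n] \cup [1/2 + 1/n, 1]$, forcing their Hausdorff distance to be at most $1/n$. Hence $\sup_{t \geq 0} d(\mathcal P_{f_n}(t), \mathcal P_f(t)) \leq 1/n \to 0$ while $\|f_n - f\| \not\to 0$. Some additional hypothesis on $\|f_n\|$ (say $\limsup_n \|f_n\| \leq \|f\|$) is needed before Step 3 becomes provable.

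It is worth noting that the paper's own proof, which fixes $t$, sets $x_n := f_n(t)$, and splits on whether $t \in \partial \mathcal P_{f_n}(x_n)$, shares the same blind spot: both branches produce a comparison point $t' \in f^{-1}(\{x_n\})$, which does not exist when $x_n > \|f\|$, and the asserted membership $\sup I_n \in \partial \mathcal P_{f_n}(x_n)$ in the second branch fails when $t$ is a strict local maximum of $f_n$ --- exactly the geometry the spike realizes. So your Step-3 difficulty is genuine and points to a real issue, not a deficiency of execution.
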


\begin{proof}
 For ease of notation, abbreviate $\mathcal P_n := \mathcal P_{f_n}, n \geq 1$ and $\mathcal P := \mathcal P_{f}$. Fix $\varepsilon > 0$ and $n$ large enough such that 
 $d((\mathcal P_n)_t, \mathcal P_t) \leq \varepsilon$ for all $0 \leq t \leq \|f\|$. Fix $t \in [0,1]$ and let $x_n = f_n(t)$. Suppose that $t \in \partial \mathcal P_n(x_n)$. Then, 
 there exists $t'_n \in f^{-1}(\{x_n\})$ with $|t'_n - t| \leq \varepsilon$ and $t'_n \in \partial \mathcal P(x_n)$. This implies $|f(t) - f_n(t)| \leq \omega_f(\varepsilon)$. If $t \notin \partial \mathcal P_n(x_n)$, then 
 $f_n = x_n$ on some closed interval $I_n$ containing $t$ which may choose maximal. 
 If $\sup I_n < t + 2 \varepsilon$, then, since $\sup I_n \in \partial \mathcal P_n(x_n)$, we have $|f_n(t) - f(t)| \leq |f_n(\sup I_n) - f(\sup I_n)| + |f(\sup I_n) - f(t)| \leq 2 \omega_f(2\varepsilon)$ from the first part of the proof. 
 The same bound follows if $\inf I > t - 2 \varepsilon$.
 Now, assume $[t-2\varepsilon, t + 2 \varepsilon] \subseteq I$. Then, we must have $f \geq x_n$ on $[t-\varepsilon, t + \varepsilon]$. 
 If $f(t) \neq f_n(t)$, since $f > x_n$ is not possible on the entire interval $[t-2\varepsilon, t +2\varepsilon]$, there exists $t_n' \in [t-2\varepsilon, t + 2 \varepsilon]$ with $t_n' \in \partial \mathcal P(x_n)$. 
 As above, this implies
 $|f(t) - f_n(t)| \leq  \omega_f(2\varepsilon)$.
  Since $f$ is continuous, we have $\omega_f(\varepsilon) \to 0$ as $\varepsilon \to 0$ finishing the proof.
  \end{proof}

\medskip \noindent \textbf{The Skorokhod space.} Let $(S,d)$ be a Polish space. By $\mathcal D_S$ we denote the set of \cadlag \ functions with values in $S$.  A function $f: [0,\infty) \to S$ is called \cadlag \ if, for all $t \geq 0$,  
it is right-continuous at $t$ and, for all $t > 0$, the left limit $f(t-) := \lim_{s \uparrow t} f(s)$ exists.  
For every $f \in \mathcal D_S$, the set of 
discontinuities $\{t \in [0, \infty) : f(t) \neq f(t-) \}$ is at most countable. 
$\mathcal D_S$ is endowed with the Skorokhod topology: a sequence $f_n, n \geq 1$ converges to a function $f$ if and only if there exists a sequence of strictly increasing continuous functions 
$\lambda_n : [0, \infty)  \to  [0, \infty)$ such that $\lambda_n \to \text{id}$ uniformly on $[0, \infty)$ and $f_n \circ \lambda_n \to f$ uniformly on compact sets. 
If $f_n \to f$ in the Skorokhod topology, and $f$ is continuous at $t \in [0, \infty)$, then
$f_n(t) \to f(t)$. $\mathcal D_S$ is a Polish space, and the Borel-$\sigma$-algebra is generated by the family of projections $\pi_t: \mathcal D_S \to S, \pi_t(f) = f(t), t \geq 0$. All these properties and more information on $\mathcal D_S$
can be found in Billingsley's book \cite{bil68}.
Again, one can easily check that $f \mapsto \mathcal P_{f} $ is not continuous on $\Cex$. Further, $\mathcal W \subseteq \mathcal D_{\mathcal V}$ is not closed. 
($\partial \mathcal{W}$ contains processes generated by lower semi-continuous functions which are not even \cadlag.) The following lemma is crucial.
\begin{lem}
The set $\mathcal W \subseteq \mathcal D_{\mathcal V}$ endowed with its relative topology is Polish. In particular, $\mathcal W$ is measurable with respect to the Borel-$\sigma$-algebra on $\mathcal D_{\mathcal V}$. 
Also, the map 
$f \mapsto \mathcal P_f$ from $\Cex$ to $\mathcal D_{\mathcal V}$ is measurable.
\end{lem}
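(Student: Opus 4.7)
The plan is to establish the three claims in order: measurability of $\Phi : f \mapsto \mathcal P_f$, Borel measurability of $\mathcal W$, and finally Polishness of $\mathcal W$ in its relative topology. The key observation driving the first two steps is that the Borel $\sigma$-algebra on $\mathcal D_{\mathcal V}$ is generated by the projections $\pi_t, t \geq 0$ (recalled in the excerpt), and that the topology on $\mathcal V$ has a convenient countable subbase.

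For the measurability of $\Phi$, I would show that $\pi_t \circ \Phi : f \mapsto \mathcal P_f(t) = \{s : f(s) > t\}$ is in fact continuous from $\Cex$ (uniform topology) to $\mathcal V$ for every fixed $t$. A subbase for the topology on $\mathcal V$ consists of the sets $U_W = \{O \in \mathcal V : O \cap W \neq \emptyset\}$ for $W \subseteq [0,1]$ open, together with their complements. An elementary argument using the Hausdorff distance shows $U_W$ is open (if $x \in O \cap W$ then $d(x, O^c) > 0$, and Hausdorff closeness forces $x \in O'$ for $O'$ nearby). The preimage $\{f : \mathcal P_f(t) \in U_W\}$ equals $\{f : \sup_{s \in W} f(s) > t\}$, which is open in $\Cex$ by continuity of the supremum functional. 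Since $\mathcal V$ is second-countable one can restrict to countably many $W$, so continuity of $\pi_t \circ \Phi$ follows and $\Phi$ is Borel measurable.

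For the second claim, $\Phi$ is a Borel measurable injection between Polish spaces (as $\Cex$ is Polish and $\mathcal D_{\mathcal V}$ is Polish by Billingsley), so the Lusin--Souslin theorem yields directly that $\mathcal W = \Phi(\Cex)$ is Borel in $\mathcal D_{\mathcal V}$.

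The genuine work lies in proving Polishness: by Alexandrov's theorem, a subspace of a Polish space is Polish in the relative topology iff it is a $G_\delta$, so I would refine the Borel description of $\mathcal W$ into a $G_\delta$ description. Condition (ii) is automatic on $\mathcal D_{\mathcal V}$. For (i), I would first verify that $\{(O_1, O_2) \in \mathcal V^2 : O_1 \subseteq O_2\}$ is closed (if $O_1^n \subseteq O_2^n$ then $(O_2^n)^c \subseteq (O_1^n)^c$, so distances satisfy $d(x, (O_2^n)^c) \geq d(x, (O_1^n)^c)$, which passes to the Hausdorff limit), and use right-continuity of càdlàg paths to reduce monotonicity to the countable family of conditions $\mathcal P_t \subseteq \mathcal P_s$ for rational $s < t$. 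Condition (iii) I would rewrite as $\bigcup_{m \in \N} \{\mathcal P : \mathcal P_m = \emptyset\}$, and combine with (i) to replace it by the single requirement $\mathcal P_m = \emptyset$ for some $m$, which localises on a countable set. Condition (iv) would similarly be pushed to a countable union-intersection of boundary-comparison conditions. The main obstacle is that the projections $\pi_t$ are not continuous on $\mathcal D_{\mathcal V}$, so preimages of closed sets need not be closed; to get genuine $G_\delta$ (and not merely Borel) structure, I would sandwich each condition using an open perturbation $\{\mathcal P : \pi_s(\mathcal P) \in V_k, \pi_t(\mathcal P) \in W_k\}$ over shrinking neighbourhoods $V_k, W_k$, leveraging right-continuity and the fact that discontinuities of a càdlàg path form an at-most-countable set to ensure that verification at rational arguments arbitrarily close to the ``bad'' $t$ suffices. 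This $G_\delta$-wrapping argument is the technically delicate step, and is where I expect the real bookkeeping to occur.
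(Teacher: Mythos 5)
Your step (1) has a genuine error: the map $\pi_t\circ\Phi : f\mapsto\mathcal P_f(t)$ is \emph{not} continuous on $\Cex$, and the paper says so explicitly just before the lemma (``For any fixed $t \geq 0$, the map $f\mapsto\mathcal P_f(t)$ is not continuous on $\Cex$''). Concretely, take $f\in\Cex$ with $f\equiv t$ on an interval $(a,b)$ and $f<t$ immediately to its left and right; then $f_n = f + 1/n\to f$ uniformly, but $(a,b)\subseteq\mathcal P_{f_n}(t)$ for all $n$ while $(a,b)\cap\mathcal P_f(t)=\emptyset$, so $\lambda(\mathcal P_{f_n}(t)\,\Delta\,\mathcal P_f(t))\not\to 0$. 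The defect in your subbase argument is that a subbase for the Vietoris/Hausdorff topology on $\mathcal V$ must contain both ``hit'' sets and ``miss'' sets; you check that preimages of $U_W=\{O:O\cap W\neq\emptyset\}$ are open, but the preimages of the miss-type sets $\{O:O^c\cap U\neq\emptyset\}=\{O:U\not\subseteq O\}$ are $\{f:\exists s\in U,\ f(s)\leq t\}$, which are not open (they are closed when $U$ has compact closure disjoint from $\{0,1\}$). Moreover $U_W^c=\{O:O\cap W=\emptyset\}$ is \emph{closed} in $\mathcal V$, so ``$U_W$ together with their complements'' is not a collection of open sets, hence not a subbase. One can still prove $\Phi$ is Borel by writing the relevant preimages as countable intersections over rationals, but the continuity claim, and the argument as written, is wrong.

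Your third step is also structurally quite different from — and considerably harder than — the paper's route. You propose to show $\mathcal W$ is $G_\delta$ via Alexandrov; this is valid in principle, but the $G_\delta$-wrapping of conditions (i)--(iv), especially (iv) ($x\in\partial\mathcal P(t)\Rightarrow x\notin\partial\mathcal P(s)$ for all $s>t$), is nontrivial and your sketch waves at it. The paper instead proves a dedicated lemma (Lemma 11) showing that the \emph{inverse} assignment $\mathcal P\mapsto f_{\mathcal P}$ is continuous from $(\mathcal W, d_{\mathrm{sk}})$ to $(\Cex,\|\cdot\|)$. Continuity in this reverse direction does hold, unlike the forward direction you relied on, and it is the key structural fact. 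From there the paper introduces the explicit metric $d^*(\mathcal P,\mathcal P') = \|f_{\mathcal P}-f_{\mathcal P'}\| + d_{\mathrm{sk}}(\mathcal P,\mathcal P')$: the reverse continuity ensures that $d^*$ generates the same topology as $d_{\mathrm{sk}}$ on $\mathcal W$, and completeness of $(\mathcal W,d^*)$ is then an easy consequence of completeness of $d_{\mathrm{sk}}$ on $\mathcal D_{\mathcal V}$ and of $\|\cdot\|$ on $\Cex$. This avoids the $G_\delta$ bookkeeping entirely. With $\mathcal W$ Polish in hand, \emph{both} measurability claims drop out of Lusin--Souslin in one shot — applied to the continuous injection $\mathcal W\hookrightarrow\mathcal D_{\mathcal V}$ for Borel-ness of $\mathcal W$, and to the continuous bijection $\mathcal P\mapsto f_{\mathcal P}:\mathcal W\to\Cex$ (whose inverse is therefore Borel) for measurability of $f\mapsto\mathcal P_f$ — which is the reverse of your ordering and removes the need for a standalone measurability argument for $\Phi$.

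Summary: the two measurability conclusions in your steps (1)--(2) could in principle be salvaged with a correct Borel (not continuity) argument, and the $G_\delta$ route to Polishness is an alternative, but the specific continuity claim is false and the overall structure is both incorrect at that point and substantially less economical than the paper's proof.
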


\begin{proof}
Let us first show that $\mathcal P \mapsto f_{\mathcal P}$ is continuous regarded as map $\mathcal W \to \Cex$. To this end, 
let $\mathcal P, \mathcal P_n, n \geq 1$ be elements in $\mathcal W$ with $\mathcal P_n \to \mathcal P$ in the Skorokhod topology. Choose a sequence $\lambda_n, n \geq 1$ of strictly increasing continuous bijections on $[0,\infty)$ with
$\lambda_n \to \text{id}$ uniformly on $[0,\infty)$ and $\mathcal P_n \circ \lambda_n \to \mathcal P$ uniformly on compact sets. By Lemma \ref{lem_conv_one}, $ \|f_{\mathcal P_n \circ \lambda_n} - f_{\mathcal  P}\| \to 0$. Hence,  it remains to show that
$\|f_{\mathcal P_n \circ \lambda_n} - f_{\mathcal P_n}\| \to 0$. But for any $\mathcal P' \in \mathcal W$ and any strictly increasing bijection $\lambda$, we have $f_{\mathcal P' \circ \lambda} = \lambda^{-1} \circ f_{\mathcal P'}$. Thus, 
$\|f_{\mathcal P_n \circ \lambda_n} - f_{\mathcal P_n}\| \leq \sup_{t > 0} |\lambda(t) - t| \to 0$. This shows the claimed continuity.

In view of Lemma \ref{lem_conv_one}, for $\mathcal P, \mathcal P' \in \mathcal W$, define $$d^*(\mathcal P, \mathcal P') = \| f_{\mathcal P} - f_{\mathcal P'} \| + d_{\text{sk}}(\mathcal P, \mathcal P'),$$
where $d_{\text{sk}}$ denotes any complete metric generating the Skorokhod topology on $\mathcal D_{\mathcal V}$. (See \cite{bil68} for an explicit construction.)  Since $(\mathcal D_{\mathcal V}, d_{\text{sk}})$ 
is separable, the same follows for $(\mathcal W, d_{\text{sk}})$. From the continuity of $\mathcal P \mapsto f_{\mathcal P}$ it follows that $(\mathcal W, d^*)$ is separable.
If $\mathcal P_n, n \geq 1$ is Cauchy with respect to $d^*$, then it is Cauchy with respect to $d_{\text{sk}}$. Hence, there exists a $d_{\text{sk}}$-limit $\mathcal P' \in \mathcal D_{\mathcal V}$. Further, 
by definition and completeness of the supremum norm, there exists $g \in \Cex$ with
$\| f_{\mathcal P} - g \| \to 0$. Clearly, this implies $g = f_{\mathcal P'}$. Hence, $\mathcal W$ is complete with respect to $d^*$. 
By construction, the embedding $\mathcal W \to \mathcal D_{\mathcal V}$ is continuous. 
Both measurability of $\mathcal W$ and measurability of $f \mapsto \mathcal P_f$ now follow from the Lusin-Suslin theorem \cite[Theorem 15.1]{kechris}.\end{proof}

Finally, one also has to verify measurability of the quantities arising in the construction of the heavy path.

\begin{lem}
 The maps $\mathcal P \mapsto T_\infty$ and $\mathcal P \to \mathcal P^*$ are measurable.
\end{lem}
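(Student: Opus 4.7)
The plan is to express $T_\infty$ and $\mathcal P^*$ as measurable functionals of $\mathcal P$ by following, step by step, the inductive recipe used to define them. Since $\mathcal W$ is Polish by the preceding lemma and the Borel $\sigma$-algebra on $\mathcal D_{\mathcal V}$ is generated by the coordinate projections $\pi_t : \mathcal P \mapsto \mathcal P_t$, it suffices to show that each $T_n$ is a measurable functional of $\mathcal P$ and that the evaluation $\mathcal P \mapsto \mathcal P^*_t$ is measurable for every fixed $t \geq 0$. Then $T_\infty = \sup_{n \geq 0} T_n$ is measurable as a countable supremum, and $\mathcal P \mapsto \mathcal P^*$ is measurable by the projection criterion.

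I would argue by induction on $n \geq 0$. The base case $T_0 \equiv 0$, $\mathcal P^*_0 = \pi_0(\mathcal P)$ is immediate. For the induction step, having verified measurability of $T_n$ and of $\mathcal P^*_{T_n}$, consider the real-valued auxiliary process
$$g_n(t) := \lambda\bigl(\mathfrak m(\mathcal P^*_{T_n} \cap \mathcal P_t)\bigr), \qquad t \geq T_n.$$
Since $t \mapsto \mathcal P_t$ is \cadlag in $\mathcal V$ and both $(O_1,O_2) \mapsto O_1 \cap O_2$ and $O \mapsto \lambda(\mathfrak m(O))$ are continuous on $\mathcal V$ (as recorded earlier in this appendix), $g_n$ is \cadlag in $t$ and jointly measurable in $(\mathcal P, t)$. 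Consequently
$$T_{n+1} = \inf\{t \geq T_n : g_n(t) \leq 2^{-(n+1)}\}$$
is measurable in $\mathcal P$, which may be checked through the representation $\{T_{n+1} \leq s\} = \{\inf_{q \in \mathbb Q \cap [T_n, s]} g_n(q) \leq 2^{-(n+1)}\}$ combined with right-continuity of $g_n$ and measurability of $T_n$. For $T_n < t < T_{n+1}$, $\mathcal P^*_t = \mathfrak m(\mathcal P^*_{T_n} \cap \mathcal P_t)$ depends measurably on $\mathcal P$ because $\mathfrak m$ is measurable on $\mathcal V$. At the endpoint, the left limit $\lim_{s \uparrow T_{n+1}} \mathcal P^*_s$ exists (since $\mathcal P^* \in \mathcal W$ is \cadlag) and depends measurably on $\mathcal P$, and consequently so does $\mathcal P^*_{T_{n+1}} = \mathfrak m(\lim_{s \uparrow T_{n+1}} \mathcal P^*_s \cap \mathcal P_{T_{n+1}})$, again using the measurability of $\mathfrak m$.

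The main technical obstacle is the tie-breaking rule built into $\mathfrak m$. Because the longest interval in $O \in \mathcal V$ need not be unique, the map $O \mapsto \mathfrak m(O)$ is only Borel measurable (not continuous), so the composition argument cannot be pushed through with a single continuity appeal at each induction step; one must instead invoke the explicitly stated measurability of $\mathfrak m$ and verify that the joint measurability in $(\mathcal P, t)$ survives each composition. A secondary point is the need to express $T_{n+1}$ as a measurable functional without access to a completed filtration, which is handled cleanly by the dense-rational approximation above once right-continuity of $g_n$ has been established.
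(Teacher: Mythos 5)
Your proposal follows essentially the same route as the paper: both arguments run an induction on $n$, establishing at each step that $T_n$ is a measurable functional of $\mathcal P$ and that the partial construction $\mathcal P^*_t$ on $[T_n,T_{n+1})$ is measurable, using the measurability (not continuity) of $\mathfrak m$ and the continuity of $\lambda \circ \mathfrak m$ and of intersection. The paper phrases the key step in stopping-time language (hitting times of closed sets by right-continuous adapted processes, measurability of $\mathcal P \mapsto \mathcal P_T$ and $\mathcal P \mapsto \mathcal P_{-}$), whereas you compute more directly; these are equivalent in substance. One small slip: the identity $\{T_{n+1} \leq s\} = \{\inf_{q \in \mathbb{Q}\cap[T_n,s]} g_n(q) \leq 2^{-(n+1)}\}$ can fail at a downward jump of $g_n$ located at an irrational $s$, since the rational infimum then only sees $g_n(s-)$. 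This is harmless here because $g_n$ is non-increasing (intersections with a shrinking $\mathcal P_t$ only shrink $\mathfrak m$), so one simply has $\{T_{n+1} \leq s\} = \{g_n(s) \leq 2^{-(n+1)}\}\cap\{T_n \leq s\}$, which is exactly the monotone-hitting-time observation the paper subsumes under ``standard arguments.''
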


\begin{proof}
 We keep track of more quantities in the construction. Set $\mathcal P_0^{(0)} = \mathcal P_0$.
 Inductively, for $n \geq 0$, 
 \begin{align*}
  \mathcal P^{(n)}_t& := \mathfrak m ( \mathcal P^{(n)}_{T_n}) , \quad t < T_n, \\
 \mathcal P^{(n)}_t& := \mathfrak m ( \mathcal P^{(n)}_{T_n} \cap \mathcal P_t), \quad t > T_n, \\
 T_{n+1} &= \inf \{ t > 0: \lambda(\mathcal P^{(n)}_t) \leq  2^{-(n+1)} \}, \\ 
 \mathcal P^{(n+1)}_{T_{n+1}} & := \mathfrak m \left( \mathcal P^{(n)}_{T_{n+1}-} \cap \mathcal P_t \right). \end{align*}
(The third line is merely an observation.)
 Then, $\mathcal P_t^* = \sum_{i=0}^\infty \mathbf 1_{[T_i, T_{i+1})}(t) \mathcal P_t^{(i)}$.
In order to show that $\mathcal P \to \mathcal P^*$ is measurable, we need to verify that, for all $i \geq 0$,  $\mathcal P \to \mathcal P^{(i)}$ is measurable and that
$T_i$ is a stopping-time with respect to the family of $\sigma$-algebras $\mathcal F_t = \sigma ( \{\pi_s : 0 \leq s \leq t \})$. (This means that $\{T_i \leq t\} \in \mathcal F_t$ for all $t \geq 0$.)
This can be done by induction on $i$. Clearly, $\mathcal P_t^{(0)}$ is measurable. $T_1$ is a hitting-time of a closed set, therefore a stopping time by standard arguments.  Further, it is well-known that
$\mathcal P \to \mathcal P_T$ is measurable for any stopping-time $T$. Finally, the map $\mathcal P \mapsto \mathcal P_- := (\mathcal P_{t-}), t \geq 0$ is measurable. Hence, $\mathcal P_t^{(1)}$ is measurable. Now, proceed  inductively. Measurability of 
$\mathcal P \mapsto T_\infty$ follows since $T_\infty$ is the limit of measurable functions.
 \end{proof}

\medskip \noindent \textbf{Continuity properties.}
For $f \in \Cex$, define
$$M_f(x) = \{(s,t) : 0 \leq s < t \leq 1, f(s) = f(t) = x, f > x \text{ on } (s,t) \}, \quad x \geq 0.$$
Now, let
\begin{align*}
 \Cex^{(1)} =  \{ f \in \Cex : & \text{ For all } 0 \leq x \leq \|f \| \text{ there exists at most one pair } (s,t) \in M_f(x) \\ &  \text{ maximizing } t-s \}
\end{align*}
and
\begin{align*}
 \Cex^{(2)} = \{ f \in \Cex : & \text{ For all } t \geq 0 \text{ there exists at most one value } x \in (\mathcal P^*_f(t))^{\mathrm{o}} \\ & \text{ with } f(x) = f(\inf \mathcal P^*_f(t)) \}.
\end{align*}

In a Brownian excursion, all local minima are strict and pairwise distinct. Hence, for all $x \geq 0$, the set $M_x^f$ contains at most two elements and $\mathbf e \in \Cex^{(2)}$. It is well-known that every local minima $t$ 
does not decompose the interval $(\sup \{s< t : \mathbf e(s) > \mathbf e(t)\}, \inf \{ s > t : \mathbf e(s) > \mathbf e(t)\})$ equidistantly. Hence, $\mathbf e \in \Cex^{(1)}$.
For $f \in \Cex$, define
$$\mathbf m_f(t) := \lambda(\mathcal P^*_f(t)), \quad t \geq 0, \quad \zeta_f(t) := \inf \{s > 0: \mathbf m_f(s) \leq t\}, \quad t \in [0,1]. $$
 The map $t \mapsto \zeta_f(t)$ is continuous. Every point of discontinuity of $\mathcal P_f^*$ (or, equivalently, of $\mathbf m_f$) corresponds to an interval on which
$\zeta_f$ is constant.
For $f \in \Cex$  let $M_f = \{\mathbf m_f(t) : t  \geq 0 \}$. 
Further, for $r \geq 0$ and $f \in \Cex$, set
$$f^*_r(t) := (f(t) - f(\inf \mathcal P^*_f(r)) \mathbf 1 _{\mathcal P^*_f(r)}(t).$$
Clearly, if $f \in \Cex^{(1)}$ then $f^*_r \in \Cex^{(1)}$, analogously for $\Cex^{(2)}$. We now set $\Cex^* = \Cex^{(1)} \cap \Cex^{(2)}$.

In the following lemma, recall that, for a \cadlag\ function $f$ with values in a Polish space and $t > 0$, we have set $f(t-) := \lim_{s \uparrow t} f(s)$.
\begin{lem} \label{lem_cont}
 Let $f_n, n \geq 1$ be a sequence of continuous excursions and $f \in \Cex^*$. Suppose that $\|f_n - f\| \to 0$. Let
 $r \in M_f$ with $\mathbf m_f(\zeta_f(r)-) \geq \frac 1 2 \mathbf m_f(0)$. Then, there exists a sequence $r_n \to r$ with $\zeta_{f_n}(r_n) \to \zeta_{f}(r)$ such that
 \begin{align} \label{rightl} d( \mathcal P^*_{f_n}(\zeta_{f_n}(r_n)), \mathcal P^*_{f}(\zeta_f(r)) \to 0, \end{align}
 \begin{align} \label{leftl} d( \mathcal P^*_{f_n}(\zeta_{f_n}(r_n)-), \mathcal P^*_{f}(\zeta_f(r)-)) \to 0, \end{align}
 and 
 $\|(f_n)^*_{\zeta_{f_n}(r_n)} - f^*_{\zeta_f(r)} \| \to 0.$
 
\end{lem}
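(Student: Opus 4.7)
The plan is to reduce the lemma to a continuity property of the largest-component operation $\mathfrak m$ along generic levels, using the hypotheses $f \in \Cex^{(1)} \cap \Cex^{(2)}$ to rule out the only obstructions to continuity. First I would record the basic sandwich $\mathcal P_f(x + \varepsilon_n) \subseteq \mathcal P_{f_n}(x) \subseteq \mathcal P_f(x - \varepsilon_n)$ with $\varepsilon_n := \|f_n - f\| \to 0$. Since $\lambda(\{f = x\}) > 0$ at only countably many $x$, this forces $d(\mathcal P_{f_n}(x_n), \mathcal P_f(x)) \to 0$ whenever $x_n \to x$ and $\lambda(\{f=x\}) = 0$. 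The crucial simplification coming from $\mathbf m_f(\zeta_f(r)-) \geq \tfrac 1 2 \mathbf m_f(0)$ is that $\zeta_f(r)$ lies at or before the first restart time $T_1^f$ of the heavy-path recursion on $f$; in this initial phase $\mathcal P^*_f(t) = \mathfrak m(\mathcal P_f(t))$ is literally the largest component of the superlevel set, and $f \in \Cex^{(1)}$ makes this component unique at all but countably many levels. Consequently $d(\mathfrak m(\mathcal P_{f_n}(x_n)), \mathcal P^*_f(x)) \to 0$ for generic $x_n \to x$ in $[0, T_1^f]$.

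Next I would define $r_n$ by case analysis at $t_0 := \zeta_f(r)$. If $\mathbf m_f$ is continuous at $t_0$, I set $r_n := \mathbf m_{f_n}(t_0)$; superlevel-set convergence and monotonicity of $\mathbf m_{f_n}$ then give $r_n \to r$ and $\zeta_{f_n}(r_n) \to t_0$. If $\mathbf m_f$ jumps at $t_0$, so that $r \in [\mathbf m_f(t_0), \mathbf m_f(t_0-)]$, then by $\Cex^{(1)}$ the jump corresponds to a unique splitting of the enclosing component of $\mathcal P_f(t_0-)$, and the first-paragraph convergences imply that $\mathbf m_{f_n}$ exhibits a jump of comparable magnitude at some nearby time $t_n \to t_0$. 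I would then choose $r_n$ strictly inside this jump interval with $r_n \to r$, so that $\zeta_{f_n}(r_n) = t_n \to t_0$. The convergences \eqref{rightl} and \eqref{leftl} then follow by applying the continuity of $\mathfrak m$ to the pairs $(\mathcal P_{f_n}(t_n), \mathcal P_{f_n}(t_n-))$: no alternative limit is admissible, because uniqueness of the largest component on each side of the jump identifies the selected intervals for $f_n$ with the correct $f$-limits.

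Finally, the sup-norm convergence of $(f_n)^*_{\zeta_{f_n}(r_n)} \to f^*_{\zeta_f(r)}$ reduces to combining $\|f_n - f\| \to 0$ with the Hausdorff convergence of $\mathcal P^*_{f_n}(\zeta_{f_n}(r_n))$ to $\mathcal P^*_f(t_0)$; the latter yields in particular convergence of the infima and hence of the shifts $f_n(\inf \mathcal P^*_{f_n}(\zeta_{f_n}(r_n))) \to f(\inf \mathcal P^*_f(t_0))$, allowing us to control the restricted, recentered functions simultaneously on converging domains. The hardest part will be the jump case: the left- and right-states of the heavy-path interval differ discontinuously at $t_0$, and the $f_n$-construction must be shown to track the \emph{correct} component on each side. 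This is precisely where $f \in \Cex^{(2)}$ is indispensable, since it rules out configurations in which two interior points of $\mathcal P^*_f(t_0-)$ simultaneously attain the boundary level $r$—such a coincidence would allow the $f_n$-construction to legitimately select a different component and would destroy continuity of $\inf \mathcal P^*$ used in the shift convergence.
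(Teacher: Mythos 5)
Your proposal follows essentially the same route as the paper's proof: reduce the sup-norm claim to Hausdorff convergence of the selected intervals, observe that the condition $\mathbf m_f(\zeta_f(r)-) \geq \tfrac 1 2 \mathbf m_f(0)$ keeps everything in the first phase of the heavy-path recursion (so $\mathcal P^*_f(t) = \mathfrak m(\mathcal P_f(t))$), and split into the continuous and jump cases for $\mathbf m_f$ at $t_0 = \zeta_f(r)$, using $\Cex^{(1)}$ and $\Cex^{(2)}$ in the jump case to guarantee a unique split with unequal halves.

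The one step you leave imprecise is the central one: you assert that "$\mathbf m_{f_n}$ exhibits a jump of comparable magnitude at some nearby time $t_n \to t_0$" as a consequence of superlevel-set convergence at generic levels, and then "choose $r_n$ strictly inside this jump interval." That deduction is not automatic — you need to locate the split for $f_n$ explicitly, and the naive superlevel-set sandwich alone does not rule out, say, a cascade of small splits at nearby levels. The paper handles this by pinning down the split location: with $(\alpha,\beta) = \mathcal P^*_f(t_0-)$ and $x$ the unique interior strict minimum (unique by $\Cex^{(2)}$, off-center by $\Cex^{(1)}$), it sets $\alpha' = (\alpha+x)/2$, $\beta' = (\beta+x)/2$, defines $s_n := \inf\{f_n(s) : \alpha' < s < \beta'\}$, takes $x_n$ as the (leftmost or rightmost, matching which side $\mathcal P^*_f(t_0)$ falls on) minimizer of $f_n$ on $(\alpha',\beta')$, and then sets $r_n := \mathbf m_{f_n}(s_n)$. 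Since $f$ has a strict minimum at $x$ interior to $(\alpha',\beta')$, uniform convergence forces $s_n \to t_0$ and $x_n \to x$, and the two components of $\mathcal P_{f_n}(s_n)$ near $(\alpha,\beta)$ converge to $(\alpha,x)$ and $(x,\beta)$, giving both \eqref{rightl} and \eqref{leftl} at once. Your plan is right, but you would need to supply this (or an equivalent) explicit construction; the comparable-jump claim is the content, not a consequence, of the argument. (Also a small slip: $\Cex^{(2)}$ concerns coincidences at the boundary \emph{height} $f(\inf\mathcal P^*_f(t_0-))$, not at the value $r$, which is a length in $[0,1]$.)
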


 \begin{proof}
It is easy to see that, for any $r,s \in [0,1]$ and $f, g \in \Cex$, we have
\begin{align*}
  | \| f^*_r & - g^*_s\|  -  \| f -g\| | \leq \\ 
 &  \omega_{f}( |\inf \mathcal P_{f}(\zeta_f(r)) - \inf \mathcal P_{g}(\zeta_g(s)| ) + 
\omega_{f}( |\sup \mathcal P_{f}(\zeta_f(r)) - \sup \mathcal P_{g}(\zeta_g(s))| ) \\ 
& + \omega_{g}( |\inf \mathcal P_{f}(\zeta_f(r)) - \inf \mathcal P_{g}(\zeta_g(s))| ) + 
\omega_{g}( |\sup \mathcal P_{f}(\zeta_f(r)) - \sup \mathcal P_{g}(\zeta_g(s))| ).
\end{align*}
Hence, the final claim of the lemma is a direct implication of the remaining statements.
If $\mathbf m_f$ is continuous at $\zeta_f(r)$, then we can simply choose $r_n = r$. In this case, if $r > \mathbf m_f(1/2)$, the assertions \eqref{rightl}, \eqref{leftl} even hold
for general $f \in \Cex$. The interesting case is when $\mathbf m_f$ is discontinuous at $\zeta_f(r)$ which we assume from now on. Let $\alpha = \inf  \mathcal P_f(\zeta_f(r)-)$ and
$\beta = \sup \mathcal P_f(\zeta_f(r)-)$. Since $f \in \Cex^{**}$ there exists a unique strict minimum $x$ of $f$ on $(\alpha, \beta)$ such that, either,  
i) $ \mathcal P_f(\zeta_f(r)) = (\alpha, x)$, or, ii) $ \mathcal P_f(\zeta_f(r)) = (x, \beta)$. We have $x \neq (\alpha + \beta)/2$ since $x \in \Cex^{*}$. Let 
$\alpha' = (\alpha+x)/2, \beta' = (\beta + x)/2$ and $s_n = \inf \{f_n(s) : \alpha' < s < \beta' \}$. In case of
i), let $x_n = \inf \{\alpha' < y < \beta' : f(y) = s_n \}$, while, for ii), we set $x_n = \sup \{\alpha' < y < \beta' : f(y) = s_n \}$.  Now, let $r_n = \mathbf m_{f_n}(s_n)$. Then, for all $n$ sufficiently large, 
there exist $\alpha_n< x_n < \beta_n$ such that $\mathcal P_{f_n}(s_n-) = (\alpha_n, \beta_n)$ and, for i), $\mathcal P_{f_n}(s_n) = (\alpha_n, x_n)$ while, for ii), $\mathcal P_{f_n}(s_n) = (x_n, \beta_n)$. We also have
$\alpha_n \to \alpha, \beta_n \to \beta$ and $x_n \to x$. All statements follow readily. 
\end{proof}

\begin{prop}
The map $f \mapsto \mathcal P_f^*$ is continuous at  every $f \in \Cex^*$.
\end{prop}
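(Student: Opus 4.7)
The plan is to verify the Skorokhod criterion: construct continuous strictly increasing bijections $\lambda_n:[0,\infty)\to[0,\infty)$ with $\lambda_n\to\mathrm{id}$ uniformly, such that $\mathcal P^*_{f_n}\circ\lambda_n\to\mathcal P^*_f$ uniformly on compact sets. First I would catalogue the discontinuities of $\mathcal P^*_f$ as an at most countable increasing sequence $s_1<s_2<\ldots$ accumulating only at $T^f_\infty$, with nested intervals $J_0:=\mathcal P^*_f(0)\supsetneq J_1:=\mathcal P^*_f(s_1)\supsetneq\ldots$ satisfying $\lambda(J_i)\to 0$. Invoking the second remark following the heavy path construction, I would re-run that construction using the adaptive thresholds $\alpha_n=\tfrac{1}{2}\lambda(\mathcal P^*_{T_n})$; this leaves $\mathcal P^*_f$ invariant and guarantees $\mathbf m_f(s_i-)\geq\tfrac{1}{2}\lambda(J_{i-1})$, which is precisely the hypothesis of Lemma \ref{lem_cont} once the construction is restricted to $J_{i-1}$.

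Next, I would inductively produce times $s_i^{(n)}$ with $s_i^{(n)}\to s_i$ as $n\to\infty$. For $i=1$, apply Lemma \ref{lem_cont} to $f$ and $f_n$ at $r=\mathbf m_f(s_1)$ to obtain $s_1^{(n)}\to s_1$, $d(\mathcal P^*_{f_n}(s_1^{(n)}),J_1)\to 0$, $d(\mathcal P^*_{f_n}(s_1^{(n)}-),J_0)\to 0$, and crucially $\|(f_n)^*_{s_1^{(n)}}-f^*_{s_1}\|\to 0$. The restricted excursion $f^*_{s_1}$ inherits the $\Cex^*$ properties on $J_1$, so the same lemma applies with $f^*_{s_1}$ and $(f_n)^*_{s_1^{(n)}}$ in place of $f$ and $f_n$, delivering $s_2^{(n)}\to s_2$, and so on. Given $\varepsilon>0$, choose $N$ large enough so that $\lambda(J_N)<\varepsilon$, and define $\lambda_n$ as the piecewise linear bijection of $[0,\infty)$ sending $s_i\mapsto s_i^{(n)}$ for $1\leq i\leq N$ (with $\lambda_n(0)=0$) and equal to the identity beyond $s_N+1$. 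Then $\lambda_n\to\mathrm{id}$ uniformly on compacts.

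To control $\mathcal P^*_{f_n}\circ\lambda_n$ on each continuity piece $(s_{i-1},s_i)$ with $i\leq N$, I use the formula $\mathcal P^*_f(t)=\mathfrak m(J_{i-1}\cap\mathcal P_f(t))$ together with the $n$-th analogue $\mathcal P^*_{f_n}(\lambda_n(t))=\mathfrak m(J_{i-1}^{(n)}\cap\mathcal P_{f_n}(\lambda_n(t)))$ for $J_{i-1}^{(n)}:=\mathcal P^*_{f_n}(s_{i-1}^{(n)})$. Uniform convergence follows from $\|f_n-f\|\to 0$, $d(J_{i-1}^{(n)},J_{i-1})\to 0$, and the fact that, since $f\in\Cex^{(1)}$, the largest subinterval of $J_{i-1}\cap\mathcal P_f(t)$ is strictly longer than any competitor by a positive margin, uniformly in $t$ ranging over closed subintervals of $(s_{i-1},s_i)$ (a compactness argument in the spirit of Arzel\`a-Ascoli). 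On the tail $[s_N,\infty)$, both $\mathcal P^*_f(t)$ and $\mathcal P^*_{f_n}(\lambda_n(t))$ have Lebesgue measure at most $\lambda(J_N)+o(1)<2\varepsilon$ for large $n$, so their Hausdorff distance is controlled.

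The main technical obstacle is this uniform-in-$t$ control on each continuity piece: Lemma \ref{lem_conv_one} gives only pointwise-in-$t$ convergence, and the $\mathfrak m$-operation is discontinuous precisely where the largest-interval tie-break threshold is approached. The defining conditions of $\Cex^{(1)}$ and $\Cex^{(2)}$ are used exactly to rule out, respectively, near-ties in largest-interval selection and degenerate boundary behaviour of $f$ on $\partial J_{i-1}$ when iterating Lemma \ref{lem_cont}; together they yield the strict separation needed to promote pointwise to uniform convergence.
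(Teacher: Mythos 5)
Your proposal has a fundamental gap in its very first step. You claim the discontinuities of $\mathcal P_f^*$ form an at most countable increasing sequence $s_1<s_2<\ldots$ accumulating only at $T_\infty^f$, and you build the whole subdivision on that claim. This is false for the functions the Proposition is designed to handle. For a Brownian excursion sample path (the main case of interest), the L{\'e}vy measure $\nu(dx)=2(2\pi x^3(1-x)^3)^{-1/2}\mathbf 1_{[1/2,1)}(x)\,dx$ has a non-integrable $(1-x)^{-3/2}$ singularity at $x=1$, so the associated subordinator $\xi$, and hence $\mathbf m_f$ and $\mathcal P_f^*$, has infinitely many jumps in every time interval before extinction. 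The discontinuities of $\mathcal P^*_f$ are therefore \emph{dense} in $[0,T^f_\infty)$, and there is no way to enumerate them as an increasing sequence with isolated terms. Your invocation of the adaptive-threshold remark doesn't help: the times $T_n$ produced by the heavy-path construction (with $\alpha_n=\tfrac12\lambda(\mathcal P^*_{T_n})$) do form a discrete sequence, but they are a strict subset of the discontinuities and you conflate the two; between $T_n$ and $T_{n+1}$ there will still be infinitely many jumps of $\mathcal P^*_f$, so the ``continuity pieces'' $(s_{i-1},s_i)$ you wish to control never exist.

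The paper's proof makes a crucially different choice of subdivision: it cuts at the times $\beta^{(\ell)}$ where the heavy-path interval has lost a fixed fraction $\varepsilon$ of its mass, $s^{(\ell)}=\zeta_{f^{(\ell)}}(1-\varepsilon)$, rather than at the discontinuities. This gives a genuinely discrete sequence of cut points, and the point is that no control over $\mathcal P^*_f$ on the open pieces $(\beta^{(\ell)},\beta^{(\ell+1)})$ is needed at all. Because $\mathcal P^*_f(t)$ is a decreasing family of nested intervals whose total mass loss over such a piece is at most $\varepsilon$, the Hausdorff distance from $\mathcal P^*_f(\beta^{(\ell)})$ never exceeds $\varepsilon$ on the whole piece, regardless of how many (possibly infinitely many) jumps occur there; the same holds for $f_n$. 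This monotonicity argument is what replaces your proposed compactness/margin argument, and it also avoids the separate difficulty you would face of promoting pointwise to uniform control near the endpoints of a continuity piece, where the ``positive margin'' between the largest and second-largest interval degenerates to zero. If you want to salvage your scheme, you should replace the (dense, uncountable-from-the-point-of-view-of-enumeration) discontinuity set by the mass-reduction times, and replace the Arzel\`a--Ascoli argument on each piece by the nesting bound.
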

\begin{proof}
Let $\varepsilon > 0$ be small. Let $f^{(0)} = f$ and, recursively, 
$f^{(\ell + 1)} = (f^{(\ell)})_{\zeta_{f^{(\ell)}}(1-\varepsilon)}^*$. Define $s^{(\ell)} = \zeta_{f^{(\ell)}}(1-\varepsilon)$ and $\beta^{(\ell)} = \sum_{k=0}^\ell s^{(k)}$. 
Assume that $\|f_n-f\| \to 0$ for a sequence of continuous excursions $f_n, n \geq 1$. Denote by $r_n^{(0)}$ the sequence from Lemma \ref{lem_cont} with $r = \mathbf m_{f}(1-\varepsilon)$. Set $s_n^{(0)} := \zeta_{f_n}(r_n^{(0)})$ and 
$f_n^{(0)} := f_n$.
Then, for $\ell \geq 0$, 
inductively, $f^{(\ell+1)}_n := (f^{(\ell)}_n)_{s_n^{(\ell)}}^*$, where $s_n^{(\ell)} = \zeta_{f_n^{(\ell)}}(r_n^{(\ell)})$ and $r_n^{(\ell)}$ is the sequence from Lemma \ref{lem_cont} based on the functions $f^{(\ell)}, f^{(\ell)}_n, n \geq 1$ and 
$r =\mathbf m_{f^{(\ell)}}(1-\varepsilon)$. Let $\beta_n^{(\ell)} = \sum_{k=0}^\ell s_n^{(k)}$.

By Lemma \ref{lem_cont}, we have $s_n^{(\ell)} \to s^{(\ell)}$, hence $\beta_n^{(\ell)} \to \beta^{(\ell)}$  for all $\ell \geq 0$. Now, fix $K$ large and assume that $n$ is sufficiently large such that 
$s_n^{(\ell)} > 0$ for all $0 \leq \ell \leq K$.  Define $\lambda_n(\beta^{(\ell)}) = \beta_n^{(\ell)}$ for all $1 \leq \ell \leq K$, 
and linear on interval $[\beta^{(\ell)}, \beta^{(\ell+1)}]$, $0 \leq \ell \leq K-1$. Extend $\lambda_n$ to a continuous bijection on $[0,\infty)$ by a straight line of slope one for $t \geq \beta^{(K)}$. 
Clearly, $\lambda_n \to \text{id}$ uniformly on $[0,1]$. 
 Fix $\varepsilon' > 0$. By Lemma \ref{lem_cont}, for all $n$ sufficiently large, for all $1 \leq \ell \leq K$ we have $|d(\mathcal P^*_{f_n} (\beta_n^{(\ell)}), \mathcal P^*_f(\beta^{(\ell)}))| \leq \varepsilon'$ and
 $|d(\mathcal P^*_{f_n} (\beta_n^{(\ell)}-), \mathcal P^*_f(\beta^{(\ell)}-))| \leq \varepsilon'$. Hence, for those $n$,  
 $$\sup_{0 \leq t \leq \beta^{(K)}} |d(\mathcal P^*_{f_n} \circ \lambda_n (t), \mathcal P^*_f(t))| \leq \varepsilon + \varepsilon'.$$
 By construction, for those large $n$,  
$$\sup_{t > \beta^{(K)}} |d(\mathcal P^*_{f_n} \circ \lambda_n (t), \mathcal P^*_f(t))| \leq \omega_f((1-\varepsilon)^K) + \omega_{f_n}((1-\varepsilon)^K + 2 \varepsilon').$$
Since we choose both $\varepsilon, \varepsilon'$ arbitrarily small and $K$ arbitrarily large, this finishes the proof.
  \end{proof}

\begin{prop}
The map $f \mapsto \zeta_f$ is continuous at every $f \in \Cex^*$. (Here, $\zeta_f$ is considered as an element in the space of continuous functions on $[0,1]$ endowed with the supremum norm.) In particular, 
$f \mapsto T_\infty^f$ is continuous at $f \in \Cex^*$.
\end{prop}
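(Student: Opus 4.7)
My approach is to chain the preceding proposition, which gives continuity of $f \mapsto \mathcal{P}^*_f$ at $f \in \Cex^*$, with two further ingredients: (a) the continuity of the Lebesgue-measure functional on $(\mathcal{V},d)$, and (b) the principle that Skorokhod convergence of non-increasing \cadlag\ functions upgrades to uniform convergence of their generalized inverses whenever the limiting inverse is continuous. For (a), $|\lambda(O) - \lambda(O')| \le \lambda(O \Delta O')$, and the paper has already recorded that $d(O_n, O) \to 0$ is equivalent to $\lambda(O_n \Delta O) \to 0$. Since post-composition by a continuous map preserves Skorokhod convergence, the preceding proposition gives continuity of $f \mapsto \mathbf{m}_f = \lambda \circ \mathcal{P}^*_f$ at every $f \in \Cex^*$, as a map from $\Cex$ into $\mathcal{D}_{[0,1]}$.

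For (b), suppose $f_n \to f$ uniformly with $f \in \Cex^*$, and pick time-changes $\lambda_n \to \mathrm{id}$ uniformly with $\mathbf{m}_{f_n} \circ \lambda_n \to \mathbf{m}_f$ uniformly on compacts. Fix $t \in (0, \mathbf{m}_f(0))$ and $\varepsilon > 0$. Right-continuity of $\mathbf{m}_f$ together with the definition $\zeta_f(t) = \inf\{s > 0 : \mathbf{m}_f(s) \le t\}$ give $\mathbf{m}_f(\zeta_f(t) - \eta) > t$ for all $\eta \in (0, \zeta_f(t))$, while continuity of $\zeta_f$ at $t$, an assertion already made in the setup of this appendix, yields $\mathbf{m}_f(\zeta_f(t) + \eta) < t$ for all sufficiently small $\eta > 0$. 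The key move is to select $s^- < \zeta_f(t) < s^+$ within $\varepsilon$ of $\zeta_f(t)$ at continuity points of $\mathbf{m}_f$ that preserve the strict inequalities $\mathbf{m}_f(s^-) > t > \mathbf{m}_f(s^+)$; this is possible because $\mathbf{m}_f$ has only countably many discontinuities. Uniform convergence then forces $\mathbf{m}_{f_n}(\lambda_n(s^-)) > t > \mathbf{m}_{f_n}(\lambda_n(s^+))$ for $n$ large, and monotonicity of $\mathbf{m}_{f_n}$ yields $\lambda_n(s^-) \le \zeta_{f_n}(t) \le \lambda_n(s^+)$. Since $\lambda_n(s^\pm) \to s^\pm$, we conclude $|\zeta_{f_n}(t) - \zeta_f(t)| \le 2\varepsilon$ eventually. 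A Dini-type upgrade, leveraging that each $\zeta_{f_n}$ and $\zeta_f$ is non-increasing, that $\zeta_f$ is continuous, and that $[0,1]$ is compact, then promotes this pointwise convergence to uniform convergence on $[0,1]$, which is exactly the asserted continuity in the supremum norm.

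The \emph{in particular} statement drops out since $T_\infty^f = \zeta_f(0)$: by the heavy-path construction, $\mathcal{P}^*_f(s) = \emptyset$, equivalently $\mathbf{m}_f(s) = 0$, if and only if $s \ge T_\infty^f$, so $\zeta_f(0) = \inf\{s > 0 : \mathbf{m}_f(s) \le 0\} = T_\infty^f$. Uniform convergence $\zeta_{f_n} \to \zeta_f$ then gives continuity of $f \mapsto T_\infty^f$. I expect the main obstacle to be the pointwise-to-uniform upgrade, because $\mathbf{m}_f$ has genuine jumps and one must evaluate the approximating $\mathbf{m}_{f_n}$ only at (or near) continuity points of $\mathbf{m}_f$; what unlocks this argument is precisely the continuity of the limiting inverse $\zeta_f$, without which one would only obtain Skorokhod rather than uniform convergence of the inverses.
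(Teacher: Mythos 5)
Your argument tracks the paper's overall strategy: both proofs feed the preceding proposition (Skorokhod convergence of $\mathcal P^*_{f_n}$ to $\mathcal P^*_f$) into a pointwise convergence of $\zeta_{f_n}(t) \to \zeta_f(t)$ on the open interval, and then attempt to upgrade pointwise to uniform. The difference lies in the upgrade: you invoke a P\'olya/Dini-type theorem for monotone functions, while the paper records the estimate $\omega_{\zeta_f}(\varepsilon) \leq \omega_f(\varepsilon)$, which yields uniform equicontinuity of $(\zeta_{f_n})_n$, so that Arzel\`a--Ascoli together with the pointwise identification on $(0,1)$ forces uniform convergence on all of $[0,1]$.

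The gap is at $t=0$, which is exactly where $T^f_\infty = \zeta_f(0)$ lives. Your pointwise argument begins with ``Fix $t \in (0, \mathbf m_f(0))$'' and genuinely requires $t>0$: the bracketing step $\mathbf m_f(s^+) < t$ has no analogue at $t=0$, since $\mathbf m_f \geq 0$. The P\'olya-type upgrade you then invoke needs pointwise convergence (or at least a one-sided bound) at the endpoint: from monotonicity you get $\liminf_n \zeta_{f_n}(0) \geq \sup_{t>0} \zeta_f(t) = \zeta_f(0)$ for free, but the matching bound $\limsup_n \zeta_{f_n}(0) \leq \zeta_f(0)$ does not follow from pointwise convergence on $(0,1)$ together with monotonicity and continuity of $\zeta_f$. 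A concrete obstruction: $\zeta_f(t)=1-t$, $\zeta_{f_n}(t)=\max(1-t,\,c(1-nt))$ for a fixed $c>1$ gives non-increasing continuous functions converging to $\zeta_f$ pointwise on $(0,1]$, yet $\zeta_{f_n}(0)=c \not\to 1$. So some control near $t=0$ is needed. The paper's $\omega_{\zeta_f}(\varepsilon)\leq\omega_f(\varepsilon)$ supplies it: it gives $\zeta_{f_n}(0) \leq \zeta_{f_n}(\delta) + \omega_{f_n}(\delta)$, and since $\|f_n-f\|\to 0$ the family $(f_n)$ is uniformly equicontinuous, hence $\limsup_n \zeta_{f_n}(0) \leq \zeta_f(\delta) + \omega_f(\delta) \to \zeta_f(0)$ as $\delta \downarrow 0$. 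Adding this one observation repairs your argument (either via Arzel\`a--Ascoli as in the paper, or by feeding the endpoint bound into your P\'olya step). Without it, the ``in particular'' claim about $T^f_\infty$ --- the endpoint value --- is precisely the part left unproved.

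One minor remark: where you write that continuity of $\zeta_f$ yields $\mathbf m_f(\zeta_f(t)+\eta) < t$, the cleanest justification is that continuity of $\zeta_f$ rules out plateaus of $\mathbf m_f$ (a nondegenerate interval on which $\mathbf m_f \equiv t$ would produce a jump of $\zeta_f$ at $t$), so $\mathbf m_f$ is strictly decreasing and one combines this with $\mathbf m_f(\zeta_f(t)) \leq t$. You allude to this but it is worth spelling out, since right-continuity alone does not give the strict inequality.
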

\begin{proof}
First of all, it is easy to see that, for all $\varepsilon > 0$ and $f \in \Cex$, we have $\omega_{\zeta_f}(\varepsilon) \leq \omega_f(\varepsilon)$. Now, suppose that $\|f_n - f\| \to 0$ with $ \in \Cex^*$.
For ease of notation, let us abbreviate $\zeta_n := \zeta_{f_n}, n \geq 1$ and $\zeta = \zeta_f$. Clearly, $\zeta_n(1)=0$ for all $n$ sufficiently large. Hence, by the Arzela-Ascoli theorem, 
$(\zeta_n)$ is relatively compact. It suffices to prove that, for any $t \in (0,1)$, we have $\zeta_n(t) \to \zeta(t)$. 
Assume that $\mathbf m_f$ is continuous at $\zeta_f(t)$. Then $\mathbf m_{f_n}(\zeta_f(t)) \to \mathbf m_f(\zeta_f(t)) = t$. Clearly, for every $\varepsilon > 0$ we can find $t-\varepsilon  < s < t$ such that $\mathbf m_f$ is continuous at $\zeta_f(s)$. In particular, $\mathbf m_{f_n}(\zeta_f(s)) \to \mathbf m_f(\zeta_f(s)) = s$. This implies
$\liminf \zeta_n(t)  \geq \zeta(s)$. By continuity, it follows $\liminf \zeta_n(t)  \geq \zeta(t)$. Similarly, one shows $\limsup \zeta_n(t)  \leq \zeta(t)$.

Now assume that $\mathbf m_f$ is discontinuous at $\zeta_f(t)$. Then, there exist $t' \leq t \leq t''$ with $t' < t''$ such that $\zeta_f$ is constant on $[t', t'']$. For any $\varepsilon > 0$ there exists $t'-\varepsilon < s < t'$ such that $\mathbf m_f(s)$ is continuous at $s$. By the first part, this implies $\zeta_n(s) \to \zeta(s)$. By monotonicity,  $\limsup \zeta_n(t') \leq \zeta(s)$. By continuity, this implies $\limsup \zeta_n(t') \leq \zeta(t')$. Similarly, $\liminf \zeta_n(t'') \geq \zeta(t'')$. Since $\zeta(t') = \zeta(t'')$ this implies $\zeta_n(x) \to \zeta(x)$ for all $x \in [t', t'']$. 
\end{proof}

\textbf{Remark.} It is important to note that neither of the two propositions holds for general $f \in \Cex^{(1)}$ or $f \in \Cex^{(2)}$; both conditions are important.
 \medskip We can now apply the continuous mapping theorem. The following result contains the first statement in Theorem \ref{conv:hp}. Note that the quantity $T_\infty$ in Theorem \ref{conv:hp} equals $T_\infty^{\mathbf e}$ here.

\begin{thm} \label{thm:unif_conv_exc}
Let $\tau_n$ be a critical branching process with finite variance $\sigma^2$. 
\begin{itemize}
\item [(i)]
Let $L_n$ be the length of the corresponding heavy path. Then, in distribution,
$$\frac{L_n}{\sqrt n} \to \frac{2}{\sigma} \cdot T^{\mathbf e}_\infty\ .$$
\item [(ii)] For $k \geq 0$, let $P_n(k)$ be the size of the subtree rooted at the node on level $k$ on the heavy path. In distribution, in the Skorokhod topology on $\mathcal D_{[0, \infty)}$, 
\begin{align} \label{convP} \frac {P_{n}(\lfloor \cdot \ \sqrt{n}\rfloor)}{n} \to \mathbf m_{\frac{2}{\sigma} \mathbf e}\ .\end{align}
\item [(iii)]
 For $0 \leq \ell \leq n$, let $Q_n(\ell) = \inf \{ k \geq 0: P_n(k) \leq \ell \}$. Then, in distribution, on the space of continuous functions on $[0,1]$,
\begin{align} \label{convQ}  \frac{Q_n(\lfloor \cdot  \ n\rfloor) }{\sqrt n} \to \frac{2}{\sigma} \cdot \zeta_{\mathbf e}\ .\end{align}
\end{itemize}
\end{thm}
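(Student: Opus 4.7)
The plan is to reduce all three parts of the theorem to the continuous mapping theorem applied to Aldous's invariance principle \eqref{conv:ex}, using the continuity properties of $f \mapsto T_\infty^f$, $f \mapsto \mathcal{P}_f^*$, and $f \mapsto \zeta_f$ at every $f \in \Cex^*$ established in the preceding propositions. Throughout I write $d_n(t) := D_{t(2n-2)}/\sqrt{n}$, so that \eqref{conv:ex} reads $d_n \stackrel{d}{\longrightarrow} (2/\sigma)\mathbf{e}$ in $\Cex$.

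First, I would build the discrete-to-continuum dictionary. Each connected component of $\mathcal{P}_{d_n}(k/\sqrt{n})$ is the rescaled interval on which the contour function encodes a particular subtree of $\tau_n$ rooted at a node $v$ of depth essentially $k$, and its length equals $2(N(v)-1)/(2n-2)$. Since the selector $\mathfrak{m}(\cdot)$ picks the longest component with smallest left endpoint, which mirrors the preorder tie-breaking used to define the discrete heavy path, a straightforward induction on the level number gives, up to $O(1/n)$ and $O(1/\sqrt{n})$ boundary errors,
\begin{equation*}
L_n = \sqrt{n}\, T_\infty^{d_n}, \qquad \frac{P_n(k)}{n} = \mathbf{m}_{d_n}\!\left(\frac{k}{\sqrt n}\right), \qquad \frac{Q_n(\lfloor \ell n\rfloor)}{\sqrt n} = \zeta_{d_n}(\ell).
\end{equation*}

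Next, I would verify that $\mathbf{e} \in \Cex^*$ almost surely; since both defining conditions are invariant under positive scaling, this also gives $(2/\sigma)\mathbf{e} \in \Cex^*$ almost surely. For $\Cex^{(2)}$, I use that the local minima of $\mathbf{e}$ are almost surely strict and pairwise distinct, and that conditionally on the enclosing excursion interval (taken with rational endpoints) the location of the minimum has an absolutely continuous law by a Williams-type decomposition, so it almost surely avoids the midpoint. For $\Cex^{(1)}$, the analogous argument shows that at every level $x$ the longest excursion of $\mathbf{e}$ above $x$ is almost surely unique, again after reducing to a countable dense set of levels and invoking atomlessness of the relevant joint laws.

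Finally, Skorokhod representation allows me to assume $d_n \to (2/\sigma)\mathbf{e}$ almost surely. The continuity results from the preceding two propositions then give $T_\infty^{d_n} \to (2/\sigma) T_\infty^{\mathbf{e}}$, $\zeta_{d_n} \to (2/\sigma) \zeta_{\mathbf{e}}$ uniformly on $[0,1]$, and $\mathcal{P}^*_{d_n} \to \mathcal{P}^*_{(2/\sigma)\mathbf{e}}$ in the Skorokhod topology on $\mathcal{D}_{\mathcal{V}}$; the last of these descends to $\mathbf{m}_{d_n} \to \mathbf{m}_{(2/\sigma)\mathbf{e}}$ in $\mathcal{D}_{\mathbb{R}}$ because $\lambda$ is continuous on $\mathcal{V}$. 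Combining with the dictionary from the first step yields (i), (iii), and (ii) respectively. The main obstacle is the verification that $\mathbf{e} \in \Cex^*$ a.s.: the two conditions look innocuous but must hold \emph{uniformly} in $x$ and $t$, and converting the countable reductions into genuine almost-sure statements about all levels requires a careful exploitation of excursion theory.
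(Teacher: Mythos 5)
Your proposal matches the paper's own proof: the paper derives the theorem by applying the continuous mapping theorem to Aldous's invariance principle \eqref{conv:ex}, using the already-established continuity of $f\mapsto\mathcal P^*_f$, $f\mapsto\zeta_f$, and $f\mapsto T_\infty^f$ at every $f\in\Cex^*$, together with the almost-sure membership $\mathbf e\in\Cex^*$ and the observation that $T^f_\infty$ equals the heavy-path length when $f$ is a rescaled depth-first-search process. You spell out the discrete-to-continuum dictionary and the Skorokhod-representation step more explicitly than the paper does, but the route and its ingredients are identical.
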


\medskip \noindent \textbf{The heavy path in the Brownian Continuum tree.}
Interval decompositions governed by a Brownian excursion can be studied with the help of self-similar fragmentations introduced by Bertoin \cite{bertoin_self}. 
We recall a version of Definition 2 in this work: a $\mathcal V$-valued process $F(t), t \geq 0$ with c{\`a}dl{\`a}g paths is called \emph{self-similar with index} $\alpha \in \R$, if 
\begin{enumerate} 
\item $F(0) = [0,1]$, $F(t) \subseteq F(s)$ for all $t \geq s \geq 0$; 
\item   $F(t)$ is continuous in probability at every $t \geq 0$; \end{enumerate}
further, given $F(t) = \cup I_j$ for $t \geq 0$ and disjoint open intervals $I_1, \ldots$,
\begin{enumerate} \setcounter{enumi}{2}
\item the processes $(F(t+s) \cap I_j)_{s \geq 0}, j \geq 1$ are stochastically independent;
\item for all $j \geq 1$, $F(t+s) \cap I_j, s \geq 0$ 
is distributed like $F(|I_j|^{\alpha} s), s \geq 0$ rescaled to fit on $I_j$. \end{enumerate}
Bertoin \cite{bertoin_self} observes that $\mathcal P_{\mathbf e}$ is a self-similar fragmentation process with $\alpha = -1/2$. Hence, the same follows for $\mathcal P^*_{\mathbf e}$. 
For $t \geq 0$, let $$\varrho_1(t) = \begin{cases} \inf \left \{u \geq 0: \int_0^u \sqrt{\mathbf m_{\mathbf e}(r)} dr > t \right \}, & \text{if } t < \int_0^\infty \sqrt{\mathbf m_{\mathbf e}(r)} dr \\ \infty & \text{otherwise}.\end{cases}$$

It follows from \cite[Theorem 2]{bertoin_self} that the $\mathcal V$-valued c{\`a}dl{\`a}g process $H(\cdot) := \mathcal P_{\mathbf e}(\varrho_1(\cdot))$ is a \emph{homogeneous} interval fragmentation, that is, a self-similar fragmentation process with index $\alpha = 0$. (Here, and subsequently, we abbreviate $\mathcal P_{\mathbf e}(\infty) = H(\infty) = \emptyset.$)
Homogeneous fragmentation processes were studied in detail in another work of Bertoin \cite{bertoin_hom}. In particular, by exploiting the connection between interval fragmentations and exchangeable partitions of the natural numbers \cite[Lemmas 5 and 6]{bertoin_self}, the arguments in the proof of
Theorem 3 in \cite{bertoin_hom} relying on a Poisson point process construction reveal that $\xi(\cdot) := -\log \lambda(H(\cdot))$ is a subordinator, that is, an increasing non-negative c{\`a}dl{\`a}g process with stationary and independent increments.
By \cite[Theorem 2]{bertoin_hom}, (the distribution) of a homogeneous fragmentation process is characterized by a unique exchangeable partition measure 
 which is determined by an \emph{erosion coefficient} $c \geq 0$ and a \emph{L{\'e}vy measure} $\nu$ on $(0,\infty)$ with the property that $\int_0^\infty \min(x,1) d\nu(x) <  \infty$. We refer to \cite{bertoin_hom} for a detailed discussion of this characterization and only use the following two results: 
first, by the arguments in \cite[Section 4]{bertoin_self}, for $\mathcal P^*_{\mathbf e}$, we have $c = 0$ and $$\nu(dx) = 2(2 \pi x^3(1-x)^3)^ {-1/2} \mathbf 1_{[1/2, 1)}(x) dx.$$ Second, by the arguments in the proof of Theorem 3 in 
\cite{bertoin_hom} the Laplace transform $\E{\exp(-q\xi(t))}, t, q \geq 0$ is given by $\exp(-t \Phi(q))$ with
 \begin{align} \label{def:phi} \Phi(q) = \int_{1/2}^1  (1-x^q)  d \nu (x). \end{align}
Summarizing, we obtain the following result, which is closely related to \cite[Corollary 2]{bertoin_self}.
\begin{prop}
 Let $\xi(t), t \geq 0$ be a subordinator with $\E{\exp(-q \xi(t))} = \exp(-t\Phi(q))$ as in \eqref{def:phi}. For $t \geq 0$, let
$$\varrho_2(t) = \begin{cases} \inf \left \{ u \geq 0 : \int_0^u e^{-\frac 1 2 \xi(r)} dr > t \right \}, & \text{if } t < \int_0^\infty e^{-\frac 1 2 \xi(r)} dr \\ \infty & \text{otherwise}.\end{cases}
$$
Then, $\exp(-\xi(\varrho_2(t))), t \geq 0$ and $\mathbf m_{\mathbf e}(t), t \geq 0$ are identically distributed.
\end{prop}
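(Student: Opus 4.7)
The plan is to sandwich $L_n/\sqrt{n}$ between two random quantities of the form $Z_\rho = \sum_{i=0}^\infty \rho^i Y_i$ for some $0 < \rho < 1$ and i.i.d.\ non-negative $Y_i$, then invoke a general lemma on the near-zero tail of $Z_\rho$. The auxiliary lemma I would first prove reads: if $\Prob{Y_1 \leq x} \geq \varepsilon x^\alpha$ for small $x$, then $\limsup_{x\to 0} -\log\Prob{Z_\rho \leq x}/\log^2 x \leq \alpha/\log(1/\rho)$; and if $\Prob{Y_1 \leq x} \leq \varepsilon x^\alpha$ for small $x$, then $\liminf_{x\to 0} -\log\Prob{Z_\rho \leq x}/\log^2 x \geq \alpha/(2\log(1/\rho))$. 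The upper direction follows by splitting $Z_\rho$ at index $-D\log t$ with $D > 1/\log(1/\rho)$: Chebyshev kills the tail sum, and the head is $\leq t$ with probability at least $\Prob{Y_1 \leq t(1-\rho)}^{-D\log t + 1}$. The lower direction uses $\Prob{Z_\rho \leq t} \leq \prod_{i=0}^{-C\log t}\Prob{Y_1 \leq \rho^{-i}t}$ for any $C < 1/\log(1/\rho)$, with the exponent optimized by $C = 1/\log(1/\rho)$.

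For the upper bound $\limsup \leq 2/\log 2$, equivalently a stochastic upper bound on $T_\infty$, I would fix $0 < \delta < 1/2$ (non-algebraic, so levels $\lfloor (1-\delta)^i n\rfloor$ are unambiguous), set $\sigma_i = \inf\{j \geq 0 : N^*(e_j) \leq (1-\delta)^i n\}$, and observe $L_n = \sigma_{\lceil \log_{1/(1-\delta)} n\rceil} - 1$. The key step is the stochastic domination $\sigma_{j+1} - \sigma_j \leq_{\mathrm{st}} G_{j+1}$, where $G_{j+1}$ is geometric with success probability $\beta_2^*/\sqrt{\delta(1-\delta)^j n}$. This is obtained by iterating Theorem \ref{thm:tailbounds2}: at any node on the heavy path whose subtree size $m$ lies in $((1-\delta)^{j+1}n, (1-\delta)^j n]$, the probability that $N_2 \geq m - (1-\delta)^{j+1}n \geq \delta(1-\delta)^j n$, which forces the heavy-path subtree to drop below $(1-\delta)^{j+1}n$, is at least $\beta_2^*(\delta(1-\delta)^j n)^{-1/2}$. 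A conditioning argument on the Markov chain $N^*(e_k)$ along the heavy path, plus the elementary fact that $G \leq_{\mathrm{st}} 1 + E/p$ for $E$ standard exponential, yields (in the scaling limit) $T_\infty \leq_{\mathrm{st}} \frac{\sigma\sqrt\delta}{2\beta_2^*}\sum_{i=0}^\infty (1-\delta)^{i/2} E_i$ with $E_i$ i.i.d.\ exponentials. Applying part (i) of the lemma with $\rho = \sqrt{1-\delta}$ and $\alpha = 1$ gives $\limsup \leq 2/\log(1/(1-\delta))$; sending $\delta \uparrow 1/2$ completes the bound.

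For the lower bound $\liminf \geq 1/\log 2$ I would stochastically dominate $L_n$ from below. Since $T_\infty$ is distribution-free in the offspring law, I may assume $p_0 = p_2 = 1/2$, so $\sigma = 1$. Fix $a > 2$ non-algebraic, $a' \in (2,a)$, and count $Q_i = |\{j \geq 0 : N^*(e_j) \in (na^{-i}, na^{-i+1}]\}|$ as well as $R_i = \min\{t \geq 1 : \sum_{j=1}^t \beta_2^2 U_{i,j}^{-2} \geq n(m_i - a^{-i})\}$ with $m_i = a^{-i+1}/a'$. The heart of the argument is the conditional stochastic domination $Q_i \geq_{\mathrm{st}} R_i$ given the largest subtree size entering scale $i$: here Theorem \ref{thm:tailbounds} is used in the form $N_2 \leq_{\mathrm{st}} \beta_2^2 U^{-2}$ to upper-bound the per-step drop in subtree size along the heavy path, which forces many steps within each window. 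Iterating over $i$ using the Markov property and combining gives $\sum_{i=1}^k Q_i \geq_{\mathrm{st}} \sum_{i=1}^k R_i$. Since $U^{-2}$ sits in the domain of attraction of the stable$(1/2)$ Lévy law, whose reciprocal square root equals $c^{-1/2}|\mathcal N|$ in distribution, the scaling limit yields $T_\infty \geq_{\mathrm{st}} (c\beta_2^2)^{-1/2}(a/a'-1)^{1/2}\sum_{i=1}^\infty a^{-i/2}|\mathcal N_i|$. Part (ii) of the lemma with $\rho = a^{-1/2}$ and $\alpha = 1$ (from $\Prob{|\mathcal N| \leq x} \leq \sqrt{2/\pi}\,x$) gives $\liminf \geq 1/\log a$, and letting $a \downarrow 2$ concludes.

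The main obstacle will be the two stochastic domination arguments, since they require a delicate Markov-type analysis of the heavy-path evolution. The upper-bound domination uses a tight lower tail on $N_2$ and must be restricted to levels $j \leq \log_{1/(1-\delta)}n - C_2$ to stay within the regime of validity of Theorem \ref{thm:tailbounds2}; the tail of the heavy path beyond this depth is absorbed into a constant. The lower-bound domination is more delicate because we estimate $Q_i$ from below using an upper tail on $N_2$, requiring careful conditioning on the subtree maximum $P_i$ entering the $i$-th scale and a coupling of the successive drops to i.i.d.\ copies of $\beta_2^2 U^{-2}$, so that the Markov structure factors through cleanly when summing over $i$.
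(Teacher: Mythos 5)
Your proposal does not address the statement at hand. What you have written is a proof outline for Proposition \ref{heavyp} (the near-zero tail bounds $\frac{1}{\log 2} \leq \liminf \frac{-\log\Prob{T_\infty \leq x}}{\log^2 x} \leq \limsup \leq \frac{2}{\log 2}$), complete with the perpetuity lemma for $Z_\rho = \sum_i \rho^i Y_i$, the geometric-domination argument for $\sigma_j$, and the $Q_i$ versus $R_i$ coupling. That is the content of Appendix C. The proposition you were asked to prove is a different result from Appendix D: it asserts that the process $\exp(-\xi(\varrho_2(t)))$, $t \geq 0$, built from a subordinator $\xi$ with Laplace exponent $\Phi$ and the Lamperti-type time change $\varrho_2$, has the same law as the heavy-fragment mass process $\mathbf m_{\mathbf e}(t)$, $t \geq 0$, of the Brownian excursion. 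Nothing in your argument touches this identification; in particular, tail estimates on $T_\infty$ cannot yield an identity in law at the level of processes.

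The paper's route, which your proposal would need to follow or replace, goes through Bertoin's fragmentation theory: one first observes that the superlevel-set process $\mathcal P_{\mathbf e}$ is a self-similar interval fragmentation of index $-1/2$, hence so is the tagged (largest-fragment) process $\mathcal P^*_{\mathbf e}$; the time change $\varrho_1$ (integrating $\sqrt{\mathbf m_{\mathbf e}}$) converts it into a homogeneous fragmentation $H$, for which $-\log\lambda(H(\cdot))$ is a subordinator whose Laplace exponent is $\Phi(q) = \int_{1/2}^1 (1-x^q)\,d\nu(x)$ with $\nu$ the dislocation measure computed in \cite{bertoin_self}; the proposition is then the inverse statement, obtained by checking that $\varrho_2$ undoes $\varrho_1$ so that $\exp(-\xi(\varrho_2(\cdot)))$ recovers $\mathbf m_{\mathbf e}$. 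If you wish to prove this proposition, you must engage with the self-similarity of the excursion's level-set decomposition and the Lamperti correspondence between index $-1/2$ and index $0$ fragmentations; the discrete sandwiching of $L_n/\sqrt n$ is of no help here.
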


One can verify that
$$\Phi(q) = \frac{4}{\sqrt{\pi}}  \cdot {}_2F_1\left(-\frac 1 2, \frac 3 2 -q; \frac 1 2; \frac 1 2\right),$$
where ${}_2F_1$ denotes the standard hypergeometric function. In particular, 
$$\Phi \left( \frac 1 2 \right) = 2 \sqrt{\frac{2}{\pi}} \left(\sqrt{2} - \log (1+\sqrt{2}) \right).$$
Using recurrences for the hypergeometric function, one can also check that $\Phi(q) \sqrt{\pi}$ is rational for all $q = (2\ell + 1)/2$, $\ell \in \N, \ell \geq 1$.

As discussed in \cite[Section 4]{bertoin_self}, the work of Carmona, Petit and Yor \cite{carpetyor} yields explicit expressions for the moments of $T^{\mathbf e}_\infty$ which concludes the proof of the Theorem \ref{conv:hp}. See also Theorem 2 in Bertoin and Yor \cite{bertyor}.
\begin{prop}
$T^{\mathbf e}_\infty$ is distributed like $\int_0^\infty \exp(-\frac 1 2 \xi(t)) dt$. For $k \geq 1$, 
$$\E{(T^{\mathbf e}_\infty)^k} = \frac{k!}{\Phi(\frac 1 2) \cdots \Phi(\frac k 2)}.$$
\end{prop}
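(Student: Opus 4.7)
The plan has two steps. First, I extract the distributional identity directly from the preceding proposition. Second, I compute the positive integer moments of $J := \int_0^\infty e^{-\xi(t)/2}\, dt$ by a Fubini calculation that uses only the independent, stationary increments of $\xi$.

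For the first step, the process $\mathbf{m}_{\mathbf e}(t) = \lambda(\mathcal P^*_{\mathbf e}(t))$ is non-increasing with $\mathbf{m}_{\mathbf e}(0) = 1$, and by construction of $\mathcal P^*$ it is strictly positive on $[0, T^{\mathbf e}_\infty)$ (each $\mathcal P^*_{\mathbf e}(t)$ in that range is a non-degenerate open interval) and vanishes on $[T^{\mathbf e}_\infty, \infty)$. Consequently $T^{\mathbf e}_\infty = \inf\{t \geq 0 : \mathbf{m}_{\mathbf e}(t) = 0\}$, a measurable functional of the path. By the preceding proposition this hitting time has the same law as the hitting time of $0$ for $t \mapsto \exp(-\xi(\varrho_2(t)))$. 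The latter vanishes if and only if $\varrho_2(t) = \infty$, and by the definition of $\varrho_2$ the infimum of such $t$ equals $\int_0^\infty e^{-\xi(r)/2}\, dr$ almost surely. This quantity is finite because $\Phi(q) > 0$ for every $q > 0$ forces $\xi(r) \to \infty$. Hence $T^{\mathbf e}_\infty \stackrel{d}{=} J$.

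For the moment identity, Fubini's theorem (applied to a non-negative integrand) gives
\begin{equation*}
\E{J^k} = k!\int_{0 < t_1 < \cdots < t_k} \E{\exp\!\left(-\tfrac{1}{2}\sum_{j=1}^k \xi(t_j)\right)} dt_1 \cdots dt_k.
\end{equation*}
For $0 = t_0 < t_1 < \cdots < t_k$, regrouping yields $\sum_{j=1}^k \xi(t_j) = \sum_{i=1}^k (k-i+1)(\xi(t_i) - \xi(t_{i-1}))$, and since the increments $\xi(t_i) - \xi(t_{i-1})$ are independent with $\E{e^{-q(\xi(t_i) - \xi(t_{i-1}))}} = \exp(-(t_i - t_{i-1})\Phi(q))$, the inner expectation factors as $\prod_{i=1}^k \exp(-(t_i - t_{i-1})\Phi((k-i+1)/2))$. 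Changing variables to the increments $s_i := t_i - t_{i-1} > 0$ turns the $k$-fold integral into a product, yielding
\begin{equation*}
\E{J^k} = k!\prod_{i=1}^k \frac{1}{\Phi((k-i+1)/2)} = \frac{k!}{\Phi(1/2)\,\Phi(1) \cdots \Phi(k/2)}.
\end{equation*}

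The only real subtlety lies in the first step: recovering $T^{\mathbf e}_\infty$ from $\mathbf{m}_{\mathbf e}$ as its first zero. Both alternatives of the dichotomy following the construction of $\mathcal P^*$ (continuous approach to $0$ versus a single jump to $0$) are handled uniformly by taking the infimum, so this is a genuine measurable function of the path. The moment formula is essentially the classical Carmona--Petit--Yor identity for exponential functionals of subordinators, specialized here to the subordinator $\xi/2$ (of Laplace exponent $q \mapsto \Phi(q/2)$), and it is short enough to verify directly as above.
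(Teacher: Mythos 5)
Your proposal is correct and follows essentially the same route as the paper: the distributional identity is read off from the preceding proposition by identifying $T^{\mathbf e}_\infty$ as the first zero of $\mathbf m_{\mathbf e}$ (equivalently, of $\exp(-\xi(\varrho_2(\cdot)))$, whose first zero is $\int_0^\infty e^{-\xi(r)/2}\,dr$ by the definition of $\varrho_2$), and the moment formula is exactly the Carmona--Petit--Yor identity, which the paper simply cites while you carry out the standard Fubini/independent-increments computation in full. Your verification that $\sum_{j=1}^k \xi(t_j)=\sum_{i=1}^k(k-i+1)(\xi(t_i)-\xi(t_{i-1}))$ and the resulting factorization are correct, and the finiteness of all the integrals is guaranteed by $\Phi(q)>0$ for $q>0$, as you note.
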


\medskip \noindent \textbf{A family of perpetuities.}
Let $0 < r < 1$. The dynamics of $\mathbf m_{\mathbf e}(t), t  \geq 0$ implies that
\begin{align} \label{perp} T^{\mathbf e}_\infty \stackrel{d}{=} \zeta_{\mathbf e}(r) + \sqrt{\mathbf m_{\mathbf e}(r)} T^{\mathbf e^*}_\infty,\end{align}
where $\mathbf e^*$ is an independent copy of $\mathbf e$. 
In particular, $T^{\mathbf e^*}_\infty, (\zeta_{\mathbf e}(r), \mathbf m_{\mathbf e}(r))$ are independent while 
$\zeta_{\mathbf e}(r), \mathbf m_{\mathbf e}(r)$ are defined using the \emph{same} Brownian excursion $\mathbf e$. 
Hence, $T^{\mathbf e}_\infty$ is characterized by a family of perpetuities, one for each value of $r$. For more background on stochastic fixed-point equations of perpetuity type and a proof for the fact that \eqref{perp} indeed determines the distribution of $T^{\mathbf e}_\infty$, we refer to Vervaat \cite{Vervaat1979a}.
For all $0 < r < 1$, stochastically, 
\begin{align} \label{boundperp} \sum_{k=0}^\infty \left( \frac{r}{2}\right) ^{k/2} \zeta_{\mathbf e}^{(k)}(r) \leq T^{\mathbf e}_\infty \leq \sum_{k=0}^\infty r^{k/2} \zeta_{\mathbf e}^{(k)}(r),\end{align}
where $\zeta_{\mathbf e}^{(0)}(r), \zeta_{\mathbf e}^{(1)}(r),  \ldots$ are independent copies of $\zeta_{\mathbf e}(r)$. 
Similarly, in the proof of Proposition \ref{heavyp}, we have shown that there exists a constant $C > 0$ and, for all $a > 2$ a constant $c > 0$ such that, stochastically,
\begin{align} \label{bounddis} c \sum_{k=1}^\infty a^{-k/2} |\mathcal N_k| \leq T^{\mathbf e}_\infty \leq C \sum_{k=0}^\infty 2^{-k/2} E_k, \end{align}
 where $\mathcal N_1, \mathcal N_2$ are independent standard normal random variables and $E_1, E_2, \ldots,$ are independent random variables with the standard exponential distribution. In fact, our proofs also revealed that, with the same constants $c, C, a$, in probability, 
 \begin{align} \label{boundcont} c  a^{-1/2} |\mathcal N_1| \leq \zeta_{\mathbf e}(r) \leq C  2^{-1/2} E_1. \end{align}
Note that the lower bound in \eqref{bounddis} does not follow from \eqref{boundperp} and \eqref{bounddis} due to the factor $1/2$ in \eqref{boundperp}. Hence, the tail bound deduced from the discrete-time approach is stronger than the bound we could show relying only on the perpetuity \eqref{perp}.

\end{document}